\newif\ifMariaHasTheToken
\newcommand{\MariaIsEditing}[1]{\ifMariaHasTheToken #1\fi}
\newtheorem{question}{Question}[section]
\newtheorem{lemma}[question]{Lemma}
\newtheorem{theorem}[question]{Theorem}
\newtheorem{conjecture}[question]{Conjecture}
\newtheorem{corollary}[question]{Corollary}
\newcounter{tbox}
\newcommand{\sta}[1]{\vspace*{0.3cm}\refstepcounter{tbox}\noindent{ \parbox{\textwidth}{(\thetbox) \emph{#1}}}\vspace*{0.3cm}}
\tikzstyle{box}=[shape=rectangle, text height=1.5ex, text depth=0.25ex, yshift=0.5mm, fill=white, draw=black, minimum height=5mm, yshift=-0.5mm, minimum width=5mm, font={\small}]
\tikzstyle{gate}=[shape=rectangle, text height=1.5ex, text depth=0.25ex, yshift=0.5mm, fill=white, draw=black, minimum height=5mm, yshift=-0.5mm, minimum width=5mm, font={\small}, tikzit category=circuit]
\tikzstyle{big gate}=[shape=rectangle, text height=1.5ex, text depth=0.25ex, yshift=0.5mm, fill=white, draw=black, minimum height=10mm, yshift=-0.5mm, minimum width=5mm, font={\small}, tikzit category=circuit]
\tikzstyle{Z dot}=[inner sep=0mm, minimum size=2mm, shape=circle, draw=black, fill={rgb,255: red,221; green,255; blue,221}, tikzit category=zx]
\tikzstyle{Z phase dot}=[minimum size=5mm, font={\footnotesize\boldmath}, shape=rectangle, rounded corners=2mm, inner sep=0.2mm, outer sep=-2mm, scale=0.8, tikzit shape=circle, draw=black, fill={rgb,255: red,221; green,255; blue,221}, tikzit draw=blue, tikzit category=zx]
\tikzstyle{X dot}=[Z dot, shape=circle, draw=black, fill={rgb,255: red,255; green,136; blue,136}, tikzit category=zx]
\tikzstyle{X phase dot}=[Z phase dot, tikzit shape=circle, tikzit draw=blue, fill={rgb,255: red,255; green,136; blue,136}, font={\footnotesize\boldmath}, tikzit category=zx]
\tikzstyle{hadamard}=[fill=yellow, draw=black, shape=rectangle, inner sep=0.6mm, minimum height=1.5mm, minimum width=1.5mm, tikzit category=zx]
\tikzstyle{paulibox}=[fill={rgb,255: red,221; green,221; blue,255}, draw=black, shape=rectangle, inner sep=0.6mm, minimum height=5mm, minimum width=5mm, font={\footnotesize}, text height=1.5ex, text depth=0.25ex, tikzit category=zx]
\tikzstyle{vertex}=[inner sep=0mm, minimum size=1mm, shape=circle, draw=black, fill=black, tikzit category=misc]
\tikzstyle{vertex set}=[inner sep=0mm, minimum size=2mm, shape=circle, draw=black, fill=white, font={\footnotesize\boldmath}, tikzit category=misc]
\tikzstyle{small black dot}=[fill=black, draw=black, shape=circle, inner sep=0pt, minimum width=1.2mm, tikzit category=circuit]
\tikzstyle{cnot ctrl}=[fill=black, draw=black, shape=circle, inner sep=0pt, minimum width=1.2mm, tikzit category=circuit]
\tikzstyle{cnot targ}=[fill=white, draw=white, shape=circle, tikzit category=circuit, label={center:$\oplus$}, inner sep=0pt, minimum width=2.1mm, tikzit fill={rgb,255: red,102; green,204; blue,255}, tikzit draw=black]
\tikzstyle{ket}=[fill=white, draw=black, shape=regular polygon, regular polygon sides=3, regular polygon rotate=-30, scale=0.7, inner sep=1pt, tikzit category=circuit, tikzit shape=rectangle, tikzit fill=green]
\tikzstyle{bra}=[fill=white, draw=black, shape=regular polygon, regular polygon sides=3, regular polygon rotate=30, scale=0.7, inner sep=1pt, tikzit category=circuit, tikzit shape=rectangle, tikzit fill=red]
\tikzstyle{scalar}=[shape=rectangle, text height=1.5ex, text depth=0.25ex, yshift=0.5mm, fill=white, draw=black, minimum height=5mm, yshift=-0.5mm, minimum width=5mm, font={\small}]
\tikzstyle{clabel}=[fill=white, draw=none, shape=rectangle, tikzit fill={rgb,255: red,56; green,255; blue,242}, font={\footnotesize}, inner sep=1pt, tikzit category=labels]
\tikzstyle{empty diagram}=[draw={gray!40!white}, dashed, shape=rectangle, minimum width=1cm, minimum height=1cm, tikzit category=misc]
\tikzstyle{white dot}=[Z dot]
\tikzstyle{gray dot}=[X dot]
\tikzstyle{white phase dot}=[Z phase dot]
\tikzstyle{gray phase dot}=[X phase dot]
\tikzstyle{small hadamard}=[hadamard]
\tikzstyle{simple}=[-]
\tikzstyle{hadamard edge}=[-, dashed, dash pattern=on 2pt off 0.5pt, thick, draw={rgb,255: red,68; green,136; blue,255}]
\tikzstyle{box edge}=[-, dashed, dash pattern=on 2pt off 0.5pt, thick, draw={rgb,255: red,203; green,192; blue,225}]
\tikzstyle{brace edge}=[-, tikzit draw=blue, decorate, decoration={brace,amplitude=1mm,raise=-1mm}]
\tikzstyle{diredge}=[->]
\tikzstyle{double edge}=[-, double, shorten <=-1mm, shorten >=-1mm, double distance=2pt]
\tikzstyle{gray edge}=[-, {gray!60!white}]
\tikzstyle{pointer edge}=[->, very thick, gray]
\tikzstyle{boldedge}=[-, line width=1.6pt, shorten <=-0.17mm, shorten >=-0.17mm]
\tikzstyle{implies}=[-implies, double, double distance=2pt]
\newcommand{\leqnomode}{\tagsleft@true}
\newcommand{\reqnomode}{\tagsleft@false}
\def\dd{\hbox{-}}
\newcommand{\E}[1]{\mathbb{E}\left[#1\right]}
\newcommand{\proba}[1]{\mathbb{P}\left(#1\right)}
\newcommand{\set}[1]{\left\{#1\right\}}
\newcommand{\sm}{\setminus}
\newcommand{\mt}{\emptyset}
\newcommand{\nat}{{\mathbb N}}
\newcommand{\NN}{\mathbb{N}}
\theoremstyle{definition}
\tikzset{every picture/.style={line width=0.75pt}} 
\title{Induced minors and subpolynomial treewidth}
\author{Maria Chudnovsky$^{\dagger\mathsection}$}
\author{Julien Codsi$^{\ast \mathsection}$}
\author{David Fischer$^{\mathsection}$}
\author{Daniel Lokshtanov$^{\ddagger}$}
\thanks{$^{\ddagger}$ Department of Computer Science, University of California Santa Barbara, Santa Barbara, CA, USA. Supported by NSF Grant  CCF-2505099.}
\thanks{$^{\dagger}$ Supported by NSF Grants DMS-2348219 and   CCF-2505100, 
AFOSR grant FA9550-25-1-0275, and a Guggenheim Fellowship.}
\thanks{$^{\mathsection}$Princeton University, Princeton, NJ, USA}
\thanks{$^{\ast}$ Supported by NSF Grant DMS-2348219 and by the Fonds de recherche du Québec through the doctoral research scholarship 321124}
\begin{document}

\begin{abstract}
Given a family $\mathcal{H}$ of graphs, we say that a graph $G$ is $\mathcal{H}$-induced-minor-free if no induced minor of $G$ is isomorphic to a member of $\mathcal{H}$,
We denote by $W_{t\times t}$  the $t$-by-$t$ hexagonal grid, and by $K_{t,t}$ the complete bipartite graph with both sides of the bipartition of size $t$.  We show that the class of 
$\{K_{t,t},W_{t\times t}\}$-induced minor-free graphs with bounded clique number has subpolynomial  treewidth.
Specifically, we prove that for every integer $t$ there exist $\epsilon \in (0,1]$ 
and $c \in \mathbb{N}$ such that every $n$-vertex $\{K_{t,t},W_{t\times t}\}$-induced minor-free graph with no clique of size $t$  has treewidth at most $2^{c\log^{1-\epsilon}n}$. 
\end{abstract}

\maketitle
\MariaIsEditing{\textcolor{red}{DO NOT MODIFY! Maria has the token!}}

\section{Introduction}

All graphs in this paper are finite and simple, and all logarithms are base $2$.
For standard graph theory terminology that is not defined here we refer to reader to 
\cite{diestel}.
Let $G = (V(G),E(G))$ be a graph. For a set $X \subseteq V(G),$ we denote by $G[X]$ the subgraph of $G$ induced by $X$, and by $G \setminus X$ the subgraph of $G$ induced by $V(G) \setminus X$. In this paper, we use induced subgraphs and their vertex sets interchangeably. For subsets $X,Y \subseteq V(G)$ we say that $X$ is {\em complete} to $Y$ if $X$ and $Y$ are disjoint and every vertex of $X$ is adjacent to every vertex of $Y$, and that $X$ is {\em anticomplete} to $Y$
if $X$ and $Y$ are disjoint and every vertex of $X$ is non-adjacent to every vertex of $Y$.

For graphs $G$ and $H$, we say that $H$ is an {\em induced minor} of $G$ if
there exist disjoint connected induced subgraphs $\{X_v\}_{v \in V(H)}$ of $G$ 
such that $X_u$ is anticomplete to $X_v$ if and only if $u$ is non-adjacent to $v$ in $H$; in this case we say that $G$ {\em contains an $H$-induced-minor}.
Given a family $\mathcal{H}$ of graphs, we say that a graph $G$ is {\em $\mathcal{H}$-induced-minor-free} if no induced minor of $G$ is isomorphic to a member of $\mathcal{H}$.

For a graph $G$, a \emph{tree decomposition} $(T, \chi)$ of $G$ consists of a tree $T$ and a map $\chi\colon V(T) \to 2^{V(G)}$ with the following properties:
\begin{enumerate}
\itemsep -.2em
    \item For every $v \in V(G)$, there exists $t \in V(T)$ such that $v \in \chi(t)$.

    \item For every $v_1v_2 \in E(G)$, there exists $t \in V(T)$ such that $v_1, v_2 \in \chi(t)$.

    \item For every $v \in V(G)$, the subgraph of $T$ induced by $\{t \in V(T) \mid v \in \chi(t)\}$ is connected.
\end{enumerate}

For each $t\in V(T)$, we refer to $\chi(t)$ as a \textit{bag of} $(T, \chi)$.  The \emph{width} of a tree decomposition $(T, \chi)$, denoted by $width(T, \chi)$, is $\max_{t \in V(T)} |\chi(t)|-1$. The \emph{treewidth} of $G$, denoted by $tw(G)$, is the minimum width of a tree decomposition of $G$.  Graphs of bounded treewidth  are well-understood both  structurally
\cite{RS-GMXVI} and algorithmically \cite{Bodlaender1988DynamicTreewidth}.

Let $f: \mathbb{N} \rightarrow \mathbb{N}$ be a function.
A class $\mathcal{C}$  of graphs has {\em treewidth bounded by $f$}  if 
every $n$-vertex graph in $\mathcal{C}$ has treewidth at most $f(n)$.
If $f$ can be taken to be a constant function, $\mathcal{C}$ is said to have {\em bounded treewidth}. The question of which classes defined by forbidden induced subgraphs or minors have bounded treewidth has received  significant  attention in recent years, including \cite{BHKM}, \cite{Hajebi}, \cite{Korhonen}, \cite{LOZIN2022103517}, a series of  papers involving some of the  authors of this manuscript, and others. However, a recent result of \cite{ABTV} suggests that this question is unlikely to have a nice  answer.  On the other hand, classes whose treewidth is bounded by a slow-growing function seem to be better behaved, and are still of interest from the algorithmic perspective.

A whole host of graph problems~\cite{cygan2015parameterized} (including {\sc Maximum Weight Independent Set}, {\sc Coloring}, {\sc Feedback Vertex Set}, {\sc Dominating Set}, etc.) admit algorithms with running time  $2^{{(tw(G))}^{O(1)}}n^{O(1)}$. These problems can be solved in quasi-polynomial time on graph classes whose treewidth is bounded by a poly-logarithmic function, and in sub-exponential time (in the $O(2^{n^\epsilon})$ for every $\epsilon > 0$ sense) on graph classes whose treewidth is bounded by  a subpolynomial function. Hence, whenever a graph class has treewidth upper-bounded by a poly-logarthmic (sub-polynomial) function, {\em none} of these problems can be NP-hard on this class unless every problem in \textsf{NP} can be solved in quasi-polynomial (or sub-exponential) time. 


%

A {\em clique} in a graph is a set of pairwise adjacent vertices, and 
a  {\em stable (or independent) set} is a set of pairwise non-adjacent vertices.
We denote by $W_{t\times t}$  the $t$-by-$t$ hexagonal grid, and by $K_{t,t}$ the complete bipartite graph with both sides of the bipartition of size $t$. 
For a positive integer $t$, we  denote by    $\mathcal{C}_t$  the class of $\{K_{t,t}, W_{t \times t}\}$-induced-minor-free graphs, and by $\mathcal{C}_t^*$ be the subclass of $\mathcal{C}_t$ consisting of all graphs with no clique of size $t$.
The following conjecture has become known in the area:
\begin{conjecture} [from \cite{TIV}]
    \label{conj:smalltreealph}
    For every $t \in \nat$, there is an integer $d=d(t)$ such  that every $n$-vertex graph  $G \in \mathcal{C}_t^*$ satisfies 
    $tw(G) \leq \log^d n$.
\end{conjecture}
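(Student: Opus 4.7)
The plan is to attack the conjecture via the standard balanced-separator paradigm. I would aim to show that every $n$-vertex $G \in \Cc_t^*$ admits a set $S \subseteq V(G)$ with $|S| \leq \log^{d'(t)} n$ whose removal splits $G$ into components of size at most $|V(G)|/2$. Iterating this decomposition over the resulting subproblems produces a tree decomposition of width $O(\log^{d'(t)+1} n)$, which yields the conjectured polylogarithmic bound (up to an extra logarithmic factor that can be absorbed into $d$).

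To construct such a separator, I would argue by contradiction. Assume $G$ admits no balanced separator of polylogarithmic size. A Menger-style argument applied between two well-chosen ``sides'' of a putative separator then forces a large family of pairwise internally disjoint paths; by iterative Ramsey-style pruning I would extract a highly organized substructure, typically a long induced path $P$ with many pairwise disjoint connected ``attachments'' each meeting $P$ in a controlled way, or a ``comb'' of several parallel such paths. The target is a skeleton rich enough to realize either a clique of size $t$, an induced $K_{t,t}$-minor, or an induced $W_{t \times t}$-minor in $G$, contradicting membership in $\Cc_t^*$.

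The heart of the argument is to upgrade this skeleton into one of the three forbidden structures. The exclusion of $K_{t,t}$ as an induced minor tightly controls bipartite attachment patterns between candidate branch sets, while the bounded clique number prevents local density from disguising a long induced subpath inside a dense blob. An iterated extremal argument should then produce the hexagonal grid $W_{t\times t}$ as an induced minor: at each of $t$ stages one constructs an additional ``row'' of the grid at the cost of losing a $\poly(t)\cdot\log^{O(1)} n$ factor of the structure, so that after $t$ rounds enough structure remains to close off the final row provided the initial skeleton was of polylogarithmic size in $n$.

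The main obstacle is precisely this accounting. Each extraction step must preserve the induced-minor model — both adjacencies \emph{and} non-adjacencies between branch sets — and the standard Ramsey-type tools for such control typically burn a polynomial (not polylogarithmic) factor of vertices per level, which is why treewidth bounds for such classes have so far only reached the subpolynomial regime. Closing the gap to the polylogarithmic bound of \cref{conj:smalltreealph} appears to require a genuinely new ingredient: either a global argument that produces the $W_{t\times t}$-model in one step rather than row by row, or a substantially tighter Ramsey-type theorem tailored to the induced-minor setting in which the forbidden $K_{t,t}$ can be leveraged to suppress non-edges between branch sets without paying a polynomial toll. I expect this to be the main conceptual hurdle and the reason the conjecture remains open in its sharpest form.
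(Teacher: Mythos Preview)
The statement you are addressing is a \emph{conjecture}; the paper does not prove it. What the paper proves is the strictly weaker \cref{thm:twbound} (a subpolynomial bound $2^{c\log^{1-\epsilon}n}$), and it explicitly presents \cref{conj:smalltreealph} as open. So there is no ``paper's own proof'' to compare against, and your final paragraph is exactly right: the polylogarithmic bound is not known, and the obstruction you name --- that Ramsey-type cleanup steps for induced-minor models cost a polynomial rather than polylogarithmic factor per round --- is the genuine gap.

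That said, your sketch diverges substantially from how the paper attacks even the weaker subpolynomial statement, and it is worth noting the difference. You propose a single global contradiction: no small balanced separator $\Rightarrow$ many disjoint paths $\Rightarrow$ Ramsey-extract a grid or $K_{t,t}$ induced-minor model directly. The paper does not try to build $W_{t\times t}$ or $K_{t,t}$ row by row at all. Instead it (i) uses a bounded star-chromatic number to reduce to ``$(F,r)$-based'' graphs, where a small number of stars dominate each bag of some tree decomposition; (ii) shows $\mathcal{C}_t$ is $(p,q)$-\emph{slim}, meaning every $p$-element stable set contains a pair with few pairwise anticomplete paths between them; (iii) proves that slim pairs can be separated cheaply via a recursive ``barrier'' construction, whose key quantitative input is a bipartite edge-counting lemma for $K_{t,t}$-induced-minor-free graphs; and (iv) assembles these small pairwise separators into a balanced separator. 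The losses that keep the paper at $2^{c\log^{1-\epsilon}n}$ rather than $\log^d n$ come from the recursion depth in step (iii) and from iterating over the star-coloring in step (i), not from a per-row Ramsey extraction as in your outline. So while you have correctly diagnosed that something polynomial-per-step blocks the conjecture, the actual mechanism in the paper's subpolynomial proof is quite different from the direct model-building approach you describe.
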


Here, we prove a weakening of this, replacing the poly-logarithmic bound on treewidth 
by a subpolynomial one:
\begin{restatable}{theorem}{main}\label{thm:twbound}
  For every  $t \in \mathbb{N}$, 
  there exist $\epsilon=\epsilon(t) \in (0,1]$ and  $c=c(t)\in \nat$ 
  such that every $n$-vertex 
  graph $G$ in $\mathcal{C}_t^*$ satisfies
$tw(G) \leq   2^{c\log^{1-\epsilon} n }$.
\end{restatable}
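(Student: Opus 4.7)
The plan is to derive the treewidth bound from a \emph{balanced separator theorem} for $\mathcal{C}_t^*$: every $n$-vertex $G \in \mathcal{C}_t^*$ admits a set $S \subseteq V(G)$ of size at most $2^{c'\log^{1-\epsilon} n}$ such that every component of $G \setminus S$ has at most $2n/3$ vertices. Since both the exclusion of $\{K_{t,t},W_{t\times t}\}$ as induced minors and the no-$K_t$ condition are inherited by induced subgraphs, $\mathcal{C}_t^*$ is closed under induced subgraphs, so one may recurse into each component of $G \setminus S$. The standard construction, placing $S$ in the root bag and attaching recursively obtained decompositions of each component, then yields a tree decomposition of width at most $O(\log n) \cdot 2^{c'\log^{1-\epsilon} n} = 2^{c\log^{1-\epsilon} n}$ after absorbing constants, which is the desired bound.

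To produce the balanced separator, I would BFS from an arbitrary vertex $v \in V(G)$, obtaining layers $L_0, L_1, \ldots, L_k$, and split on the diameter $k$. If $k$ is large, say at least $2^{\log^{1-\epsilon} n}$, then by pigeonhole some layer $L_i$ has size at most $n / 2^{\log^{1-\epsilon} n}$, and a short union of consecutive layers around $L_i$ provides a balanced separator in the standard way. If $k$ is small, then some layer $L_i$ has size a significant fraction of $n$; here $G[L_i] \in \mathcal{C}_t^*$ and has strictly fewer than $n$ vertices, so an induction yields a balanced separator inside $G[L_i]$, which is upgraded to a balanced separator of $G$ by adjoining $L_{i-1}$ (or another carefully chosen adjacent slice) of comparable size.

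The structural heart of the proof is a lemma preventing configurations that obstruct both cases simultaneously. If the BFS from $v$ exhibited both many layers and many wide layers, I would construct a $W_{t\times t}$-induced-minor, contradicting the hypothesis. The idea is to choose $t$ vertices in a wide layer that are pairwise far apart, pull back $t$ vertex-disjoint geodesic paths back to $v$, and then use the exclusion of $K_{t,t}$ as an induced minor together with the no-$K_t$ assumption to control adjacencies between these paths, forming the branch sets of a hexagonal grid. The clique hypothesis is essential here: without it, the bipartite-like interactions between paths could collapse inside a single clique and offer shortcuts that avoid the grid structure.

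The main obstacle is getting a \emph{subpolynomial} separator bound rather than the polynomial one that a direct BFS argument would yield. A one-shot BFS layering typically produces separators of size $n^{1-\delta}$, giving only a polynomial treewidth bound. To reach $2^{c \log^{1-\epsilon} n}$, the separator construction must be iterated inside each large layer, producing a nested recursion whose exponent of $\log n$ decreases at each level; this requires carefully tracking how the BFS parameters improve under the structural lemma so that the product of the per-level losses remains subpolynomial. I expect setting up this recursion, and verifying that the combinatorial losses in building the hexagonal grid induced minor do not compound badly enough to destroy the subpolynomial rate, to be where the bulk of the technical difficulty lies.
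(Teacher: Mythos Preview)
Your approach is fundamentally different from the paper's and, as written, has real gaps.

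\textbf{Where your BFS plan breaks.} In your large-diameter case you say pigeonhole gives a layer of size at most $n/2^{\log^{1-\epsilon}n}$. But that quantity is $n^{1-o(1)}$, not $2^{O(\log^{1-\epsilon}n)}$; to get a subpolynomial layer by pigeonhole you would need the diameter to be at least $n/2^{O(\log^{1-\epsilon}n)}$, which is almost linear and far stronger than what you assumed. Even granting a small layer, a BFS layer is a separator but not a \emph{balanced} one, and ``a short union of consecutive layers around $L_i$'' does not fix this without further argument. In the small-diameter case, a balanced separator of $G[L_i]$ together with $L_{i-1}$ is not a balanced separator of $G$: the vertices outside $L_{i-1}\cup L_i$ may all lie on one side. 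Finally, your structural lemma (``many wide BFS layers force a $W_{t\times t}$ induced minor'') is the crux, and the sketch---pull back $t$ geodesics, control adjacencies via the $K_{t,t}$ and $K_t$ exclusions, read off a grid---is not close to a proof; geodesics to a common root need not be anticomplete, and the $K_{t,t}$-induced-minor exclusion gives you degeneracy-type information, not the kind of planarity that would let you weave a hex grid from BFS paths.

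\textbf{What the paper actually does.} There is no BFS. The argument has two layers. First, using a star-coloring bound for $\mathcal{C}_t^*$ (from \cite{BHKM}), the edge set is partitioned into boundedly many star forests, and an induction on the number of ``active'' star forests reduces to bounding the treewidth of \emph{$(F,r)$-based} graphs in $\mathcal{C}_t$: graphs that, and all of whose induced subgraphs, admit a tree decomposition whose bags are unions of at most $r$ stars of a fixed induced star forest $F$. Second, for $(F,r)$-based $G\in\mathcal{C}_t$ one shows small balanced separators as follows. Using results of \cite{TW16,TW17}, the class $\mathcal{C}_t$ is \emph{$(p,q)$-slim}: in any stable $p$-set some pair $(a,b)$ has no $q$ pairwise anticomplete $a\dd b$-paths. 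The $(F,r)$-based structure lets one peel off $p$ pairwise anticomplete ``cores'' $Y_1,\dots,Y_p$ whose closed neighbourhoods are successive balanced separators; slimness then guarantees a $q$-slim pair among representatives $v_i\in Y_i$, and the heart of the paper is a recursive barrier construction (\cref{sec:sepbarred,sec:getbarrier,sec:based}) that separates any $q$-slim pair by a set of size $2^{O(\sqrt{\log n \log r})}$. The bipartite lemma (\cref{bipartite lemma}) is what makes the barrier small. Collecting these pairwise separators with the cores yields a balanced separator and hence the treewidth bound via \cref{thm:septotw}.

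So the missing idea in your proposal is precisely the slim-pair/barrier machinery; a BFS layering by itself does not see the induced-minor hypotheses finely enough to produce subpolynomial separators.
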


We remark that in view of Lemma 3.6 of \cite{aboulker} and the main theorem of \cite{TW16},
\cref{thm:twbound} can be restated in the language of forbidden induced subgraphs instead of induced minors, but we will not do it here.

\cref{thm:twbound} has the following corollary:
\begin{corollary}\label{subpoly}
For every $t \in \mathbb{N}$ and for every $\epsilon>0$ there exists
$N \in \mathbb{N}$ such that every $n$-vertex graph $G \in \mathcal{C}_t^*$ with $n>N$ satisfies
$tw(G) < n^{\epsilon}$.
\end{corollary}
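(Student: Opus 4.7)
The plan is to derive \cref{subpoly} directly from \cref{thm:twbound} via an elementary comparison between subpolynomial and polynomial growth. Fix $t \in \mathbb{N}$ and $\eps > 0$, and let $\epsilon_0 = \epsilon(t) \in (0,1]$ and $c = c(t) \in \nat$ be the constants furnished by \cref{thm:twbound}, so that every $n$-vertex graph $G \in \mathcal{C}_t^*$ satisfies
\[
tw(G) \leq 2^{c \log^{1-\epsilon_0} n}.
\]
It therefore suffices to produce a threshold $N$ so that $2^{c \log^{1-\epsilon_0} n} < n^{\eps}$ whenever $n > N$, since this inequality combined with the above bound yields exactly the conclusion $tw(G) < n^{\eps}$.

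To carry this out, I would pass to logarithms: the desired inequality $2^{c \log^{1-\epsilon_0} n} < 2^{\eps \log n}$ is equivalent to $c \log^{1-\epsilon_0} n < \eps \log n$, and dividing both sides by $\log^{1-\epsilon_0} n$ (valid for $n \geq 2$) rewrites this as $(\log n)^{\epsilon_0} > c/\eps$. Taking $N := \big\lceil 2^{(c/\eps)^{1/\epsilon_0}} \big\rceil$, any $n > N$ satisfies $\log n > (c/\eps)^{1/\epsilon_0}$, and raising to the $\epsilon_0$-th power (permissible because $\epsilon_0 > 0$ and the base is positive) gives the required lower bound on $(\log n)^{\epsilon_0}$. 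Reversing the manipulations yields $tw(G) \leq 2^{c \log^{1-\epsilon_0} n} < n^{\eps}$.

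There is no serious obstacle here: the entire argument is arithmetic once \cref{thm:twbound} is in hand, and the only point requiring even minor care is that $\epsilon_0 \in (0,1]$ is strictly positive, which is guaranteed by the statement of the theorem. Thus the subpolynomial treewidth bound immediately implies the genuinely polynomial savings claimed in the corollary.
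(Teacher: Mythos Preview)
Your argument is correct and is exactly the elementary growth-rate comparison the paper has in mind; the paper states \cref{subpoly} as an immediate corollary of \cref{thm:twbound} without writing out a proof, and your derivation is the natural way to make that explicit.
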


We make two remarks about Corollary~\ref{subpoly}. First, it implies that for every $t \in \mathbb{N}$ and 
every $\epsilon>0$, 
every problem which is solvable in time $2^{{(tw(G))}^{O(1)}}n^{O(1)}$ admits an algorithm with running time $O(2^{n^\epsilon})$ on graphs in $\mathcal{C}_t^*$.

Second, for every $G \in \{W_{n \times n}, K_{n,n}, K_n\}$ we have that $tw(G) \geq |V(G)|^{\frac{1}{2}}$. Thus, Corollary~\ref{subpoly} is best possible in the following sense. For every proper induced minor closed class ${\mathcal F}$ and integer $r$, let $\mathcal{F}_r^*$ be the subclass of $\mathcal{F}$ consisting of all graphs with no clique of size $r$.
%
Then, for every induced-minor-closed graph class ${\mathcal F}$ and every $r \geq 3$, either ${\mathcal F} \subseteq {\mathcal C}_t$ for some $t$, in which case $\mathcal{F}_r^*$ has subpolynomial treewidth, or for every $N \in \nat$ there is a graph $G \in \mathcal{F}_3^*$ with $|V(G)| > N$ such that $tw(G) \geq |V(G)|^{\frac{1}{2}}$.

\subsection{Definitions and notation.} 
We continue with a few more definitions that will be used throughout the paper.
Let $G$ be a graph. We denote by $cc(G)$ the set of connected components of $G$. For a vertex $v \in V(G)$ we denote by $N(v)$ the set of neighbors of $v$, and $N[v]$ denotes $N(v) \cup \set{v}$. We denote the set of vertices in $G$ at distance exactly $2$ from $v$ by $N^2(v)$. For a set $X \subseteq V(G)$ we denote by $N(X)$ the set of
all vertices of $G \setminus X$ that have a neighbor in $X$, and we let $N[X] = N(X) \cup X$.
A {\em path} in $G$  is an induced subgraph that is a path. The
{\em length} of a path  is the number of edges in it.
We denote by  $P=p_1 \dd \dots \dd p_k$
a path in $G$ where $p_ip_j \in E(G)$ if and only if $|j-i|=1$. We say that $p_1$ and $p_k$ are
the {\em ends} of $P$. The {\em interior} of $P$, denoted by
$P^*$, is the set $P \setminus \{p_1,p_k\}$.
For $i,j \in \{1, \dots. k\}$ we denote by $p_i \dd P \dd p_j$ the subpath of
$P$ with ends $p_i,p_j$.

Let $G$ be a graph and let $A,B\subseteq G$ be disjoint. We say that a set $X\subseteq V(G)\setminus(A\cup B)$ \textit{separates} $A$ from $B$ if for every connected component $D$ of $G\sm X$, $D\cap A =\mt$ or $D\cap B = \mt$.
Let $a,b \in V(G)$ be non-adjacent.
A set $X\subseteq V(G)\setminus\set{a,b}$ \textit{separates} $a$ from $b$ if for every connected component $D$ of $G\sm X$, $|D\cap\set{a,b}|\leq 1$. We also call $X$ an {\em  $a \dd b$-separator}.  We denote by $conn_G(a,b)$ the minimum size of an $a\dd b$-separator in $G$.

\subsection{Proof outline and organization}
First we prove \cref{bipartite lemma} that states that the number of edges in a bipartite graph in $\mathcal{C}_t$ is linear in the number of vertices on the smaller side of the bipartition, provided no two vertices on the other side are twins (this last assumption is necessary because of the example of a large star). The proof uses a result of \cite{MatijaPolyDegen} and some probabilistic arguments. This lemma is separate from the rest of the proof, and we believe it to be of independent interest.

Let us now describe the main proof. For this informal presentation we find it easier to go through the proof in reverse order. First we use a result of \cite{BHKM} that states that the edges of every graph in $\mathcal{C}_t^*$ can be partitioned into a small number of star forests. Then, following the ideas introduced in \cite{Korhonen} and developed in \cite{BHKM}, we reduce the problem of bounding the treewidth of  graphs in $\mathcal{C}_t^*$ to a subclass of consisting of what we call "$(F,r)$-based" graphs. A graph $G$ is {\em $(F,r)$-based} if  there exists  an induced star forest $F$ with no isolated vertices in $G$,  such that $G$ admits a tree decomposition in which every bag consists of at most $r$ objects, each of which is  a star of $F$  or a single vertex; moreover, this phenomenon persists in all induced subgraphs of $G$ (with $F$ modified appropriately).  $(F,r)$-based graphs are defined at the start of \cref{sec:based}.  This is the only place in the proof where we explicitly use the bound on the clique number; the rest of the proof only assumes that the graph at hand is $(F,r)$-based for some $F$ and $r$. This reduction is done in \cref{sec:useclique}.

The next observation is that graphs in $\mathcal{C}_t$ are {\em $(p,q)$-slim} for appropriately chosen parameters $p$ and $q$. This means that in every stable set of $p$ vertices there exists a pair $a,b$ such that there are no $q$ disjoint and pairwise anticomplete $a  \dd b$-paths in $G$ (we call such a pair {\em $q$-slim}). The main ingredient of this proof is a lemma, essentially proved in  \cite{TW17}, applied to an appropriate graph. Analyzing the outcomes of that lemma through the lens of the main theorem of \cite{TW16} gives the result. The details are explained in \cref{sec:slim}.

The next step is to reduce the task of bounding the treewidth of an $(F,r)$-based graph
in $\mathcal{C}_t$ to the question of separating $q$-slim pairs of vertices. This is immediate from a
theorem in \cite{tw7} if the clique number is bounded, but here we present a different proof, which is more in the spirit of \cite{TI2}, that does not use this assumption.  For precise definitions of the terms below, see \cref{sec:twbased}.
Let $G$ be an $(F,r)$-based graph in $\mathcal{C}_t$; it is enough to show that every normal weight function $w$ on $G$ admits a small $w$-balanced separator. Since $G$ is $(F,r)$-based, we can find (by repeatedly using the existence of the special tree decomposition in a sequence of induced subgraphs of $G$) pairwise anticomplete sets $Y_1, \dots, Y_p$, each of size
at most $r$, and such that $N[Y_i]$ is a balanced separator in $G \setminus \bigcup_{j < i}Y_j$.  Next, assume that for every $q$-slim pair of vertices $y_i \in Y_i$ and
$y_j \in Y_j$, there is a small $y_i \dd y_j$-separator $S_{y_iy_j}$ in $G$.
Let $C$ be the union of all such $S_{y_iy_j}$. Then $C$ is still small.
We may assume that some component $D$  of $G \setminus (C \cup \bigcup_{i=1}^pY_i)$ has
$w(D)> \frac{1}{2}$. From the choice of $Y_1, \dots, Y_p$, each $Y_i$ contains a vertex $v_i$ with a neighbor in $D$. Since $G$ is $(p,q)$-slim, some pair $(v_i,v_j)$ is $q$-slim.
But $S_{v_iv_j} \subseteq C$, and yet there is a $v_i \dd v_j$-path with interior in $D$, which is a contradiction. This proof is done in \cref{sec:twbased}.

Our next and final goal is to prove \cref{thm: sep slim general version}, asserting that every $q$-slim pair in an $(F,r)$-based  graph in $\mathcal{C}_t$ admits a small separator. This is the most novel part of the paper, where new ideas are needed.
Let $G$ be an $(F,r)$-based  graph in $\mathcal{C}_t$, and let $(a,b)$ be a $q$-slim pair of vertices in $G$, and assume that no small $a \dd b$-separator exists. Using the special tree decomposition and a lemma from \cite{tw15}, we can find a very large (but with size bounded as a function of $r$ and $t$) collection of 
$a \dd b$-separators $S_1, \dots, S_x$, all pairwise disjoint and anticomplete to each other, such that each $S_i$ has the following properties:
\begin{itemize}
\item $S_i$ has a partition $D_i \cup Y_i \cup X_i$.
\item $Y_i$ is stable.
\item $X_i \subseteq N(Y_i)$.
\item $|D_i \cup Y_i| \leq r$.
\end{itemize}
This is done in \cref{sec:based}. Let $D=\bigcup_{i=1}^xD_i$ and $M= \bigcup_{i=1}^x Y_i$. Then the sizes of $D$ and $M$ are still under control. Note that $M$ is a stable set. Next, we use \cref{bipartite lemma} and averaging arguments to produce an induced subgraph of $H$ of  $G \setminus D$ and a a subset $C$ of $M$  such that
\begin{itemize}
 \item  the size of $H$ is a small proportion of the size of $G$.
 \item the size of $C$ is bounded as a function of $t$ and $r$.
 \item every $a \dd b$-path in $G \setminus (D \cup M)$ contains a subpath with interior in $H$ 
 such that at least $p$ vertices of $C$ have neighbors in the subpath.
\end{itemize}
Such a triple $(H,C,D \cup M)$ is called a {\em good  $a \dd b$-barrier} and its existence is proved in \cref{sec:getbarrier}. 

Since $G$ is $(p,q)$-slim,  it follows from the definition of a good barrier that every 
$a \dd b$-path in $G \setminus (D \cup M)$ contains a subpath $Q$ such that
\begin{itemize}
    \item $Q \subseteq H$, and 
    \item there is a $q$-slim pair $(u,v)$ with $u,v \in C$ such that $Q$ contains a subpath with ends $u,v$.
\end{itemize}

Inductively (on the size of $H$) there is a small (subpolynomial in  $V(H)$) set $S \subseteq V(H)$ such that  every $q$-slim pair $(u,v)$ with  $u,v \in C$ 
is separated in $H \setminus S$.
But now  $S \cup M \cup D$ is an
$a \dd b$-separator in $G$. This argument is carried out in \cref{sec:sepbarred} and
it completes the proof.
See \cref{fig: schema} for a diagrammatic depiction of the outline of the proof of \cref{thm: sep slim general version}.
\begin{figure}[h]
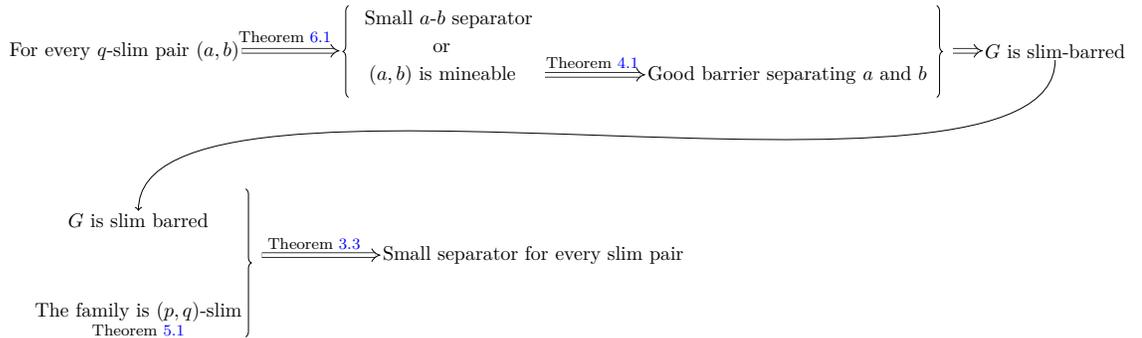

    \centering
    \scalebox{0.75}{\tikzfig{schema}}
    \caption{Outline of the proof of the existence of small separators for $q$-slim pairs in $(F,r)$-based graphs in $\mathcal{C}_t$. } 
    \label{fig: schema}
\end{figure}

Finally, we remark that the results in \cref{sec:sepbarred}  and \cref{sec:getbarrier} are stated in greater generality than what we need for the proof of \cref{thm:twbound}, as we expect them to be useful for other families of graphs that admit balanced separators with small domination number.

\section{The bipartite lemma} \label{sec:bipartite}

 Given a graph $G$, we say that $u,v\in V(G)$ are \textit{twins in $G$} if 
 $N(u)\setminus \{v\}=N(v)\setminus \{u\}$.
If $G$ is clear from the context, we will simply say that $u$ and $v$ are twins. A
{\em twin class} in $G$ is a maximal set of vertices, every two of which are twins. It is not hard to see that every graph has a unique partition into twin classes.

\begin{lemma} \label{bipartite lemma}
    There exists a function $f$ such that the following holds. Let $G$ be a bipartite $K_{t,t}$-induced-minor-free graph with bipartition $(A,B)$ such that no two vertices in $B$ are twins. Then $|E(G)| \leq f(t) |A|$.
\end{lemma}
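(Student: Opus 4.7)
The plan is to derive the bound from the subquadratic edge-density theorem for $K_{t,t}$-induced-minor-free graphs of \cite{MatijaPolyDegen}, amplified via a probabilistic restriction that exploits the twin-free condition on $B$. Let $a=|A|$, $b=|B|$, and $m=|E(G)|$; we aim to show $m\leq f(t)\,a$. By \cite{MatijaPolyDegen} there exist $\delta_t\in(0,1)$ and $C_t>0$ such that every $K_{t,t}$-induced-minor-free graph on $n$ vertices has at most $C_t\,n^{2-\delta_t}$ edges. Applied directly to $G$ this already gives $m\leq C_t(a+b)^{2-\delta_t}$, which is subquadratic but not linear in $a$, since $b$ may a priori be as large as $2^a$; the twin-free condition on $B$ is what will allow us to close this gap.

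The key step is a random restriction of $A$. I would sample $A'\subseteq A$ by including each vertex independently with probability $p$, to be chosen in terms of $\delta_t$. For any two distinct vertices $b,b'\in B$, the twin-free hypothesis gives $|N_G(b)\triangle N_G(b')|\geq 1$, so the probability that $b$ and $b'$ share the same neighborhood in $A'$ is at most $1-p$. Consequently, in the induced subgraph $G':=G[A'\cup B]$, in expectation a positive fraction of the pairs in $B$ remain non-twin. Choosing one representative per twin class in $G'$ yields a subset $B^\star\subseteq B$ for which $G'':=G[A'\cup B^\star]$ is again bipartite, $K_{t,t}$-induced-minor-free, and twin-free on the $B^\star$-side; applying \cite{MatijaPolyDegen} to $G''$ gives $|E(G'')|\leq C_t(|A'|+|B^\star|)^{2-\delta_t}$, which for appropriate $p$ is much smaller than the Matija bound applied directly to $G$.

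The remaining step is to relate $m=|E(G)|$ back to $|E(G'')|$ by accounting for the sizes of the twin classes in $G'$. These sizes are themselves controlled in expectation by the twin-freeness of $G$, since any pair of distinct $B$-vertices collapsing in $G'$ must have its entire symmetric difference avoided by $A'$, an event of probability $\leq 1-p$ per distinguishing vertex. Combining these ingredients, taking expectations, and optimising $p$ (or iterating the restriction $O(\log\log a)$ times) upgrades the subquadratic bound on $G$ to the desired linear bound $m\leq f(t)\,a$. The main obstacle is the calibration of $p$: it must be small enough that the Matija bound on $G''$ strictly improves upon the Matija bound on $G$, yet large enough that twin-class sizes in $G'$ do not blow up and collapse too many edges in the quotient. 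The strict positivity of the exponent $\delta_t$ from \cite{MatijaPolyDegen} is precisely what makes this calibration feasible and drives the bootstrap to a linear bound.
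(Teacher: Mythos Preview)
Your proposal has a genuine gap at the step you yourself flag as the ``main obstacle''. You sample $A'\subseteq A$ with probability $p$, pass to the twin quotient $G''$ on the $B$-side, and then assert that the sizes of the resulting twin classes are ``controlled in expectation'' because a pair $b,b'$ collapses only when $A'$ misses all of $N(b)\triangle N(b')$. But the twin-free hypothesis only guarantees $|N(b)\triangle N(b')|\geq 1$, so the collapse probability is merely $\leq 1-p$, which is close to $1$ when $p$ is small. Nothing in your argument bounds the sizes of twin classes, and hence nothing bounds the ratio $|E(G')|/|E(G'')|$. Without that, you cannot carry $|E(G'')|\leq C_t(|A'|+|B^\star|)^{2-\delta_t}$ back to a bound on $m$. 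The promised ``bootstrap'' or ``$O(\log\log a)$ iterations'' is asserted, not proved: you would need each round to strictly shrink some invariant, and you have not identified one. (Separately, \cite{MatijaPolyDegen} gives bounded \emph{degeneracy} for $K_{t,t}$-induced-minor-free graphs, hence already a linear-in-$|V|$ edge bound; starting only from a subquadratic bound throws away information you could use.)

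The paper's proof uses randomness in a completely different way. It first exploits degeneracy to reduce to a subgraph $G''$ in which every $B$-vertex has bounded degree and every remaining $A$-vertex has large second neighbourhood (the twin-free condition on $B$ is used here, and only here, to bound the number of edges lost in this reduction). It then samples $X\subseteq A''$ with probability $1/\Delta$ and looks at \emph{good pairs}: pairs $u,v\in X$ for which some $b\in B''$ has $N(b)\cap X=\{u,v\}$. Because the $B$-side has bounded degree, such pairs are plentiful in expectation; because each good pair comes with a private witness $b$, the auxiliary graph $\Gamma$ on $X$ whose edges are the good pairs is an \emph{induced minor} of $G$, so the degeneracy theorem applies again to $\Gamma$. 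Comparing the lower and upper edge counts for $\Gamma$ yields a contradiction. The idea you are missing is precisely this auxiliary graph on the $A$-side, which lets one re-enter the density theorem rather than trying to bootstrap it directly.
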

\begin{proof}
    It follows from the main result of  \cite{MatijaPolyDegen} that there exists $\Delta = \Delta(t)$ such that $G$ is $\Delta$-degenerate. We may assume that $\Delta>2$. 
    \cite{MatijaPolyDegen} also implies the existence of $\Delta'=\Delta'(t)$ as a  degeneracy bound for graphs with no $K_{2t}$-subgraph and no induced subdivision of $K_{t,t}$.
     Let $f(t) = 2^{40\Delta^2\Delta'}\Delta+ (\Delta+1)\Delta $. Let us take a degenerate ordering of $V(G)$,
    that is an ordering $v_1, \dots, v_n$ in which each $v_i$ has at most  $\Delta$ neighbors in $\{v_{i+1}, \dots, v_n\}$.   We may assume that the ordering $v_1, \dots, v_n$
    was chosen so that  the vertices of $A$ appear as late as possible:  if $v_i \in A$ then all the vertices of degree at most $\Delta$ in $G\sm\set{v_1,\dots,v_{i-1}}$ are in $A$.

    \sta{If $1 \leq i < j \leq n$,  $v_i \in A$ and $\set{v_{i+1},\dots,v_j}\cap A = \mt$ then $j-i \leq \Delta$.\label{small_A_gaps}}
    
Let $i< l\leq j$. It follows from our choice of the ordering that 
$|N(v_l)\sm\set{v_1,\dots,v_{i-1}}|>\Delta$. Moreover, since $v_{i+1},\dots,v_{l-1}\in B$ and $B$ is stable, we have that
$\Delta \geq |N(v_l) \sm \set{v_1,\dots,v_{l-1}}|=|N(v_l) \sm \set{v_1,\dots,v_{i}}|.$
This implies that $v_l \in N(v_i)\sm\set{v_1,\dots,v_{i-1}}$. Since $ |N(v_i)\sm\set{v_1,\dots,v_{i-1}}| \leq \Delta$, \eqref{small_A_gaps} follows.
\\
\\

Let $s\in \nat$ be minimal such that $v_s\in A$. Let $B_1 = B \cap \set{v_s,\dots,v_n}$.
Let $G' = G\sm{B_1}$ and $B'= B\sm B_1$.  Note that every vertex in $B'$ has a degree at most $\Delta$ in $G'$.
 
 \sta{$|E(G')| \geq |E(G)|-|A|(\Delta+1)\Delta$. \label{E(G')}}

 Since $G$ is bipartite, $E(G)\sm E(G') = E(G[\set{v_s,\dots,v_n}])$. By \eqref{small_A_gaps}, $|\set{v_s,\dots,v_n}| \leq |A|(\Delta+1)$.
 Since $G[\set{v_s,\dots,v_n}]$ is $\Delta$-degenerate, $|E(G')| \geq |E(G)|- |A|(\Delta+1)\Delta$.
 This proves \eqref{E(G')}.
 \\
 \\

Let $r=40\Delta^2\Delta'$.
   Let $k\in \nat\cup\set{0}$ be maximal such that there exist $a_1,\dots,a_k \in A$ satisfying  that, for all $1 \leq i\leq k$, $|N^2_{G'\sm N_{G'}[ \set{a_1,\dots,a_{i-1}}]}(a_i)|\leq r$.
    Let $A_1 = \set{a_1,\dots,a_k }$ and let $A'' = A\sm A_1$ and $B'' = B'\sm N_{G'} (A_1)$. Let $G'' = G[A''\cup B'']$.

    \sta{$|E(G'')| \geq |E(G')| - 2^{r}\Delta|A_1|$ \label{claim remove small second neighborhoods}}

    Since there are no twins in $B$, we have that for every $i \leq k$, $|N_{G'}(a_i)\sm N[\set{a_1,\dots,a_{i-1}}]|\leq 2^{r}$. Thus, since 
    every vertex in $B'$ has a degree at most $\Delta$ in $G'$, 
    we deduce that $|E(G'')| \geq |E(G')| - 2^{r}\Delta|A_1|$,  proving \eqref{claim remove small second neighborhoods}.
    \\
    \\
    

    If $|A''|=0$ then \eqref{E(G')} and \eqref{claim remove small second neighborhoods} imply that $$|E(G)| \leq \left(2^{r}\Delta +(\Delta+1)\Delta \right) |A|$$ and so we are done. Therefore, we may assume for a contradiction that $|A''|>0$.

    \sta{Let $S = \set{\set{u,v} \text{ s.t \ } u,v\in A'' \text{ and } u\in N^2_{G''}(v)}$, then $|S| \geq \frac{r|A''|}{2}$ \label{handshake lemma number of potential good pairs}}

    Since for every $v\in A''$, we have $|N^2_{G''}(v)|> r$, there are at least $r|A''|$  ordered pairs of vertices in $A''$ at distance $2$. Dividing by $2$ accounts for the double counting. This proves \eqref{handshake lemma number of potential good pairs}.
    \\
    \\
    Sample $X\subseteq A''$ by iterating over every element of $A''$ and including it in $X$ with probability $p=\frac{1}{\Delta}$.
    We will say that $u,v\in X$ is a \textit{good pair} if there exists $b\in B''$ for which $N(b)\cap X =\set{u,v}$. We now show that the expected number of good pairs is fairly high.
    Let $\set{u,v}\in S$.
    We have that
    $$\proba{u,v\text{ is a good pair}} \geq \frac{1}{\Delta^2}\left(1-\frac{1}{\Delta}\right)^{\Delta-2}\geq \frac{1}{10 \Delta^2}, $$ as this bounds the probability of the event that $N(b)\cap X =\set{u,v}$ for $b\in B$ such that $\set{u,v}\subseteq N(b)$. 
    Therefore, by \eqref{handshake lemma number of potential good pairs}, $\E{\# \text{ good pairs}} = \sum_{\set{u,v}\in S} \proba{u,v\text{ is a good pair}} \geq  \frac{1}{10\Delta^2} \frac{r|A''|}{2}$.
    So there exists a choice of $X^*$ with at least $\frac{r|A''|}{20\Delta^2}$ good pairs. 
    Let $\Gamma$ be the graph with vertex set $X^*$, and where $u$ is adjacent to $v$ if and only if  $u,v$ is a good pair.

\sta{$E(\Gamma) \leq \Delta'|X^*|$.} \label{EGamma}
Since  $\Gamma$ is an induced minor of $G$, it follows that $\Gamma$ is  $K_{t,t}$-induced-minor-free, and in particular no induced subgraph of $\Gamma$ is a subdivision of $K_{t,t}$. Next suppose that there is a clique $K$ of size $2t$ in $\Gamma$;
let $K=\{k_1, \dots, k_{2t}\}$. It follows from the definition of a good pair that for
every $1 \leq j<j \leq 2t$ there exists $b_{ij} \in B$ such that $N(b_{ij}) \cap K=\{k_i,k_j\}$. But then $K \cup \{b_{ij}\}_{1 \leq i <j \leq 2t}$ is an induced 
subdivision of $K_{2t}$ in $G$, contrary to the fact that $G$ is $K_{t,t}$-induced-minor-free. This proves that $\Gamma$ has no clique of size ${2t}$. Now
the main result of \cite{MatijaPolyDegen} implies that $\Gamma$ is $\Delta'$-degenerate, and therefore $|E(\Gamma)|\leq \Delta' |X^*|$.
This proves~\eqref{EGamma}.
\\
\\
    On the other hand, by the choice of $X^*$ we  have that $|E(\Gamma)| \geq \frac{r|A''|}{20\Delta^2} \geq \frac{r|X^*|}{20\Delta^2} \geq 2\Delta' |X^*|$,
    contrary to \eqref{EGamma}. Hence $|A''|=0$, concluding the proof. 
\end{proof}

\section{Separating slim pairs in barred graphs} \label{sec:sepbarred}

Let $G=(V,E)$ be a graph, let $a,b\in V$ be non-adjacent and let $t,s\in\nat$. We say that the pair $(a,b)$ is  {\em $s$-wide} if there exist $s$ internally anticomplete
  $a \dd b$-paths in $G$; a pair of non-adjacent vertices that is not $s$-wide is said to be {\em $s$-slim}.
We say that $G$ is \textit{$(t,s)$-slim} if for every stable set $S\subseteq V$ of size $t$, there exist $a,b\in S$ such that $(a,b)$ is $s$-slim. Similarly, we say that a graph class $\mathcal{F}$ is \textit{$(t,s)$-slim} if every graph in $\mathcal{F}$ is $(t,s)$-slim.

In this section, we take the first step to our next goal: showing that an $s$-slim pair can be separated by a small subset of vertices. To do so we define the notion of a "barrier".
Loosely speaking, a barrier separating $a$ from $b$ is a relatively small induced subgraph $F$ of $G$ such that, in order to separate $a$ from $b$ in $G$, it suffices to delete a few vertices from $G \setminus F$ and then separate a few slim pairs from each other in $F$.
Now fix a slim pair $(a,b)$.
Assuming that such a barrier exists for {\em every} $s$-slim pair for an appropriately chosen $s$ (which is a property we called "slim-barred"), we design a recursive procedure to obtain a "small" $a\dd b$-separator in $G$. 
This reduces the problem of separating slim pairs of vertices to the problem of finding good barriers. In this section, we define and analyze this reduction.

Let $G=(V,E)$ be a graph, let $C,X,Y,Z$ be disjoint (and possibly empty) subsets of $V$ and let $t,p\in \nat$. Let $G'=G\sm C$.
We say that $B=(X,Y,Z,C)$ is a {\em $(t,p)$-barrier} if the following hold:
\begin{enumerate}
    \item\label{prop:barrier-interior-path} Every $X \dd Z$ path $P$ in $G'$ contains an $X \dd Z$ path $P'$ such that the interior of $P'$ is contained in $Y$. 
    \item\label{prop:barrier-see-t-comps} For every $X \dd Z$ path $P$ in $G'$, $| \set{\mathcal{C} \in cc(C) \mid N(P)\cap \mathcal{C} \neq\mt }| \geq t$.
    \item\label{prop:barrier-component-size-p} Every connected component of $C$ has at most $p$ vertices.
\end{enumerate}
We say that two $(t,p)$-barriers $B = (X,Y,Z,C)$ and $B' = (X',Y',Z',C')$ are \emph{disjoint} if $X \cup Y \cup Z$ is disjoint from $X' \cup Y' \cup Z'$. Similarly, we say that two $(t,p)$-barriers $B = (X,Y,Z,C)$ and $B' = (X',Y',Z',C')$ are \emph{anticomplete} if $X \cup Y \cup Z$ is anticomplete to $X' \cup Y' \cup Z'$. Let $u,v\in V\sm(X\cup Y\cup Z\cup C)$. We say that $B$ separates $u$ from $v$ (in $G$) if both $X\cup C$ and $Z\cup C$ separate $u$ from $v$ in $G$. See \cref{fig: barrier} for an illustration.
\begin{figure}[h]
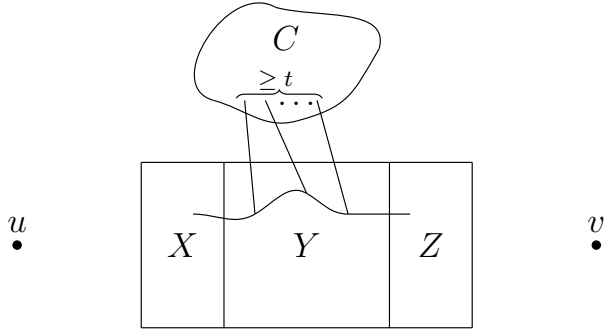

    \centering
    \scalebox{1.1}{\tikzfig{barrier}}
    \caption{Visualization of a barrier separating $u$ from $v$.} 
    \label{fig: barrier}
\end{figure}

We say that a $(t,p)$-barrier $B = (X,Y,Z,C)$ is \textit{reduced} if  every connected component of $X\cup Y\cup Z$ intersects both $X$ and $Z$.

\begin{lemma}\label{lemma: reducing barriers}
    Let $B=(X,Y,Z,C)$ be a $(t,p)$-barrier separating $u$ from $v$. Then there exist $X'\subseteq X,Y'\subseteq Y, Z'\subseteq Z$ such that $(X',Y',Z',C)$ is a reduced $(t,p)$-barrier separating $u$ from $v$.
\end{lemma}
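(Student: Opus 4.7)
My plan is to keep only the ``useful'' part of $X\cup Y\cup Z$, namely the connected components of $F:=G[X\cup Y\cup Z]$ (which coincides with $G'[X\cup Y\cup Z]$ since $X,Y,Z$ are disjoint from $C$) that meet both $X$ and $Z$. I would call such a component \emph{good}, the rest \emph{bad}; letting $U$ be the union of all good components, I would put $X':=X\cap U$, $Y':=Y\cap U$ and $Z':=Z\cap U$. The ``reduced'' conclusion is then automatic from this choice, because the connected components of $G[X'\cup Y'\cup Z']$ are precisely the good components of $F$, each of which by construction meets both $X'$ and $Z'$.

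Next I would verify the three barrier properties for $(X',Y',Z',C)$. Property~(3) is inherited verbatim since $C$ is untouched, and property~(2) passes through because every $X'\dd Z'$ path in $G'$ is in particular an $X\dd Z$ path, so the count of components of $C$ intersecting its neighbourhood is unchanged. For property~(1) applied to such a path $P$, the original property furnishes a subpath $P'$ with endpoints $x\in X$, $z\in Z$ and interior in $Y$; since $P'$ is connected and contained in $X\cup Y\cup Z$, it lies in a single component $D$ of $F$. The endpoints of $P'$ witness that $D$ is good, so in fact $x\in X'$, $z\in Z'$, and the interior lies in $Y'$, as required.

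Finally, I would show that $X'\cup C$ (and, by symmetry, $Z'\cup C$) still separates $u$ from $v$ in $G$. Suppose for contradiction that some $u\dd v$ path $Q$ in $G$ avoids $X'\cup C$. Because $X\cup C$ and $Z\cup C$ both separate $u$ from $v$ and $Q$ avoids $C$, the path $Q$ must meet both $X$ and $Z$ in $G'$, so one can extract a subpath $R\subseteq Q$ whose endpoints lie in $X$ and $Z$. Applying property~(1) of $B$ to $R$ produces a further subpath $R'\subseteq R\subseteq Q$ with endpoints in $X,Z$ and interior in $Y$, and the single-component argument from the previous paragraph forces the $X$-endpoint of $R'$ to lie in $X'$, contradicting that $Q$ avoids $X'$. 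The one point to keep track of is that the subpath produced by property~(1) has to be realized inside $Q$ rather than being some unrelated $X\dd Z$ path; this is handled by applying the property to $R$, which is already a subpath of $Q$, so every subpath of $R$ is automatically a subpath of $Q$.
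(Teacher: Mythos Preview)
Your proof is correct and follows essentially the same route as the paper: both restrict to the union $\Gamma$ (your $U$) of components of $G[X\cup Y\cup Z]$ meeting both $X$ and $Z$, then verify the barrier axioms and separation by passing to a subpath with interior in $Y$ and observing it lies in a single good component. Your write-up is in fact a bit more explicit than the paper's about the ``single-component'' step that upgrades $Y$ to $Y'$ and $X$ to $X'$.
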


\begin{proof}
    Let $\Gamma$ be the union of all connected components in $X\cup Y\cup Z$ meeting both $X$ and $Z$.
    Let $X'=X\cap \Gamma, Y'=Y\cap \Gamma, Z'=Z\cap \Gamma$.
    
    \sta{$(X',Y',Z',C)$ is a $(t,p)$-barrier. \label{claim: reduced still barrier}}

    Condition \ref{prop:barrier-interior-path} holds as every $X' \dd Z'$ path in $G\sm C$ contains a path with interior in $Y$, and therefore in $Y'$. Condition \ref{prop:barrier-see-t-comps} holds as every $X' \dd Z'$ path is also a $X \dd Z$ path. Condition \ref{prop:barrier-component-size-p} holds since $C$ is unchanged.  This proves \eqref{claim: reduced still barrier}.
    \\
    \\
    \sta{$X'\cup C$ separates $u$ from $v$. \label{claim: reduced barier still separates}}
    
    Suppose not. Then there is a $u \dd v$-path $P$ with $P\cap (X'\cup C) = \mt$. Since $(X,Y,Z,C)$ separates $u$ from $v$, it follows that  $P \cap X \neq \emptyset$
    and $P \cap Z \neq \emptyset$, and consequently $P$
    contains an $X \dd Z$ path $Q$. Since $(X,Y,Z,C)$ is a barrier, we may assume that
    $Q^* \subseteq Y$. But then $Q \cap X' \neq \emptyset$,
    a contradiction. This proves \eqref{claim: reduced barier still separates}.
    \\
    \\
    Similarly, $Z'\cup C$ separates $u$ from $v$ and, thus,  $(X',Y',Z',C)$ separates $u$ from $v$.
\end{proof}

Let $p,s,t\in \nat$ and let $c,f,g: \nat\rightarrow \nat$ be functions such that $c(n)+f(n)+g(n)<n$. We say that an $n$-vertex graph $G$ is \textit{$(s,t,p,c,f,g)$-slim-barred} if for every $s$-slim pair $(u,v)$ there exist disjoint subsets $X,Y,Z,C,M$ of $V\sm \set{u,v}$ such that
\begin{enumerate}
    \item $B=(X,Y,Z,C)$ is a $(t,p)$-barrier in $G\sm M$ that separates $u$ from $v$.
    \item $|cc(C)| \leq c(n)$.
    \item $|M|\leq f(n)$.
    \item $|X\cup Y\cup Z| \leq g(n)$.
\end{enumerate}

Similarly, we say that a graph class $\mathcal{F}$ is \textit{$(s,t,p,c,f,g)$-slim-barred} if every graph in $\mathcal{F}$ is $(s,t,p,c,f,g)$-slim-barred.

\begin{lemma}
\label{lemma: slim barred to sep recursion}
    Let $s,t\in \nat$ and let $c,f,g: \nat\rightarrow \nat$ be functions such that $g(n)<n$.
    Let $\mathcal{H}$ be a hereditary graph class that is $(t,s)$-slim and $(s,t,p,c,f,g)$-slim-barred. Let 
    $$h(n)=\max_{G \in \mathcal{H}, \; |V(G)|\leq n} \max_{\; \; \; (a,b) \text{ is an  }s \text{-slim pair in } G} conn_G(a,b).$$
    Then, $h$ obeys the following recursive inequality.
 
    $$h(n) \leq     \begin{cases}
       n &\quad\text{if } n\leq 10\\
       f(n)+3(c(n)p)^2 + (c(n)p)^2h(g(n)) &\quad\text{otherwise} \\
     \end{cases}$$
\end{lemma}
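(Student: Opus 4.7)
The plan is to induct on $n$, with the trivial base case $n\leq 10$ handled by taking $V(G)\setminus\{a,b\}$ as the separator (which has size at most $n-2\leq n$). For the inductive step with $n>10$, I would apply the $(s,t,p,c,f,g)$-slim-barred hypothesis to the given $s$-slim pair $(a,b)$ in $G$, producing disjoint sets $X,Y,Z,C,M\subseteq V(G)\setminus\{a,b\}$ such that $B=(X,Y,Z,C)$ is a $(t,p)$-barrier in $G\setminus M$ separating $a$ from $b$, with $|M|\leq f(n)$, $|X\cup Y\cup Z|\leq g(n)$, $|cc(C)|\leq c(n)$, and hence $|C|\leq c(n)p$.

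The candidate separator I would construct is $T:=M\cup C\cup S$, where $S$ is built recursively. Let $G_0:=G[X\cup Y\cup Z]$. For each unordered pair $\{u,v\}$ of vertices of $C$ lying in distinct components of $C$ for which $(u,v)$ is $s$-slim in $G$, set $H_{u,v}:=G[V(G_0)\cup\{u,v\}]$. Since $\mathcal{H}$ is hereditary and $s$-slimness passes to induced subgraphs, $H_{u,v}\in\mathcal{H}$ and $(u,v)$ is $s$-slim in $H_{u,v}$; since $|V(H_{u,v})|\leq g(n)+2<n$ (using $c(n)+f(n)+g(n)<n$), the inductive hypothesis supplies a $u$-$v$-separator $S_{u,v}$ in $H_{u,v}$ of size at most $h(g(n)+2)$, and I set $S:=\bigcup S_{u,v}$. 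To verify correctness, suppose toward contradiction that some $a$-$b$-path $P$ survives in $G\setminus T$. Since $P$ avoids $M\cup C$ and $B$ separates $a$ from $b$ in $G\setminus M$, $P$ meets both $X$ and $Z$, so it contains an $X$-$Z$-subpath; by barrier property~(\ref{prop:barrier-interior-path}) it contains an $X$-$Z$-subpath $Q'$ with interior in $Y$, so $Q'\subseteq V(G_0)$ and $V(Q')\cap S=\emptyset$. Barrier property~(\ref{prop:barrier-see-t-comps}) applied to $Q'$ yields at least $t$ components of $C$ each containing a neighbor of $Q'$; choosing one representative per component produces a stable set of size $t$ in $G$, and by the $(t,s)$-slim hypothesis some pair $(c_i,c_j)$ among them is $s$-slim in $G$, hence was considered in the construction of $S$. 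Appending the two adjacency edges from $c_i,c_j$ to $Q'$ yields a $c_i$-$c_j$-path in $H_{c_i,c_j}$ whose interior lies in $V(Q')$, and therefore avoids $S_{c_i,c_j}\subseteq S$, contradicting the choice of $S_{c_i,c_j}$.

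For the size bound, from $|T|\leq f(n)+c(n)p+\binom{|C|}{2}h(g(n)+2)$ and $\binom{|C|}{2}\leq(c(n)p)^2/2$, the $h(g(n)+2)$ term must be reconciled with the stated $h(g(n))$. I propose to use the Lipschitz estimate $h(m+2)\leq h(m)+2$, which holds because any $G'\in\mathcal{H}$ of order at most $m+2$ carrying an $s$-slim pair admits, after deleting two non-endpoint vertices (using heredity of $\mathcal{H}$ and preservation of slimness under induced subgraphs), an induced subgraph in $\mathcal{H}$ of order at most $m$ to which the bound $h(m)$ applies, and the two deleted vertices can then be re-added to any separator at a cost of $2$. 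Combining this with the crude inequality $c(n)p+(c(n)p)^2\leq 2(c(n)p)^2$ yields exactly the claimed bound $f(n)+3(c(n)p)^2+(c(n)p)^2 h(g(n))$. The main obstacle is the correctness check, which delicately weaves both barrier properties with the $(t,s)$-slim hypothesis to ensure that every hypothetical $a$-$b$-path in $G\setminus T$ is defeated by one of the inductively chosen separators $S_{c_i,c_j}$; the size accounting and the Lipschitz estimate for $h$ are then routine.
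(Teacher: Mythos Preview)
Your proof is correct and follows essentially the same approach as the paper. The only cosmetic differences are that the paper indexes its set $I$ by pairs that are $s$-slim in the subgraph $G[X\cup Y\cup Z\cup\{u,v\}]$ rather than in $G$ (which is harmless since $s$-slimness in $G$ implies $s$-slimness in any induced subgraph, so the pair found in the correctness argument lies in $I$ either way), and the paper obtains the ``$+2$'' by deleting two vertices from a minimum separator rather than stating it as the Lipschitz estimate $h(m+2)\le h(m)+2$; these are two phrasings of the same trick.
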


\begin{proof}We proceed by induction on $n$. If $n\leq 10$, then the statement holds trivially. 
    Let $G$ be an $n$-vertex graph in $\mathcal{H}$ and $(a,b)$ be an $s$-slim pair in $G$.
    Let $X,Y,Z,C,M$ be disjoint subsets of $V\sm \set{a,b}$ as in the definition of $(s,t,p,c,f,g)$-slim-barred for $a$ and $b$. Let $I= \set{\set{u,v} \big| u,v \in C \text{ and } u,v \text{ is an $s$-slim pair in } X\cup Y\cup Z \cup \set{u,v}}$. 
    For every pair $\set{u,v} \in I$, let $M_{u,v}$ be a $u \dd v$ separator in  $X\cup Y\cup Z \cup \set{u,v}$ of size $conn_{X \cup Y \cup Z \cup \{u,v\}}(u,v)$. Let $M'= \bigcup_{\set{u,v}\in I} M_{u,v}$.

    \sta{$M\cup M' \cup C$ separates $a$ from $b$ in $G$. \label{claim: recursion because separates 2 slim} }
    Suppose not and let $P$ be a path from $a$ to $b$ in $G\sm (M\cup M' \cup C)$.
    Since $(X,Y,Z,C)$ separates $a$ from $b$ in $G$, both $P\cap X$ and $P\cap Z$ are non-empty. Therefore, $P$ contains an $X \dd Z$ path. Since $(X,Y,Z,C)$ is a $(t,p)$-barrier, it follows that  
    $N(P)$ meets at least $t$ connected components of $C$. Since $\mathcal{H}$ is $(t,s)$-slim, there exist $u,v  \in N(P)\cap C$ 
    such that the pair $(u,v)$ is $s$-slim. Then  $M_{u,v} \subseteq M'$.
    But since both $u$ and $v$ have neighbors in $P$,  there is a 
    $u \dd v$-path with interior in $P$. This is     a contradiction, 
    and \eqref{claim: recursion because separates 2 slim} follows.
\\
\\
\sta{For all $\set{u,v} \in I$, $|M_{u,v}| \leq h(g(n))+2$. \label{claim size min separator is kinda sublinear}}
If $|M_{u,v}|\leq 2$, the statement trivially holds, so we may assume that $|M_{u,v}|\geq 2$.
Let $c,d\in M_{u,v}$ and let $M_{u,v}'$ be a $u\dd v$ separator in $\left(X\cup Y\cup Z \cup \set{u,v}\right)\sm \set{c,d}$ with $|M_{u,v}'|$ minimum. Then, by induction, we have $|M_{u,v}|\leq 2 + |M_{u,v}'| \leq 2+ h(g(n))$. This proves \eqref{claim size min separator is kinda sublinear}.
\\
\\

    Since  $C$ contains at most $|C|^2\leq (c(n)p)^2$ slim pairs, and using both \eqref{claim: recursion because separates 2 slim} and \eqref{claim size min separator is kinda sublinear}, we get an $a \dd b$ separator of size  $$|M|+|C|+|M'|\leq f(n)+c(n)p+ (c(n)p)^2(h(g(n))+2)\leq f(n)+3(c(n)p)^2+ (c(n)p)^2h(g(n)).$$
\end{proof}

\begin{theorem}
\label{thm:slimbarredtosep}

    Let $s,t\in \nat$ and let $c,f,g: \nat\rightarrow \nat$ be increasing functions such that $g(n)<n$ and such that the ratio $\frac{n}{g(n)}$ is non-increasing.
    Let $\mathcal{H}$ be a hereditary graph class that is $(t,s)$-slim and $(s,t,p,c,f,g)$-slim-barred. Let 
    $$h(n)=\max_{G \in \mathcal{H}, \; |V(G)|\leq n} \max_{\; \; \; (a,b) \text{ is an  }s \text{-slim pair in } G} conn_G(a,b).$$
    Then, $h(n) \leq 20(f(n)+3c(n)^2p^2) (c(n)p)^{ \frac{2\log{(n)}}{\log\left(\frac{n}{g(n)}\right)}}$ 
\end{theorem}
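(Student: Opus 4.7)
The plan is to iterate the recursive inequality from \cref{lemma: slim barred to sep recursion} until the argument falls below the base case threshold. Define the sequence $n_0 = n$ and $n_{i+1} = g(n_i)$, and let $k$ be the smallest integer with $n_k \leq 10$. Writing $A_i = f(n_i) + 3(c(n_i)p)^2$ and $B_i = (c(n_i)p)^2$, repeated application of the recursion yields
$$h(n) \leq \sum_{i=0}^{k-1} \Bigl(\prod_{j=0}^{i-1} B_j\Bigr) A_i + \Bigl(\prod_{j=0}^{k-1} B_j\Bigr) h(n_k).$$
Since $c$ and $f$ are increasing and $n_i \leq n$ for all $i$, we have $A_i \leq A_0$ and $B_i \leq B_0 = (c(n)p)^2$, and $h(n_k) \leq 10$ by the base case, so
$$h(n) \leq A_0 \sum_{i=0}^{k-1} B_0^i + 10 B_0^k.$$

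Next I would bound $k$ using the hypothesis that $m \mapsto m/g(m)$ is non-increasing. Since $n_i \leq n$, this yields $n_i/n_{i+1} = n_i/g(n_i) \geq n/g(n)$ for every $i$, and iterating gives $n_i \leq n/(n/g(n))^i$. In particular, $n_{k-1} > 10$ forces $(n/g(n))^{k-1} < n/10$, hence $k-1 < \log(n/10)/\log(n/g(n)) < L$, where $L := \log(n)/\log(n/g(n))$ is exactly the exponent in the target bound. Consequently $B_0^{k-1} \leq (c(n)p)^{2L}$ and $B_0^k \leq (c(n)p)^{2L+2}$.

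Substituting these estimates and assuming $B_0 \geq 2$ (so the geometric sum is at most $2 B_0^{k-1}$), one obtains
$$h(n) \leq 2 A_0 (c(n)p)^{2L} + 10(c(n)p)^{2L+2} = \bigl(2 f(n) + 16(c(n)p)^2\bigr)(c(n)p)^{2L},$$
which is at most $20\bigl(f(n) + 3(c(n)p)^2\bigr)(c(n)p)^{2L}$, matching the stated bound. The main obstacle is essentially bookkeeping: using the monotonicity of $c,f$ to replace all $A_i,B_i$ by their values at $n$, and then pinning down the constants $20$ and $3$ exactly. The degenerate case $c(n)p = 1$ makes $B_0 = 1$, so that the geometric sum becomes linear in $k$; this situation is either outside the intended scope of the bound or handled by a separate direct summation, but it does not affect the conceptual content of the argument.
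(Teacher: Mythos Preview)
Your proposal is correct and follows essentially the same approach as the paper. In fact, the paper explicitly describes your recursion-tree unrolling as the intuition behind the result before giving the formal argument; the only cosmetic difference is that the paper packages the computation as a closed-form induction (proving $H(n) \leq 20K z^{\log n/\log(n/g(n))} - K/(z-1)$ with $K = f(N)+3(c(N)p)^2$ and $z = (c(N)p)^2$ frozen at $N$) rather than summing the unrolled series directly, and the paper likewise tacitly assumes $z>1$, matching your caveat about $c(n)p=1$.
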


Intuitively, \cref{thm:slimbarredtosep} follows by analyzing the recursion tree for the function $h$ obtained by \cref{lemma: slim barred to sep recursion}. It has a depth of at most $\frac{\log{(n)}}{\log\left(\frac{n}{g(n)}\right)}$ and each of its nodes has at most  $(c(n)p)^2$ children; consequently it has a most $2  (c(n)p)^{ \frac{2\log{(n)}}{\log\left(\frac{n}{g(n)}\right)}}$ vertices. The contribution of each vertex of the recursion tree is, at most, $10(f(n)+3c(n)^2p^2)$. We now proceed with a formal proof.

\begin{proof} [Proof of \cref{thm:slimbarredtosep}.]

Let $N \in \nat$. Let 
$$H(n) = \begin{cases}
       n &\quad\text{if } n\leq 10\\
       f(N)+3(c(N)p)^2 + (c(N)p)^2H(g(n)) &\quad\text{otherwise} \\
     \end{cases}.$$
By \cref{lemma: slim barred to sep recursion} and since $f$ and $c$ are increasing, we have that $h(n)\leq H(n)$ for all $n\leq N$, so it is sufficient to prove that $H(N) \leq 20(f(N)+3c(N)^2p^2) (c(N)p)^{ \frac{2\log{(N)}}{\log\left(\frac{N}{g(N)}\right)}}$.

We prove the following slightly stronger statement: for all $n\leq N$, we have
$$H(n) \leq 20(f(N)+3c(N)^2p^2) (c(N)p)^{ \frac{2\log{(n)}}{\log\left(\frac{n}{g(n)}\right)}} - \frac{(f(N)+3c(N)^2p^2)}{c(N)^2p^2 -1}.$$
Let $K= f(N)+3(c(N)p)^2$ and $z = c(N)^2p^2$.
We proceed by induction on $n$. If $n\leq 10$, then $20K - \frac{K}{z-1} \geq 19K \geq 19 \geq n $.

Therefore, we may assume that  $n >10$ and that for all $n'\leq n$, $H(n') \leq 20K\ z^{ \frac{\log{(n')}}{\log\left(\frac{n'}{g(n')}\right)}} - \frac{K}{z-1}$.

Now  we have that
\begin{align*}
    H(n) &\leq K + z\ H(g(n))\\
    &\leq K + z\left(20K\ z^{ \frac{\log{(g(n))}}{\log\left(\frac{g(n)}{g(g(n))}\right)}} - \frac{K}{z-1}\right)\\
    &\leq K + z\left(20K\ z^{ \frac{\log{(g(n))}}{\log\left(\frac{n}{g(n)}\right)}} - \frac{K}{z-1}\right)\\
    &=K + z\left(20K z^{ \frac{\log{\left(n \frac{g(n)}{n}\right)}}{\log\left(\frac{n}{g(n)}\right)}} - \frac{K}{z-1}\right)\\
    &=K + z\left(20K z^{ \frac{\log{(n)}}{\log\left(\frac{n}{g(n)}\right)}-1} - \frac{K}{z-1}\right)\\
    &=K + 20K\ z^{ \frac{\log{(n)}}{\log\left(\frac{n}{g(n)}\right)}} - \frac{zK}{z-1}\\
    &= 20K\ z^{ \frac{\log{(n)}}{\log\left(\frac{n}{g(n)}\right)}} - \left(\frac{z}{z-1}-1\right)K\\
    &= 20K\ z^{ \frac{\log{(n)}}{\log\left(\frac{n}{g(n)}\right)}} - \frac{K}{z-1}
\end{align*}
as required.

\end{proof}

\section{From mines to barriers} \label{sec:getbarrier}
In this section, we introduce the notion of ``mineable'' pairs of vertices. Informally, a pair $a,b$ is mineable if there exist many ``almost disjoint'' $a\dd b$-separators each with a small dominating set (a ``core''), and these cores are pairwise 
disjoint and anticomplete.
We show that mineable pairs of vertices admit good barriers separating them (after 
deleting a small set of vertices from the graph). This is then used to prove that 
our graph class is slim-barred. We now proceed with formal definitions.
%
In this paper the we will only apply mineability with $z=p=1$. We include the more general form for potential future applications. 


Let $x,y,z, p\in \nat$ and let $G$ be a graph. For $a,b \in V(G)$,
we say that $(a,b)$ is $(x,y,z,p)$-mineable in $G$ if there exist disjoint  $Y_1,\dots,Y_x\subseteq V \setminus \{a,b\}$ for which the following hold:
    
\begin{enumerate}
    \item\label{prop:separators} For every $i$, there exists non-empty $X_i \subseteq N(Y_i) \setminus \{a,b\}$ such that $X_i' = Y_i \cup X_i$ is an $a \dd b$ separator  in $G \setminus \bigcup_{j<i} Y_j$. 
    \item\label{prop:absamecomp}  $a$ and $b$ belong the the same component of $G \setminus \bigcup_{j=1}^x Y_j$; and in particular 
    for every $i$, $a$ and $b$ belong to the same component of 
    $G \setminus \bigcup_{j<i} Y_j$. 
    \item\label{prop:separatorsanticomplete} For distinct $i,j \in [x]$, $Y_i$ and $Y_j$ are anticomplete.
    \item For every $i$, $|cc(Y_i)|\leq y$.
    \item For every $i$, every component of $Y_i$ has size at most $p$.
    \item Every vertex of $G$ is contained in at most $z$ of the sets $X_1',\ldots,X_x'$.
\end{enumerate}
The goal of this section is to show that every mineable pair (with appropriately chosen parameters) in a $K_{t,t}$-induced-minor-free graphs  can be separated by a barrier with certain properties:
\begin{theorem}
\label{thm:mineabletobarrier}
    There exists a function $\phi : \mathbb{N} \rightarrow \mathbb{N}$ such that the following holds.
    Let $t,p,x,y,z \in \nat$.
    Let $G$ be an $n$-vertex $K_{t,t}$-induced-minor-free graph and let $(a,b)$ be  an $(x,y,z,p)$-mineable pair in $G$. Then there exist disjoint subsets $X,Y,Z,C,M$ of $V(G)\sm \set{a,b}$ such that
\begin{enumerate}
    \item $B=(X,Y,Z,C)$ is a $(t,p)$-barrier in $G\sm M$ that separates $a$ from $b$.
    \item $|cc(C)|\leq \frac{100}{99} (4z+2t) \phi(t)yt$.
    \item $|M|\leq xyp$.
    \item $|X\cup Y\cup Z| \leq \frac{100n (4z+2t)^2}{x}$.
\end{enumerate}
\end{theorem}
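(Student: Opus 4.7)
\textbf{Proof plan for \cref{thm:mineabletobarrier}.} The plan is to take $M$ (essentially) to be $\bigcup_{i=1}^x Y_i$, which gives $|M|\leq xyp$ immediately from conditions~4 and~5 of mineability. A small subset will later be moved from $M$ into $C$, but this does not affect the size bound on $M$.

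\emph{Step 1: each $X_i$ separates $a$ from $b$ in $G\setminus M$.} If $P$ were an $a\dd b$-path in $(G\setminus M)\setminus X_i$, then $P$ avoids $\bigcup_{j<i}Y_j$ and avoids $X_i\cup Y_i=X_i'$, contradicting \ref{prop:separators}. Thus in $G\setminus M$ we have $x$ pairwise-anticomplete small ``witness sets'' $Y_1,\dots,Y_x$, each dominating an $a\dd b$-separator $X_i$.

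\emph{Step 2: averaging to pick two separators with a thin strip between them.} Using \ref{prop:separatorsanticomplete} together with $\sum_i|X_i'|\leq zn$ (from condition 6) we can, by a pigeonhole/averaging argument over consecutive indices $i<j$, find a pair $(i_0,j_0)$ such that the ``strip'' $R$ consisting of the union of components of $(G\setminus M)\setminus(X_{i_0}\cup X_{j_0})$ that meet both $X_{i_0}$ and $X_{j_0}$ together with $X_{i_0}\cup X_{j_0}$ has $|R|\leq \frac{100n(4z+2t)^2}{x}$. Setting $X:=X_{i_0}$, $Z:=X_{j_0}$ and $Y:=R\setminus(X\cup Z)$, property~\ref{prop:barrier-interior-path} of a barrier is automatic, $|X\cup Y\cup Z|\leq \frac{100n(4z+2t)^2}{x}$, and $X\cup C$, $Z\cup C$ each separate $a$ from $b$ since $X,Z$ already do so on their own.

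\emph{Step 3: choosing $C$ so that every $X\dd Z$ path sees at least $t$ components.} This is the heart of the proof, and where \cref{bipartite lemma} enters. Consider the components of the $Y_k$'s that lie in the strip $R$; call this collection $\mathcal{Y}$. Each $v\in X_k\cap R$ has a neighbor in some member of $\mathcal{Y}$ (since $X_k\subseteq N(Y_k)$), so if an $X\dd Z$-path $P$ passes through a vertex of $X_k$ then $N(P)$ hits some component of $Y_k$. The task is to extract a subfamily $C\subseteq\bigcup_{\mathcal{C}\in\mathcal Y}\mathcal{C}$ of bounded size with the property that \emph{every} $X\dd Z$ path in $G\setminus(M\cup C)$ hits at least $t$ components of $C$. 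We build the auxiliary bipartite graph $B$ whose parts are $Y$ (paths live in here) on one side and (twin-reduced) representatives of $\mathcal Y$ on the other, with adjacencies inherited from $G$. Since $G$ is $K_{t,t}$-induced-minor-free, so is $B$; after identifying twin components (for which we may harmlessly delete all but one representative since duplicates play the same role for any path), \cref{bipartite lemma} gives $|E(B)|\leq f(t)|Y|$. A greedy ``hitting-set'' / pigeonhole selection now produces $C$ of cardinality $\frac{100}{99}(4z+2t)\phi(t)yt$ such that only a small fraction of $X\dd Z$ paths could see fewer than $t$ components; combining with the fact that a surviving such path would violate the degeneracy bound gives the required $t$-hitting property. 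Finally replace $M$ by $M\setminus C$ so that $C,M$ are disjoint.

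\emph{Main obstacle.} The bulk of the work, and where care is required, is Step~3: passing from the linear edge bound of \cref{bipartite lemma} to a guarantee that every $X\dd Z$-path hits at least $t$ components of the chosen $C$. One must combine the degeneracy information (bounded average degree in $B$) with the separator structure (every $X\dd Z$-path enters several $X_k$'s within the strip) to run an averaging argument whose losses align with the stated constants $\frac{100}{99}(4z+2t)\phi(t)yt$ and $\frac{100n(4z+2t)^2}{x}$. The function $\phi$ in the conclusion will be essentially the $f$ provided by \cref{bipartite lemma}, together with a constant depending on how many separators must simultaneously be hit.
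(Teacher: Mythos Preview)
Your proposal has a genuine gap in Step~3, and it stems from misidentifying the role of \cref{bipartite lemma}. In the paper, the $(t,p)$-barrier property~\ref{prop:barrier-see-t-comps} (every $X\dd Z$ path sees $\geq t$ components of $C$) is \emph{not} obtained from the bipartite lemma at all. Instead, the paper first sets $C=\bigcup_i Y_i$ and runs a \emph{distance-layering} argument: define $d_a(v)$ as the minimum number of $X_i$'s that an $a\dd v$ path in $G\setminus C$ must meet, and let each barrier be a block of roughly $2z+t$ consecutive layers. A path crossing such a block is then forced, by the approximate triangle inequality for $d_a$, to meet at least $t$ of the $X_i$'s and hence at least $t$ components of $C$. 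This yields about $\frac{x}{4z+2t}$ pairwise anticomplete barriers, each already a $(t,p)$-barrier for the \emph{full} $C$, and averaging over these disjoint barriers gives the size bound $\frac{100n(4z+2t)^2}{x}$. Only \emph{then} does the bipartite lemma enter: applied to the bipartite graph between (reduced) components of the many barriers and components of $C$, it shows that some single barrier touches only $\frac{\phi(t)|cc(C)|t}{\beta}$ components of $C$; keep those as $C'$ and move the rest into $M$.

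Your plan reverses this. You pick a \emph{single} pair $X_{i_0},X_{j_0}$ first and then hope to manufacture a small $C$ with the $t$-hitting property via \cref{bipartite lemma}. There are two problems. First, nothing in your Step~2 forces an $X_{i_0}\dd X_{j_0}$ path with interior in the strip to pass through vertices of several other $X_k$'s: the $X_i$'s need not be nested (condition~6 allows $z$-fold overlap), and an $X\dd Z$ path need not go ``from the $a$-side to the $b$-side'', so it may avoid the intermediate separators entirely. Second, even granting that, a linear edge bound from \cref{bipartite lemma} controls \emph{average} incidence between strip vertices and components of $\bigcup Y_k$; it does not yield a bounded-size family $C$ that every path hits $t$ times. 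Your ``greedy hitting-set / pigeonhole'' sketch does not bridge this gap, and the claimed constants do not fall out of it. The fix is exactly the paper's two-lemma structure: produce many anticomplete barriers via distance layers (so property~\ref{prop:barrier-see-t-comps} is automatic with the large $C$), then use the bipartite lemma to select one whose reduced components touch only a small $C'\subseteq C$.
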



We start with a lemma, which roughly states that the property of $(a,b)$ being $(x,y,z,p)$-mineable implies that we can find a sufficiently large number of small, pairwise anticomplete $(t,p)$-barriers in $G$ each of which separate $a$ from $b$.

\begin{lemma} \label{lemma: mines to barriers}
    Let $G$ be a graph with $n = |V(G)|$, let $x,y,z,t \in \nat$, and let $w = \lceil \frac{99}{100} \frac{x}{4z + 2t} \rceil$. Let $a,b \in V(G)$ and assume that $(a,b)$ is $(x,y,z,p)$-mineable in $G$. Then there exists a set $C \subseteq V(G) \setminus \set{a,b}$ with $|cc(C)| \leq xy$ and a collection $\mathcal{B} = \set{B_i = (I_i,J_i,K_i,C)}_{i \in [w]}$ of $(t,p)$-barriers, such that
    \begin{itemize}
        \item For every $i$, $(I_i,J_i,K_i,C)$ separates $a$ from $b$.
        \item For every $i$, $|I_i \cup J_i \cup K_i| \leq \frac{100n(4z + 2t)^2}{x}$.
        \item For all distinct $i,j \in [w]$, $B_i$ and $B_j$ are anticomplete.
        \end{itemize}
\end{lemma}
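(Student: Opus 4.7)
The plan is to set $C := \bigcup_{i=1}^x Y_i$, so that by mineability properties (3)--(5) we have $|cc(C)| \leq xy$ with every component of size at most $p$; the barriers will then be obtained by slicing $G' := G \sm C$ into ``layers'' defined via the separators. For each $i \in [x]$, set $\hat X_i := X_i \sm C$. This set is nonempty (otherwise $C$ alone would separate $a$ from $b$ in $G$, contradicting property (2)), and since $X_i \cup C$ is an $a \dd b$-separator in $G$, the set $\hat X_i$ is an $a \dd b$-separator in $G'$. Let $A_i$ be the $a$-component of $G' \sm \hat X_i$ and $B_i := V(G') \sm (A_i \cup \hat X_i)$, and define the level function $\ell\colon V(G') \to \set{0, \dots, x}$ by $\ell(v) := |\set{i : v \in B_i}|$, so that $\ell(a) = 0$ and $\ell(b) = x$.

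The key observation is that $\ell$ is $z$-Lipschitz on $G'$: if $uv \in E(G')$ and $u \in B_i$ while $v \notin B_i \cup \hat X_i$, then $v \in A_i$, so the edge $uv$ joins $A_i$ and $B_i$ in $G' \sm \hat X_i$, a contradiction. Therefore $\set{i : u \in B_i} \sm \set{i : v \in B_i} \subseteq \set{i : v \in \hat X_i}$, which has size at most $z$ by mineability property (6). Set $k := 4z + 2t$, partition $\set{0, \dots, x}$ into consecutive intervals of length $k$, and for each interior block $q$ define
$$L_q := \set{v \in V(G') : \ell(v) \in [qk + z, (q+1)k - z)},$$
partitioned into its $z$ lowest layers $I_q$, middle $2t$ layers $J_q$, and $z$ highest layers $K_q$. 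The Lipschitz property implies that distinct $L_q, L_{q'}$ are anticomplete in $G'$, since their nearest layers differ by at least $2z + 1 > z$. Applying Markov's inequality to $\sum_q |L_q| \leq n$ yields at least $w$ blocks with $|L_q| \leq 100n(4z + 2t)/x \leq 100 n (4z + 2t)^2/x$, and these give the desired barriers.

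It remains to verify that each such $B_q = (I_q, J_q, K_q, C)$ is a $(t, p)$-barrier separating $a$ from $b$. Condition (3) is immediate from mineability (5), and both $I_q \cup C$ and $K_q \cup C$ separate $a$ from $b$ in $G$, since by the Lipschitz property every $a \dd b$-path in $G'$ must meet any band of $z$ consecutive layers strictly inside $[1, x-1]$. For barrier property (1), given an $I_q \dd K_q$ path $P = v_0, \dots, v_m$ in $G'$ with $v_0 \in I_q, v_m \in K_q$, let $s$ be the largest index with $v_s \in I_q$ and $r$ the smallest index $> s$ with $v_r \in K_q$; an induction using the Lipschitz property shows that $v_{s+1}, \dots, v_{r-1}$ all lie in $J_q$, because after $v_s$ the path cannot have $\ell < qk + z$ without re-entering $I_q$, and before $v_r$ it cannot reach $\ell \geq (q+1)k - z$ without first passing through $K_q$. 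For property (2), the subpath $v_s \dd v_r$ satisfies $\ell(v_r) - \ell(v_s) \geq 2t + 1$, and for each $i$ with $v_s \notin B_i$ and $v_r \in B_i$ the subpath must cross $\hat X_i \subseteq N(Y_i)$; since the $Y_i$'s are pairwise disjoint and anticomplete, this contributes at least $2t + 1 \geq t$ distinct components of $C$ adjacent to $P$. The main obstacle is verifying the induction underlying property (1): the choice $k = 4z + 2t$ is calibrated precisely so that both the ``no dipping'' and ``no rising'' estimates hold uniformly in $z$ and $t$.
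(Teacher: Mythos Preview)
Your proof is correct and follows essentially the same distance-layering strategy as the paper. The one substantive variation is the choice of level function on $G' = G \setminus C$: the paper defines $d_a(v)$ as the minimum number of sets $X_i$ that an $a \dd v$-path in $G'$ must meet, whereas you use the potential $\ell(v)=|\{i:v\in B_i\}|$ counting how many of the separators $\hat X_i$ place $v$ on the $b$-side. Both functions are $z$-Lipschitz along edges of $G'$ (the paper via a triangle-inequality argument, you via the containment $\{i:u\in B_i\}\setminus\{i:v\in B_i\}\subseteq\{i:v\in\hat X_i\}$), so both yield the same slicing into bands and the same verification of barrier conditions~(1)--(3) and of separation. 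For condition~(2) the paper bounds $\psi(P)$ directly from $d_a$, while you instead observe that each index in $\{i:v_r\in B_i,\,v_s\notin B_i\}$ forces the subpath to meet $\hat X_i\subseteq N(Y_i)$; these are two phrasings of the same counting. The paper achieves anticompleteness by keeping only every second barrier, while you build a $z$-buffer into the definition of $L_q$ from the start; the resulting number of usable blocks and the Markov averaging step are the same. Your potential $\ell$ is arguably cleaner to set up (no minimization over paths, Lipschitz is immediate), at the cost of a slightly less explicit link between levels and membership in $\bigcup_i X_i$ that the paper's $S_j$ layers provide.
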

See \cref{fig: many barriers} for an illustration of the outcome of \cref{lemma: mines to barriers}.
\begin{figure}[h]
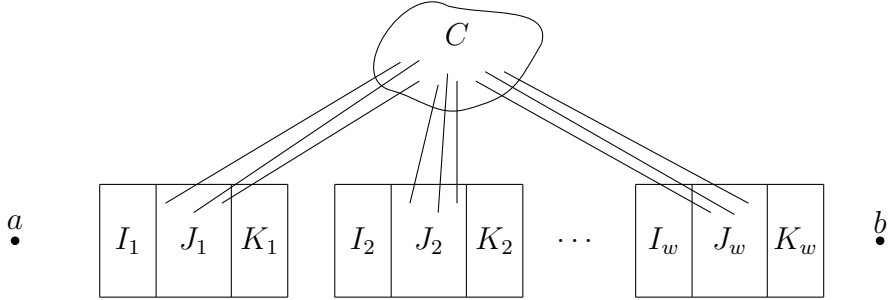

    \centering
    \scalebox{1}{\tikzfig{many_barriers}}
    \caption{Visualization of the result of \cref{lemma: mines to barriers}. }
    \label{fig: many barriers}
\end{figure}

The proof of Lemma~\ref{lemma: mines to barriers} proceeds by a ``distance layering'' argument. 
We obtain a set $C$ and $a$-$b$ separators $X_1, \ldots, X_x$ in $G \sm C$ using that $(a,b)$ is mineable.
We define the ``distance from $a$'' for every vertex $v$ to be the number of separators $X_j$ we need to pass through in order to get from $a$ to $v$ in $G \sm C$. This acts much like a distance function, with an approximate triangle inequality and the property that walking along any edge changes the distance by at most $z$. This lets us define ``distance layers'' as all vertices that have a certain distance from $a$. Our barriers will be unions of not too many consecutive distance layers. The bound on the size of the layers follows from an averaging argument.

\begin{proof}
    Let $Y_1,\dots,Y_x$ and $X_1,\ldots,X_x$ be sets as in the definition of $(x,y,z,p)$-mineable, with $X_i' = Y_i \cup X_i$ for each $i$. Let $C = \bigcup_{i=1}^x Y_i$; observe that $|cc(C)| \leq xy$ as $|cc(Y_i)| \leq y$ for each $i$. For each subset $W \subseteq V(G) \sm C$, let $\psi(W) = \lvert \set{i \colon W \cap X_i \neq \emptyset}\rvert$. For every vertex $v\in G\sm C$ we define
\[ d_a(v):= \min_{a \dd v\text{-path $P$ in } G\sm C} \psi(P), \]
where $d_a(v) = \infty$ if there is no $a \dd v$ path in $G \sm C$. For $j \in \NN$ we denote by $S_j$ the set of vertices $v \in \bigcup_{i=1}^x X_i$ such that $d_a(v) = j$. 


The following properties regarding $\psi$ and $d_a$ are immediate from the definitions, and are used implicitly in the analysis that follows.
\begin{itemize}
    \item \label{lem:dist-props:triangle} For $W,W' \subseteq V(G) \sm C$ we have $\psi(W \cup W') \leq \psi(W) + \psi(W')$. In particular, for distinct $v_1,v_2 \in V(G)$ and a path $P$ with ends $v_1$ and $v_2$, we have $d_a(v_2) \leq d_a(v_1) + \psi(P)$.
    \item \label{lem:dist-props:walk} If $W$ is a walk from $a$ to $v$ in $G \sm C$, then $d_a(v) \leq \psi(W)$.
    \item \label{lem:dist-props:S} If $v_1,v_2 \in V(G) \sm C$ are adjacent and $d_a(v_1) < d_a(v_2)$, then 
    $v_{2} \in S_k$ for some $k \in [d_a(v_1),d_a(v_1) + z]$.
\end{itemize}

    We now define $m = \left\lfloor \frac{x}{2z + t} \right\rfloor$, and for $j \in [m]$ we let $i_j^- = (j-1)(2z + t) + 1$ and $i_j^+ = i_j^- + z$. Write $W_j = \bigcup_{k=i_j^-}^{i_j^+} S_k$.  Then the sets  $W_j$ are pairwise disjoint. We let 
\[ L_{j} = \set{v \in G \sm \left(C \cup W_j \cup W_{j+1}\right) \colon i_j^- \leq d_a(v) < i_{j+1}^-}. \]

Next we prove the following.

\sta{\label{claim:Bj is a barrier} For $j \in [m - 1]$, $B_j = (W_j,L_j,W_{j+1},C)$ is a $(t,p)$-barrier.}

Recall that, for each $j$, no component of $Y_j$ has more than $p$ vertices. As $C$ is the union of the sets $Y_j$, and the sets  $Y_j$ are pairwise anticomplete, it follows that $B_j$ satisfies condition (\ref{prop:barrier-component-size-p}) in the definition of a $(t,p)$-barrier.

Next, we show that $B_j$ satisfies condition (\ref{prop:barrier-interior-path}) in the definition of a $(t,p)$-barrier. Let $P$ be a $W_j \dd W_{j+1}$ path in $G \setminus C$; we show that there is a subpath $P'$ of $P$ that is a $W_j \dd  W_{j+1}$ path with $(P')^* \in L_j$. If $P$ consists of a single edge then the statement holds trivially, so we may assume this is not the case. Write $P = v_1 \dd v_2 \dd \ldots \dd v_r$. If $d_a(v_i) \geq i_j^-$ for every $1 \leq i \leq r$, let $\alpha = 1$, otherwise let $\alpha = \max \set{i \colon 1 \leq i \leq r, \; d_a(v_i) < i_j^-} + 1$. Note that $\alpha \leq n$ as $d_a(v_n) \geq i_{j+1}^- > i_j^-$ since $P$ is a $W_j \dd W_{j+1}$ path.

We show that $v_\alpha \in W_j$. If $\alpha = 1$ then this is true because $P$ is a $W_j \dd  W_{j+1}$ path. Otherwise, by the choice of $\alpha$, we have $d_a(v_{\alpha - 1}) < d_a(v_{\alpha})$, so $v_\alpha \in S_k$ for some $k \in [d_a(v_{\alpha - 1}), d_a(v_{\alpha - 1}) + z]$. Furthermore, by the choice of $\alpha$ we have $v_\alpha \geq i_j^-$, and since $d_a(v_{\alpha - 1}) < i_j^-$ we have $d_a(v_{\alpha - 1}) + z < i_j^+$. Thus $k \in [i_j^-,i_j^+]$, so $v_\alpha \in W_j$.

We further observe that $\alpha < r$, since $d_a(v_\alpha) \leq i_j^+ < i_{j+1}^- \leq d_a(v_r)$. We may thus define $\beta = \min \set{i \colon \alpha < i \leq r, d(v_i) \geq i_{j+1}^-}$. An argument analogous to the previous paragraph shows that $v_\beta \in W_{j+1}$. 

It follows immediately from the definitions of $\alpha$ and $\beta$ that 
    $i_j^- \leq d_a(v_i) < i_{j+1}^-$
    for each $i$ such that $\alpha < i < \beta$. 
    In particular, we have that 
    $v_{\alpha} \dd P \dd v_{\beta}$
    is a $W_j \dd  W_{j+1}$ path contained in $W_j \cup L_j \cup W_{j+1}$.
    It follows that $v_{\alpha} \dd P \dd v_\beta$ contains a subpath (possibly itself) that is a $W_j \dd W_{j+1}$ path with interior in $L_j$. Thus, $B_j$ satisfies condition (\ref{prop:barrier-interior-path}) in the definition of a $(t,p)$-barrier.

    It remains to check condition (\ref{prop:barrier-see-t-comps}) in the definition of a $(t,p)$-barrier.
    In view of the first condition, 
    it is enough to show that every  $W_j \dd  W_{j+1}$ path $P$ with $P^* \subseteq L_j$ satisfies the second condition. Observe that if $\psi(P) \geq t$, then the second condition is satisfied as the sets $Y_i$ are pairwise anticomplete. 
    
    We now show that  $\psi(P) \geq t$.
    Let $u \in W_j$ and $v \in W_{j+1}$ be the ends of $P$, and let $P_1$ be a $a \dd u$-path in $G \sm C$ achieving $\psi(P_1) = d_a(u) \leq i_j^+$. Then appending $P$ to $P_1$ yields an $a \dd v$ walk $W$ in $G \sm C$. We thus have have
    \[ d_a(v) \leq \psi(W) \leq d_a(u) + \psi(P) \leq i_j^+ + \psi(P). \]
    On the other hand, since $v \in W_{j+1}$ we know that $d_a(v) \geq i_{j+1}^- \geq i_j^+ + t$. Consequently,  $i_j^+ + t \leq d_a(v) \leq i_j^+ + \psi(P)$, thus $\psi(P) \geq t$, and condition (\ref{prop:barrier-see-t-comps}) in the definition of a $(t,p)$-barrier is satisfied. This proves (\ref{claim:Bj is a barrier}).

\sta{\label{claim:Bj separates a and b} For $j \in [m-1]$, $B_j$ separates $a$ and $b$.}

It suffices to show that $W_\ell \cup C$ separates $a$ from $b$ for every $\ell \in [m]$. First,  since each $X_k'$ separates $a$ from $b$ in $G \sm C$, it follows that every $a \dd b$ path in $G \sm C$ meets $X_k'$ for all $k \in [x]$,
and therefore $d_a(b) \geq x$.

Now let $\ell \in [m]$, and let $P = v_1 \dd \ldots \dd v_r$ be an $a \dd b$ path in $G \sm C$, where $v_1 = a$ and $v_r = b$.
Observe that by the definition of $i_\ell^-$ it holds that $i_\ell^- < x$. Since $d_a(b) \geq x$, we can define $\alpha \in [r]$ to be maximal such that $d_a(v_\alpha) < i_\ell^-$; note that $\alpha \leq r-1$. We now have that $v_{\alpha + 1} \in S_k$ for some $k \in [d_a(v_\alpha),d_a(v_\alpha) + z]$. By the choice of $\alpha$ we have $k \geq i_j^-$, and since $d_a(v_\alpha) < i_\ell^-$ we have $d_a(v_\alpha) + z < i_\ell^- + z = i_\ell^+$, so $v_{\alpha + 1} \in W_\ell$. Since every $a \dd b$ path $P$ in $G \sm C$ meets  $W_\ell$, it follows that $W_\ell \cup C$ separates $a$ from $b$ in $G$, proving (\ref{claim:Bj separates a and b}).
\\
\\
Finally, we show that a large enough subset of the $B_j$ are pairwise anticomplete and have sufficiently small size. First, we put $m' = \lfloor \frac{m}{2} \rfloor = \lfloor \frac{x}{4z + 2t} \rfloor$ and $\mathcal{B}' = \set{B_{2i} \colon i \in [m']}$.

\sta{\label{claim: barriers anticomplete} For distinct $\alpha, \beta \in [m']$, $B_{2\alpha}$ and $B_{2\beta}$ are anticomplete.}

Assume without loss of generality that $\alpha < \beta$. We have $d_a(v) \leq i_{2\alpha + 1}^+$ for all $v \in (W_{2\alpha} \cup I_{2\alpha} \cup W_{2\alpha + 1})$ and $d_a(v) \geq i_{2\beta}^-$ for all $v \in (W_{2\beta} \cup I_{2\beta} \cup W_{2\beta + 1})$. Since $\alpha < \beta$ we have $2\beta - (2\alpha + 1) > 0$, and thus
    \[ i_{2\beta}^- - i_{2\alpha + 1}^+ = (2\beta - (2\alpha + 1))(2z+t) - z \geq 2z + t - z > 0. \]
    Since $d_a$ takes values at most $i_{2\alpha + 1}$ on vertices in $(W_{2\alpha} \cup L_{2\alpha} \cup W_{2\alpha + 1})$ and values at least $i_{2\beta}^-$ on vertices in $(W_{2\beta} \cup L_{2\beta} \cup W_{2\beta + 1})$, and $i_{2\alpha + 1}^+ < i_{2\beta}^-$, we conclude that $B_{2\alpha}$ and $B_{2\beta}$ are disjoint. Furthermore, suppose that $v_\alpha \in (W_{2\alpha} \cup L_{2\alpha} \cup W_{2\alpha + 1})$ and $v_\beta \in (W_{2\beta} \cup L_{2\beta} \cup W_{2\beta + 1})$ are adjacent. Then we have $v_\beta \in S_k$ for some $k \in [d_a(v_\alpha),d_a(v_\alpha) + z]$,  and consequently $d_a(v_\beta) \in [d_a(v_\alpha),d_a(v_\alpha) + z]$. But $d_a(v_\alpha) + z < i_{2\beta}^- \leq d_a(v_\beta)$, a contradiction. It follows that $B_{2\alpha}$ and $B_{2\beta}$ are anticomplete, proving (\ref{claim: barriers anticomplete}).
    \\
    \\
    We now define 
    \[ \mathcal{B}'' = \set{B_{2i} \colon i \in [m'], \; |W_{2i} \cup L_{2i} \cup W_{2i + 1}| > \frac{100n(4z + 2t)^2}{x}} \subseteq \mathcal{B}' \]
    and write $\mathcal{B} = \mathcal{B}' \sm \mathcal{B}''$, so that for each $B_{2i} \in \mathcal{B}$ it holds that $|W_{2i} \cup L_{2i} \cup W_{2i + 1}| \leq \frac{100n(4z + 2t)^2}{x}$.

    \sta{\label{claim:enough small barriers} It holds that $|\mathcal{B}| \geq \frac{99}{100} \frac{100n(4z + 2t)^2}{x}$.}

    Since the $B_{2i}$ (for $i \in [m']$) are pairwise disjoint, we have $\sum_{i = 1}^{m'} |W_{2i} \cup I_{2i} \cup W_{2i + 1}| \leq n$. Thus, we also have $\sum_{B_{2i} \in \mathcal{B}''} |W_{2i} \cup L_{2i} \cup W_{2i + 1}| \leq n$. Since 
    \[ \sum_{B_{2i} \in \mathcal{B}''} |W_{2i} \cup L_{2i} \cup W_{2i + 1}| > |\mathcal{B}''|\left(\frac{100n (4z + 2t)^2}{x} \right), \]
    we have $|\mathcal{B}''| < \frac{x}{100(4z+2t)^2} \leq \frac{x}{100(4z+2t)}$. Letting $\mathcal{B} = \mathcal{B}' \setminus \mathcal{B}''$, we have
    \[ |\mathcal{B}| = |\mathcal{B}'| - |\mathcal{B}''| > \left\lfloor \frac{x}{4z + 2t} \right\rfloor - \frac{x}{100(4z + 2t)}, \]
    and thus $|\mathcal{B}| \geq \frac{99}{100} \frac{x}{4z + 2t}$, proving (\ref{claim:enough small barriers}).
\\
\\
    We may now arbitrarily remove elements from $\mathcal{B}$ so that it has size $\lceil \frac{99}{100} \frac{x}{4z + 2t} \rceil$, completing the proof.
\end{proof}

Next we show that in the collection of barriers produced by \cref{lemma: mines to barriers} we can choose one with vertex set anticomplete to almost all components of $C$.
\begin{lemma} \label{thm: from many barrier one is good}
    There exists a function $\phi : \mathbb{N} \rightarrow \mathbb{N}$ such that the following holds.
    Let $\beta,p,t\in \nat$. Let $G$ be an $n$-vertex $K_{t,t}$-induced-minor-free graph and let $a,b\in V(G)$. Let $\mathcal{B} = \set{B_i = (X_i,Y_i,Z_i,C)}_{i=1}^\beta$ be a family of pairwise anticomplete $(t,p)$-barriers separating $a$ from $b$.
    Then there exists $C'\subseteq C$ with $|cc(C')|\leq \frac{\phi(t)|cc(C)|t}{\beta}$, $i^*$ and $X_{i^*}'\subseteq X_{i^*},Y_{i^*}\subseteq Y_{i^*}',Z_{i^*}'\subseteq Z_{i^*},$ such that $(X_{i^*}',Y_{i^*}',Z_{i^*}',C')$ is a $(t,p)$-barrier in $G\sm M$ where $M=C\sm C'$.
\end{lemma}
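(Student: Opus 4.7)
The plan is to reduce the problem to a bipartite incidence problem at the level of the connected components of the $F_i$'s (``sub-barriers''), and to apply \cref{bipartite lemma}. For each $i\in[\beta]$ write $F_i := X_i\cup Y_i\cup Z_i$, and let $\Omega$ be the disjoint union over $i$ of the connected components of the induced subgraph $G[F_i]$; call its elements sub-barriers. Any two distinct sub-barriers are pairwise anticomplete in $G$: two in the same $F_i$ because they are distinct components of an induced subgraph, and two from different $F_i$'s by the pairwise-anticomplete hypothesis on $\mathcal{B}$. Form the bipartite graph $H^\ast$ whose vertex classes are $\Omega$ and $cc(C)$, placing an edge between $\omega$ and $\mathcal{C}$ whenever some vertex of $\omega$ is $G$-adjacent to some vertex of $\mathcal{C}$.

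I would first show that $H^\ast$ is $K_{t,t}$-induced-minor-free. Indeed, given any $K_{t,t}$-IM in $H^\ast$ one can lift each branch set to the union in $V(G)$ of the vertex sets of the sub-barriers and components of $C$ it contains. Each sub-barrier and each component of $C$ is connected in $G$, and consecutive pieces of a branch set are joined by a $G$-edge coming from an $H^\ast$-edge, so each lifted branch set is connected; moreover, the only possible $G$-edges between distinct lifted branch sets would have to be cross-side edges encoded in $H^\ast$, which are ruled out by the $H^\ast$-anticomplete structure, while cross-edges realizing the $K_{t,t}$ pattern do lift. Hence a $K_{t,t}$-IM in $H^\ast$ would yield one in $G$, contradicting the hypothesis.

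Call a sub-barrier $\omega\subseteq F_i$ \emph{useful} if it contains an $X_i \dd Z_i$ path with interior in $Y_i$. Since such a path is connected and thus lies in a unique component of $G[F_i]$, properties (\ref{prop:barrier-interior-path}) and (\ref{prop:barrier-see-t-comps}) of a $(t,p)$-barrier force every useful $\omega$ to satisfy $\deg_{H^\ast}(\omega)\geq t$. I then claim that every $H^\ast$-twin class contained in the useful sub-barriers has fewer than $t$ elements: otherwise, $t$ pairwise anticomplete useful sub-barriers $\omega_1,\dots,\omega_t$ would share a common neighborhood $N\subseteq cc(C)$ of size at least $t$, and choosing $t$ components of $C$ from $N$ yields branch sets realizing a $K_{t,t}$-IM in $G$, contradicting the hypothesis.

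Let $\Omega^\circ$ consist of one representative per $H^\ast$-twin class of useful sub-barriers, so that the induced bipartite subgraph $H^\ast[\Omega^\circ\cup cc(C)]$ is $K_{t,t}$-induced-minor-free and has no twins on the $\Omega^\circ$-side. \cref{bipartite lemma} then gives $\sum_{\omega\in\Omega^\circ}\deg_{H^\ast}(\omega)\leq f(t)\,|cc(C)|$ for the function $f$ from that lemma, and multiplying by the twin-class bound yields $\sum_{\omega\in\Omega^{\text{useful}}}\deg_{H^\ast}(\omega)\leq (t-1)f(t)|cc(C)|$. Averaging over the $\beta$ barriers gives some $i^\ast$ for which the useful sub-barriers of $F_{i^\ast}$ have total $H^\ast$-degree at most $(t-1)f(t)|cc(C)|/\beta$. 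Taking $C'$ to be the union of all components of $C$ adjacent (in $G$) to some useful sub-barrier of $F_{i^\ast}$, and setting $(X'_{i^\ast},Y'_{i^\ast},Z'_{i^\ast}):=(X_{i^\ast},Y_{i^\ast},Z_{i^\ast})$, the bound $|cc(C')|\leq\phi(t)\,t\,|cc(C)|/\beta$ follows with $\phi(t):=f(t)$; property (\ref{prop:barrier-interior-path}) and the component-size bound are inherited from $B_{i^\ast}$, and property (\ref{prop:barrier-see-t-comps}) holds because any $X_{i^\ast}\dd Z_{i^\ast}$ path in $(G\sm M)\sm C'=G\sm C$ contains, by (\ref{prop:barrier-interior-path}), an admissible path lying in some useful $\omega\subseteq F_{i^\ast}$, and the $\geq t$ components of $C$ it meets are all in the $H^\ast$-neighborhood of $\omega$, hence in $C'$. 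The principal obstacle is that the $F_i$'s may be disconnected, so that a $K_{t,t}$ appearing in an $F$-based incidence bipartite graph does not obviously lift to $G$; refining to the sub-barrier level resolves this and, combined with the usefulness-based control on twin classes, supplies the linear-in-$|cc(C)|$ edge bound that powers the averaging step.
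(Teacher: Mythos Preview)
Your proposal is correct and follows essentially the same approach as the paper: form the bipartite incidence graph between connected components of the $F_i$'s and the components of $C$, observe it is an induced minor of $G$, bound twin classes using the degree-$\geq t$ property, apply \cref{bipartite lemma}, and average over $\beta$ to find $i^*$. The one cosmetic difference is that the paper first applies \cref{lemma: reducing barriers} to each $B_i$ so that \emph{every} component of $X_i'\cup Y_i'\cup Z_i'$ meets both $X_i'$ and $Z_i'$ (and hence has $\Gamma$-degree $\geq t$), whereas you keep the original barriers and restrict the edge count to the ``useful'' sub-barriers; correspondingly, the paper outputs the reduced triple $(X_{i^*}',Y_{i^*}',Z_{i^*}')$ while you output the full $(X_{i^*},Y_{i^*},Z_{i^*})$. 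These are interchangeable: your final verification of property~(\ref{prop:barrier-see-t-comps}) via ``pass through an admissible subpath inside a useful $\omega$'' is exactly what reducing the barrier would have pre-arranged.
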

See \cref{fig: one good barrier out of many} for an illustration of the outcome of \cref{thm: from many barrier one is good}.
\begin{figure}[h]
    \centering
    \scalebox{1}{\tikzfig{one_good_barrier_out_of_many}}
    \caption{Visualization of \cref{thm: from many barrier one is good}. }
    \label{fig: one good barrier out of many}
\end{figure}

\begin{proof} 
    Let $\phi$ be  the function $f$  from \cref{bipartite lemma}. Let $\mathcal{B}' = \set{B_i' = (X_i',Y_i',Z_i',C)}_{i=1}^\beta$ be the family of reduced pairwise anticomplete $(t,p)$-barriers obtained by applying \cref{lemma: reducing barriers} on each member of $\mathcal{B}$.

    Let $\mathcal{D} =cc\left( \bigcup_{i\leq \beta} (X_i'\cup Y_i'\cup Z_i')\right)$ and $\mathcal{C}=cc(C)$. Now consider the bipartite graph $\Gamma$ with bipartition $(\mathcal{D},\mathcal{C})$ and where there is an edge from $d\in \mathcal{D}$ to $c\in \mathcal{C}$ if there is an edge from $d$ to $c$ in $G$. $\Gamma$ is an induced minor of $G$ as it can be obtained by deleting every vertex not in $C\cup \bigcup_{D\in \mathcal{D}} D$ and contracting every component of $\mathcal{D}$ and $\mathcal{C}$. Therefore, $\Gamma$ is $K_{t,t}$-induced-minor-free.  Let $\Gamma'$ be the induced subgraph of $\Gamma$  containing exactly one representative for each twin class of vertices in $\mathcal{D}$.

    \sta{$|E(\Gamma')| \geq \frac{|E(\Gamma)|}{t} $ \label{claim: small twin class}}

    The second condition of the definition of a $(t,p)$-barrier together with the fact that all barriers in $\mathcal{B}'$ are reduced imply that
    $\deg_\Gamma(d)\geq t$ for every $d  \in \mathcal{D}$. It follows that 
    every twin class in $\mathcal{D}$ contains fewer than $t$ elements, as otherwise, there is a $K_{t,t}$-induced-minor in $G$.
    This proves \eqref{claim: small twin class}.
\\
\\
    The graph $\Gamma'$ satisfies the assumptions of  \cref{bipartite lemma}, and therefore  $|E(\Gamma')| \leq \phi(t,k) |\mathcal{C}|$.
    By \eqref{claim: small twin class}, $|E(\Gamma)| \leq \phi(t,k) |\mathcal{C}|t$. 
    Therefore, there is  a $(t,p)$-barrier $B_{i^*}'$ and a set $\mathcal{C'} \subseteq \mathcal{C}$ such that $|\mathcal{C}'|\leq \frac{\phi(t) |\mathcal{C}|t}{\beta}$ and $N(X_{i^*}'\cup Y_{i^*}'\cup Z_{i^*}') \subseteq \bigcup_{c \in \mathcal{C'}}c$. Let $C' =  \bigcup_{c \in \mathcal{C'}}c$. Noting that each  component of $C'$ is  a component of $C$ and thus contains at most $p$ vertices, we deduce that  
     $(X_{i^*}',Y_{i^*}',Z_{i^*}',C')$ is a $(t,p)$-barrier in $G\sm M$ where $M=C\sm C'$ as required.
\end{proof}

Now we summarize what we have shown so far to prove the main result of this section:
    \begin{proof}[Proof of \cref{thm:mineabletobarrier}.]
        Let $\phi$ be defined as in \cref{thm: from many barrier one is good}.  
        The assertion of \cref{thm:mineabletobarrier}
        follows by combining the family of $(t,p)$-barriers obtained in \cref{lemma: mines to barriers} with \cref{thm: from many barrier one is good}.
    \end{proof}

\section{The class $\mathcal{C}_t$ is slim}\label{sec:slim}

The goal of this section is to prove the following:
\begin{theorem} \label{thm:slim}
  For every $t \in \nat$ there exist $p,q \in \nat$ such that
  the class  $\mathcal{C}_{t}$ is $(p,q)$-slim.
  \end{theorem}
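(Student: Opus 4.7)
The plan is to argue by contraposition: we exhibit $p=p(t)$ and $q=q(t)$ such that, for any $G \in \mathcal{C}_t$ and any stable set $S \subseteq V(G)$ with $|S| \geq p$, some pair $\{a,b\} \subseteq S$ is $q$-slim. Suppose toward a contradiction that every pair in $S$ is $q$-wide, so that each pair admits $q$ pairwise internally anticomplete induced $a \dd b$-paths. From this hypothesis we intend to build a forbidden induced minor inside $G$.

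First I would package all of these paths into an auxiliary object (a graph recording how the interiors of paths between different pairs intersect and interact) and apply the lemma essentially from \cite{TW17}. That lemma says, roughly, that if a large stable set has many pairwise internally anticomplete paths between every pair, then after passing to a still large sub-stable-set $S' \subseteq S$ one can select a uniform system of induced paths among the pairs of $S'$: in the Ramsey-cleaned system, the interiors of paths from distinct pairs interact in a prescribed structured fashion (either pairwise anticomplete across pairs, or sharing common contact vertices in a regular way, etc.). The choice of $q$ is dictated by the number of paths per pair that this lemma requires in order to perform its extraction.

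Next, I would feed the resulting clean system into the main theorem of \cite{TW16}, which takes such a uniform collection of induced paths among a stable set and produces inside $G$ either a large induced subdivision of $K_{t,t}$ or a large induced subdivision of the hexagonal wall $W_{t \times t}$. Contracting the subdivision paths to single edges realizes $K_{t,t}$ or $W_{t \times t}$ as an induced minor of $G$, contradicting $G \in \mathcal{C}_t$.

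The parameters are chosen backwards: first the subdivision size required by \cite{TW16} to secure $K_{t,t}$ or $W_{t\times t}$, then the parameters of the \cite{TW17} lemma so that its output meets this size, and finally $p,q$ large enough to satisfy the hypothesis of that lemma. I expect the main obstacle to be bookkeeping rather than conceptual novelty: the $q$ paths coming from $q$-wideness are only guaranteed to be internally anticomplete \emph{within} a single pair, so one must iteratively prune and re-extract so that the cross-pair anticompleteness needed by the \cite{TW16} output (which must produce an actual induced subdivision, not merely a minor model) is preserved throughout the Ramsey extraction.
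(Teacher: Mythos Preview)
Your high-level plan (argue by contradiction from a stable set where every pair is $q$-wide, invoke the block lemma from \cite{TW17}, then feed the outcome to the structure theorem of \cite{TW16}) matches the paper. But your description of both cited results is off in ways that hide the one genuine technical step.

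First, the lemma from \cite{TW17} (Lemma~\ref{lem:comp_model_rigid} here) does not perform the Ramsey-style cross-pair cleaning you sketch. Its hypothesis already demands a \emph{strong} block: for distinct pairs $\{x,y\}\neq\{x',y'\}$, the interiors of the chosen paths must be pairwise disjoint. Its conclusion is a dichotomy --- either an induced proper subdivision of $K_s$, or a linear induced $K_{\rho,\sigma}$-model --- not a ``cleaned system of paths''. So the cross-pair disjointness must be arranged \emph{before} you call the lemma, not produced by it. Your ``iteratively prune and re-extract'' is not how the paper handles this, and it is not clear it would work: pruning a path for one pair can destroy anticompleteness for another.

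The paper's actual device is a vertex-duplication trick: build an auxiliary graph $G'$ by replacing every vertex $v$ in the union of all paths by a clique of copies $v_{\{x,y\}}$, one per pair $\{x,y\}$ for which $v$ lies on some path in $\mathcal Q_{\{x,y\}}$. In $G'$ the block is automatically strong and \cref{lem:comp_model_rigid} applies. The price is that the forbidden configurations are now found in $G'$, not in $G$; the transfer back to $G$ works because the relevant outputs (proper $K_t$-subdivision, $K_{t,t}$, wall subdivision, line graph of a wall subdivision, $(t,t)$-constellation) contain no pair of adjacent twins, hence survive the quotient $G'\to G$. Your outline neither mentions this construction nor the no-adjacent-twins argument needed to descend.

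Second, the conclusion of \cref{thm:TW16main} is not simply ``induced subdivision of $K_{t,t}$ or $W_{t\times t}$''; it has four possible outputs, including the line graph of a wall subdivision and a constellation, and one has to observe separately that each of these yields a $K_{t,t}$- or $W_{t\times t}$-induced-minor.
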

We start with some definitions from \cite{TW17}.
Let $G,H$ be graphs. Let $V(H)=\{v_1, \dots, v_k\}$.
An {\em induced $H$-model in $G$} is a $k$-tuple 
  $K=(C_1,\ldots, C_k)$ of pairwise disjoint connected induced subgraphs of $G$ such that for all distinct $i,j \in \{1, \dots, k\}$, the sets $C_i$ and $C_j$ are anticomplete if an only if $v_i$ is non-adjacent to $v_j$ in $H$.
We say that $K$ is \emph{linear} if every $C_i$ is a path in $G$.

Let $G$ be a graph.
For $k,l \in \nat$, a {\em $(k,l)$-block} in $G$ is a pair $(B, \mathcal{P})$ where $B\subseteq V(G)$ with $|B|\geq k$, and $\mathcal{P}$  is map assigning  to each $2$-subset  $\{x,y\}$ of $B$ a set of at least $l$ pairwise internally disjoint paths in $G$ from $x$ to $y$. We write $\mathcal{P}_{\{x,y\}}=\mathcal{P}(\{x,y\})$.
We denote by $V(\mathcal{P}_{\{x,y\}})$ the union of the interiors of the paths  that are elements of $\mathcal{P}_{\{x,y\}}$.
We say that $(B,\mathcal{P})$ is {\em strong} if for all distinct $2$-subsets $\{x,y\}, \{x',y'\}$ of $B$, we have $V(\mathcal{P})\cap V(\mathcal{P})=\emptyset$; that is, each path $P\in \mathcal{P}_{\{x,y\}}$ is  internally disjoint from each path $P'\in \mathcal{P}_{\{x',y'\}}$.

We need the following result that was essentially proved in \cite{TW17}.
\begin{lemma}\label{lem:comp_model_rigid}
  For all $s,\rho,\sigma \in \nat$ there exist positive integers 
  $f=f(s,\rho,\sigma)$ and $g=g(s,\rho,\sigma)$ with the following property. Let $G$ be a graph and let $(B,\mathcal{Q})$ be a strong
  $(f,g)$-block in $G$ such that  $B$ is a stable set and for every $\{x,y\}\subseteq B$, the paths $(Q^*: Q\in \mathcal{Q}_{\{x,y\}})$ are pairwise anticomplete in $G$. Then one of the following holds.
\begin{enumerate}[(a)]
 \item There is an induced subgraph of $G$ isomorphic to a proper subdivision of $K_s$.
 \item There is a linear induced
   $K_{\rho,\sigma}$-model in $G$. 
\end{enumerate}
\end{lemma}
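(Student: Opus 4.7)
The plan is to follow the strategy of the analogous result in \cite{TW17} with appropriate modifications to guarantee the \emph{linear} variant of outcome (b). I would set $f$ and $g$ to be obtained from nested Ramsey-type bounds depending on $s, \rho, \sigma$ (roughly $g$ depending on Ramsey numbers needed to extract subpaths with prescribed cross-adjacency patterns, and $f$ depending on Ramsey numbers needed to extract a suitable subset of $B$).

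First, for each pair $\{x,y\} \subseteq B$ I would select a single path $P_{xy} \in \mathcal{Q}_{\{x,y\}}$; by the paper's convention, $P_{xy}$ is already an induced path of $G$. Since $(B,\mathcal{Q})$ is strong, the interiors $P_{xy}^*$ are pairwise disjoint across distinct pairs, and since $B$ is stable, the endpoints form a stable set. Thus the skeleton $S := \bigcup_{\{x,y\}\subseteq B} P_{xy}$ is a (not necessarily induced) subdivision of $K_{|B|}$ with branch vertices $B$. The only possible ``bad'' edges obstructing it from being an induced subdivision are: (i) edges from some $z \in B$ to the interior of some $P_{xy}$ with $z \notin \{x,y\}$, or (ii) edges between the interiors of two paths $P_{xy}$ and $P_{x'y'}$ with $\{x,y\} \neq \{x',y'\}$.

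Next I would Ramsey-reduce: classify, for each ordered pair of disjoint two-subsets of $B$, the edge pattern between the corresponding paths, as well as the vertex-to-path adjacency pattern of type (i). Iterated applications of Ramsey's theorem on $B$ (requiring $f$ large as a function of $s, \rho, \sigma$) yield a subset $B' \subseteq B$ on which these patterns are homogeneous. Now I would split into two cases. If on $B'$ no bad edges occur, then $\bigcup_{\{x,y\}\subseteq B'}P_{xy}$ is an induced proper subdivision of $K_{|B'|}$, and choosing $|B'| \geq s$ gives outcome (a). Otherwise, there exist disjoint pairs $\{x,y\}, \{x',y'\} \subseteq B'$ such that some edge runs between $P_{xy}^*$ and $P_{x'y'}^*$ (or an analogous type-(i) adjacency exists, which is handled similarly). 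Here is where the redundancy provided by the full family $\mathcal{Q}_{\{x,y\}}$ of $g$ pairwise internally anticomplete paths becomes essential: working with many available representatives and applying further Ramsey arguments to the vertices of the chosen paths, I would extract $\rho$ pairwise disjoint subpaths of one path and $\sigma$ pairwise disjoint subpaths of the other, lying in internally anticomplete siblings from their respective $\mathcal{Q}$'s, such that every $\rho$-side subpath has an edge to every $\sigma$-side subpath. This yields a linear induced $K_{\rho,\sigma}$-model in $G$, giving outcome (b).

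The main obstacle is precisely this last upgrading step: ensuring that the bipartite complete adjacency can be realized by subpaths (rather than just arbitrary connected subgraphs) while maintaining pairwise anticompleteness within each side of the bipartition. The anticompleteness built into each $\mathcal{Q}_{\{x,y\}}$ (internal anticompleteness of its paths) is what makes this possible, since we can distribute our $\rho$ chosen subpaths among different members of $\mathcal{Q}_{\{x,y\}}$, automatically giving within-side anticompleteness. Calibrating $g$ so that enough such members exist after all Ramsey-type pigeonholing is the delicate part, and it is why the bound on $g$ must itself grow with $\rho$ and $\sigma$.
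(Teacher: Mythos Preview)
The paper does not give a self-contained proof of this lemma: it simply observes that the statement is Lemma~3.6 of \cite{TW17} with the hypothesis ``$G$ is $K_{t+1}$-free'' replaced by ``$B$ is stable'', and notes that the only place the clique bound is used in \cite{TW17} is a Ramsey step extracting a large stable subset of $B$---which is unnecessary once $B$ is assumed stable. Your plan to follow the strategy of \cite{TW17} is therefore exactly what the paper does.

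Your sketch of what that strategy looks like is plausible in outline (select paths, Ramsey-homogenise the cross-adjacency patterns, split into the ``clean'' case yielding the $K_s$-subdivision and the ``dirty'' case yielding the $K_{\rho,\sigma}$-model), but the final step---upgrading a single cross-edge after homogenisation to a full linear $K_{\rho,\sigma}$-model by distributing subpaths among the anticomplete members of the $\mathcal{Q}$-families---is left vague, and you yourself flag it as the delicate part. Since the paper defers entirely to \cite{TW17} rather than spelling this out, there is nothing further to compare; to close the gap you would need to reproduce the relevant portion of the argument in \cite{TW17} rather than re-derive it.
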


The difference between \cref{lem:comp_model_rigid} here and Lemma 3.6 of \cite{TW17} is that in \cite{TW17} it is assumed that $G$ is $K_{t+1}$-free (and $t$ is another parameter in the statement of the theorem), but there is no assumption that the set $B$ is stable. However, the only place in the proof where the bound on the clique number of $G$ is used is an application of 
Ramsey Theorem to conclude that a large subset of $B$ is stable, so we do not need that assumption here.

We also need the following, which follows immediately from the main result of  result of \cite{TW16}. Following \cite{TW16}, we say that a
{\em $(s,l)$-constellation} is a graph $C$ in which there is a stable set $S_{C}$ with $|S_C|=s$, such that $C \setminus S_C$ has exactly $l$ components, 
every component of $C\setminus S_{C}$ is a path, and every vertex
of $S_{C}$ has at least one neighbor in each component of
$S \setminus S_{C}$.
\begin{theorem}\label{thm:TW16main}
  For all  $s,l,r \in \nat$, there is a  positive integer
  $a=a(s,l,r)$ with the following property. Let $G$ be a graph that  that contains a $K_{a, a}$-induced-minor. Then one of the following holds.
   \begin{enumerate}[\rm (a)]
        \item\label{thm:motherKtt_a}There is an induced subgraph of $G$ isomorphic to either $K_{r,r}$, a subdivision of $W_{r\times r}$, or the line graph of a subdivision of $W_{r\times r}$.
        \item\label{thm:motherKtt_b} There is an
          $(s,l)$-constellation in $G$. 
        \end{enumerate}
  \end{theorem}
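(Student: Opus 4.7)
The plan is to derive \cref{thm:TW16main} as an essentially immediate corollary of the main theorem of \cite{TW16}, which the authors cite for exactly this purpose. No genuinely new combinatorial argument is needed; the task is one of faithful translation between the formulation in \cite{TW16} and the formulation we want here.

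First, I would state the main theorem of \cite{TW16} in the form that is most useful for us. That theorem is, in spirit, a dichotomy for graphs containing a sufficiently large $K_{a,a}$-induced-minor: such a graph must contain, as an induced substructure, either one of a small list of ``grid-like'' graphs (namely $K_{r,r}$, a subdivision of $W_{r\times r}$, or the line graph of a subdivision of $W_{r\times r}$) or a large ``stable-set-and-paths'' configuration, which is precisely an $(s,l)$-constellation in our terminology. I would then set $a(s,l,r)$ equal to the threshold provided by \cite{TW16} applied with its internal parameters chosen so that the conclusions above are the guaranteed outcomes.

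Second, I would match the outcomes. The three grid-like outcomes constitute our outcome (a), and the constellation outcome constitutes our outcome (b). If \cite{TW16} is phrased with a slightly different starting hypothesis (for instance, using an induced subdivision of $K_a$, or a large strong block, rather than a $K_{a,a}$-induced-minor), I would bridge the gap using standard moves: contracting branch sets of the induced minor to exhibit the stronger starting object, or using that a $K_{a,a}$-induced-minor already forces treewidth at least $a$. Any constant factor lost in this translation can be absorbed by inflating $a(s,l,r)$.

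The only obstacle is bibliographic: verifying that the definition of $(s,l)$-constellation used here coincides with (or is an easy consequence of) the corresponding object in \cite{TW16}, which may appear under a different name or with slight variations (e.g.\ allowing trees or subdivided stars instead of pure paths in $C \setminus S_C$). If any mismatch arises, a short greedy pruning argument on a large enough such object extracts the pure-paths version required by our definition, again at the cost of inflating $a(s,l,r)$ by a factor bounded in terms of $s$ and $l$. Since this is parameter bookkeeping rather than new mathematics, the authors describe the result as an immediate consequence of \cite{TW16}.
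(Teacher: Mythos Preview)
Your proposal is correct and matches the paper's treatment exactly: the paper offers no proof of this statement at all, introducing it with ``We also need the following, which follows immediately from the main result of \cite{TW16}.'' Your plan to quote the main theorem of \cite{TW16} and match its outcomes to (a) and (b), absorbing any definitional discrepancies into the choice of $a(s,l,r)$, is precisely the intended derivation.
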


We are now ready to prove \cref{thm:slim}.
\begin{proof}[Proof of \cref{thm:slim}.]
  Let $G \in \mathcal{C}_t$.
  Let  $\rho=a(t,t,t)$  from
  \cref{thm:TW16main}. Let 
  $q=g(t^2,\rho,\rho)+f(t^2,\rho, \rho)$ and
  $p=f(t^2,\rho,\rho)$ as in \cref{lem:comp_model_rigid}.
  We will show that $G$ is $(p,q)$-slim.
  Suppose that there is a stable set $B\subseteq V(G)$ with
  $|B|=p$ such that every pair $x,y$ in $B$ is $q$-wide.
  
  Since $|B| \leq p$, it follows that for every $x,y \in B$ there exists
  a set $\mathcal{Q}_{\{x,y\}}$ of $x \dd y$-paths whose interiors are disjoint from $B$ and pairwise anticomplete with $|\mathcal{Q}_{\{x,y\}}| =g(t^2,\rho,
  \rho)$. Let $G'$ be the graph obtained from $B \cup \bigcup_{x,y \in B}V(\mathcal{Q}_{\{x,y\}})$
  by replacing each vertex $v$ in  $\bigcup_{x,y \in B}V(\mathcal{Q}_{\{x,y\}})$
  with a clique consisting of vertices $v_{\{x,y\}}$ for every
  $x,y \in B$ such that $v$ belongs to a path of $\mathcal{Q}_{\{x,y\}}$.
  Now for every $x,y \in B$, let $\mathcal{Q}'_{\{x,y\}}$ be the set of paths
  obtained by replacing each vertex $v$ in each path of $\mathcal{Q}_{\{x,y\}}$
  by its copy $v_{\{x,y\}}$. Let $\mathcal{Q}'=\bigcup_{x,y \in B}\mathcal{Q}'_{\{x,y\}}$. Then $(B,\mathcal{Q'})$ is an $(f(s,\rho, \rho), g(s,\rho,\rho))$
  strong block satisfying the assumptions of \cref{lem:comp_model_rigid}.
  It follows from \cref{lem:comp_model_rigid} that one of the following holds.
  \begin{enumerate}[(a)]
 \item\label{lem:comp_model_rigid_a}  There is an induced subgraph of $G'$ isomorphic to a proper subdivision of $K_t$.
 \item \label{lem:comp_model_rigid_b} There is a linear induced
   $K_{\rho,\rho}$-model in $G'$. 
\end{enumerate}
  Suppose first that $G'$ contains an induced subgraph $F$ isomorphic to a proper subdivision of $K_t$.
  Then no two vertices of $F$ are adjacent twins, and so
  $F$ is isomorphic to an induced subgraph of $G$. It follows that $G$ contains a subdivision of $W_{t,t}$, contrary to the fact that $G \in \mathcal{C}_t$.
  Thus we deduce that that there is a linear induced  $K_{\rho,\rho}$-model 
  $(C_1, \dots, C_{2\rho})$  in $G'$.
  

  We apply \cref{thm:TW16main} to $G'[\bigcup_{i=1}^{2\rho}C_i]$
    to deduce that $G'$ contains 
  an induced subgraph $F$ isomorphic to either $K_{t,t}$, a subdivision of $W_{t\times t}$, or the line graph of a subdivision of $W_{t\times t}$
  or a $(t,t)$-constellation. In all cases, 
  no two vertices of $F$ are adjacent twins, and so $F$ is isomorphic to an induced subgraph of $G$. But in all cases $F$ contains either an induced
  $W_{t \times t}$-model, or an induced $K_{t,t}$-model, 
  contrary to the fact that  $G \in \mathcal{C}_t$.
  \end{proof}

\section{Separating slim pairs of vertices in $(F,r)$-based graphs in $\mathcal{C}_t$}\label{sec:based}

Let $G$ be a graph and let $F$ be an induced subgraph of $G$ such that every component of $F$ is a star (with at least two vertices).
For every star $S$  of $F$, let {\em the center} of $S$, denoted by $c(F)$, be defined as follows. If $|S|=2$, then let $c(S)$ be an arbitrary vertex of $S$.
If $|S|>2$, let $c(S)$ be the vertex of degree greater than one in $S$.
Let $l(S)=S \setminus c(S)$.
Let $C(F)$ be the set of all centers of stars of $F$; we call the vertices of
$C(F)$ the {\em centers of $F$}. Let $L(F)=F \setminus  C(F)$; we call
the vertices of $L(F)$ the {\em leaves} of $F$. Observe that both
$C(F)$ and $L(F)$ are stable sets. For a set $X \subset V(G)$, we say that
$X$ is {\em $F$-based} if $S \subset X$ for every star  $S$ of $F$ with
$S \cap X \neq \emptyset$. We define the {\em $F$-measure} of an
$F$-based subset $X$, $\mu_F(X)$, as follows. Let
$a(X)=|\{ S \text{ such that } S \text{ is a star of } F \text { and }S \cap X \neq \emptyset\}|$.
  Let $b(X)=|X \setminus F|$. Then $\mu_F(X)=a(X)+b(X)$.
  Let $G'$ be an induced subgraph of $G$ and let $S$ be a star of $F$. If
  $c(S) \in G'$ and $l(S) \cap G' \neq \emptyset$, the
  {\em projection of $S$ onto $G'$}, $p_{G'}(S)$,  is the star $S \cap G'$.
  We define  $F(G')$ to be the induced subgraph of $G'$  whose components are $p_{G'}(S)$ where  $S$ is a star of $F$ with
  $c(S) \in G'$ and $l(S) \cap G' \neq \emptyset$.
  We define $C(F(G'))=C(F) \cap F(G')$ and $L(F(G'))=L(F) \cap F(G')$.
  Let $r$ be a positive integer. 
We say that $G$ is {\em $(F,r)$-based} if every induced subgraph $G'$ of $G$
admits a tree decomposition $(T', \chi')$ such that for each $t \in V(T')$,
$\chi'(t')$ is an $F(G')$-based set with $\mu_{F(G')}(\chi'(t')) \leq r$.
We call $(T',\chi')$ an  {\em $(F,r)$-tree decomposition} of $G'$.
The goal of this section is to prove that, for appropriately chosen $q$,  every $q$-slim pair of vertices in an $(F,r)$-based graph in $\mathcal{C}_t$ has a small separator.

We start with the following:
\begin{theorem}
\label{thm:mineslim}
  Let $G$ be a graph, let $F$ be an induced subgraph of $G$ such that every component of $F$ is a star (with at least two vertices), and let $r,q,x \in \nat$.
  Assume that $G$ is $(F,r)$-based.
  Let $a,b \in V(G)$ be an $q$-slim pair in $G$. Then there exists a set 
  $D \subseteq V(G)$ with $|D| \leq r(q-1)x$ such that either
  \begin{enumerate}
  \item $D$ is an $a \dd b$-separator in $G$, or
  \item  $(a,b)$ is $(x, r(q-1), 1,1)$-mineable in $G \setminus D$.
  \end{enumerate}
\end{theorem}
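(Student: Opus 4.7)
The plan is to construct the mineability witness $(Y_1, X_1), \ldots, (Y_x, X_x)$ and the set $D$ iteratively, using the $(F,r)$-based hypothesis to produce a small-$F$-measure $a \dd b$-separator at each step, and the $q$-slim hypothesis to prune each separator down to a stable set of size at most $r(q-1)$. Assume $q \geq 2$; if $q=1$ then $(a,b)$ is already disconnected and $D=\emptyset$ works. Initialise $D := \emptyset$, and at step $i \in \{1,\ldots,x\}$ let $G_{i-1} := G \setminus D \setminus (Y_1 \cup \cdots \cup Y_{i-1})$. If $a,b$ lie in different components of $G_{i-1}$, return $D \cup (Y_1 \cup \cdots \cup Y_{i-1})$ as the $a \dd b$-separator required by case~(1). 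Otherwise, the $(F,r)$-based hypothesis yields an $(F(G_{i-1}),r)$-tree decomposition of $G_{i-1}$, and any bag $B_i$ on the tree path from an $a$-bag to a $b$-bag is an $a \dd b$-separator in $G_{i-1}$. Partition $B_i = L_i \sqcup C_i \sqcup E_i$ with $L_i = B_i \cap L(F)$, $C_i = B_i \cap C(F(G_{i-1}))$, and $E_i = B_i \setminus V(F(G_{i-1}))$; the $F$-measure bound gives $|C_i| + |E_i| \leq r$.

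Place the externals $E_i$ into $D$, and define $(Y_i, X_i)$ by greedy pruning starting from $(L_i, C_i)$: repeatedly remove a leaf from $Y_i$ (and drop from $X_i$ any center it would leave orphan, i.e., a center without any leaf in $Y_i$) as long as $Y_i \cup X_i$ remains an $a \dd b$-separator in $G_{i-1} \setminus E_i$. At termination $Y_i \cup X_i$ is still such a separator, $X_i \subseteq N(Y_i)$ by construction, and $Y_i$ is stable because $Y_i \subseteq L(F)$ and $L(F)$ is stable in $G$ (the components of the induced subgraph $F$ are stars, so leaves of a common star are non-adjacent and leaves of distinct stars lie in distinct components of $F$, hence are non-adjacent in $G$).

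The key claim is $|Y_i| \leq r(q-1)$, which follows from the $q$-slim hypothesis. After the pruning, for each center $c \in C_i$ the retained leaves of $c$'s star in $Y_i$ are either (a) \emph{directly essential} (each such leaf $\ell$ admits an $a\dd b$-path $P_\ell$ in $G_{i-1} \setminus E_i$ whose only vertex in $Y_i \cup X_i$ is $\ell$, so $P_\ell$ avoids $c$ and in particular $\ell$ has neighbors in both components $A$ and $B^*$ of $G_{i-1} \setminus E_i \setminus (Y_i \cup X_i)$ containing $a$ and $b$, respectively) or (b) retained only to prevent $c$ from becoming orphan, of which there is at most one per center. If some center had $q$ directly essential leaves $\ell_1, \ldots, \ell_q$, the pairwise non-adjacency of the $\ell_k$ in $G$ (as leaves of a common star in the induced $F$) together with a Menger-style re-routing of the $A$- and $B^*$-side portions of the paths $P_{\ell_k}$ within the connected subgraphs $A$ and $B^*$ would yield $q$ internally anticomplete $a\dd b$-paths in $G$, contradicting the $q$-slim hypothesis. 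Hence each center contributes at most $q-1$ leaves to $Y_i$, giving $|Y_i| \leq (q-1)|C_i| \leq r(q-1)$.

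Finally, at each step we add at most $|E_i| + |Y_i| \leq |E_i| + (q-1)|C_i| \leq (q-1)(|E_i| + |C_i|) \leq r(q-1)$ vertices to $D \cup (Y_1 \cup \cdots \cup Y_i)$ (using $q \geq 2$), so $|D| \leq r(q-1)x$ in both cases. In case~(2), where the iteration completes all $x$ steps, the mineability conditions follow: the $Y_i$'s are pairwise anticomplete because their union lies in the stable set $L(F)$; components of each $Y_i$ are singletons, giving $p = 1$; and the condition $z = 1$, requiring the sets $X_i' = Y_i \cup X_i$ to be pairwise disjoint, is maintained by choosing each $B_i$ to avoid centers already used in previous $X_j$'s (possible by routing the $a$-to-$b$ tree path appropriately or, when unavoidable, by absorbing conflicting centers into $D$ as external vertices of the re-projected $F(G_j)$). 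The chief technical obstacle is the Menger-style rerouting inside the $q$-slim contradiction: producing $q$ internally anticomplete $a\dd b$-paths from $q$ directly essential leaves of a common star, which requires the $A$- and $B^*$-side sub-paths to avoid $G$-edges between distinct paths' interiors, not merely vertex overlaps.
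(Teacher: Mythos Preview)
Your approach diverges substantially from the paper's, and the step you yourself flag as ``the chief technical obstacle'' is a genuine gap, not a technicality.

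The paper never tries to prune leaves or produce $q$ internally anticomplete paths. Instead it invokes Lemma~\ref{lem:basket}: for any tree decomposition of a graph containing a $q$-slim pair $(a,b)$, there are fewer than $q$ bags whose union meets every $a\dd b$-path. Taking an $(F(G_i),r)$-tree decomposition of $G_i$ and the $q-1$ bags from this lemma, the paper sets $Y_{i+1}$ to be the \emph{centers} in these bags and $X_{i+1}$ to be the \emph{leaves}. The bound $|Y_{i+1}|\le r(q-1)$ is immediate: each of the $q-1$ bags has $F$-measure at most $r$, so at most $r$ centers. Stability of $Y_{i+1}$ and disjointness of the $X_j$'s then fall out for free, since $C(F)$ is stable and removing a center ejects all its leaves from $F(G_j)$ for later $j$.

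Your reversal of roles (leaves into $Y_i$, centers into $X_i$) forces you to bound the number of retained leaves per center, and your proposed bound does not go through. Having $q$ ``directly essential'' leaves $\ell_1,\dots,\ell_q$ of a common center gives you, for each $k$, an $a\dd b$-path $P_{\ell_k}$ whose only vertex in $Y_i\cup X_i$ is $\ell_k$; but the $A$-side and $B^*$-side portions of these paths can overlap arbitrarily and, worse, can have edges between them in $G$. A Menger argument inside $A$ (resp.\ $B^*$) at best yields internally \emph{disjoint} paths, not internally \emph{anticomplete} ones, and $q$-slimness says nothing about the former. Concretely, $A$ could be a single vertex adjacent to $a$ and to all of $\ell_1,\dots,\ell_q$, forcing all paths through it. So the contradiction does not materialise, and your bound on $|Y_i|$ fails. (Even granting it, your count gives at most $q-1$ essential leaves plus one orphan-guard per center, i.e.\ $q|C_i|$, not $(q-1)|C_i|$.) The $z=1$ disjointness of the $X_i'$ is also not free in your setup, since a center can survive the removal of some of its leaves and reappear in later $X_j$'s; the paper avoids this precisely by removing centers rather than leaves.
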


We start with a lemma.
\begin{lemma}
  \label{lem:basket}
  Let $G$ be a graph, let $q \in \nat$, and let $a,b \in V(G)$ be a $q$-slim pair.
  Let $(T, \chi)$ be a
  tree decomposition of $G$. Then there exists $I \subseteq V(T)$
with $|I|<q$ such that 
    for every $a \dd b$-path $P$ in $G$ we have that
  $(\bigcup_{t \in T}\chi(t))  \cap P^* \neq \emptyset.$ 
  \end{lemma}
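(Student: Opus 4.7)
The plan is to reformulate the lemma as a packing-covering duality for subtrees of $T$, and then invoke the classical fact that such a duality holds for families of subtrees of a tree.

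For each $a \dd b$-path $P$ in $G$, I would let
\[
T_{P^*} \;=\; \{t \in V(T) : \chi(t) \cap P^* \neq \emptyset\} \;=\; \bigcup_{v \in P^*} \{t \in V(T) : v \in \chi(t)\}.
\]
Since $a,b$ are non-adjacent, every $a \dd b$-path has non-empty interior, and since $P^*$ induces a connected subgraph of $G$, the set $T_{P^*}$ is a non-empty connected subtree of $T$. The crux of the argument is that pairwise vertex-disjoint members of the family $\mathcal{F} = \{T_{P^*} : P \text{ is an } a \dd b\text{-path in } G\}$ correspond to pairwise internally anticomplete $a \dd b$-paths: if $T_{P^*} \cap T_{Q^*} = \emptyset$, then any vertex of $P^* \cap Q^*$, or any edge between $P^*$ and $Q^*$, would place some bag-index in both subtrees, a contradiction. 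Hence $k$ pairwise disjoint members of $\mathcal{F}$ produce $k$ pairwise internally anticomplete $a \dd b$-paths in $G$, and since $(a,b)$ is $q$-slim, at most $q-1$ members of $\mathcal{F}$ can be pairwise disjoint.

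I would then invoke the standard min-max duality for families of subtrees of a tree: the maximum number of pairwise vertex-disjoint members of such a family equals the minimum size of a set $I \subseteq V(T)$ meeting every member. (This follows from the Helly property for subtrees together with chordality of their intersection graph, but it also admits a short direct proof: root $T$ arbitrarily, then greedily select a not-yet-hit subtree whose highest vertex in the rooted tree is deepest, add that vertex to $I$, and delete every subtree it hits.) Applied to $\mathcal{F}$, this yields $I \subseteq V(T)$ with $|I| \leq q-1 < q$ such that $I \cap T_{P^*} \neq \emptyset$—equivalently $\left(\bigcup_{t \in I} \chi(t)\right) \cap P^* \neq \emptyset$—for every $a \dd b$-path $P$, which is the desired conclusion. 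I do not anticipate a real obstacle: the structural observation is immediate from the defining properties of tree decompositions, and the subtree duality is a well-known classical result.
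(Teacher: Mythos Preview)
Your argument is correct. Reducing to a packing--covering problem for the family $\{T_{P^*}\}$ of subtrees of $T$ and then applying the classical min--max duality for subtrees of a tree is exactly the right structural observation, and the verification that disjoint subtrees force internally anticomplete paths is clean and complete.

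The paper itself does not give a self-contained proof: it simply observes that the case $q=3$ is Theorem~2.6 of \cite{tw15} and asserts that the same argument works for general $q$. Your write-up is therefore more informative than what appears here, and the route you take (subtree Helly / duality) is the standard one underlying that cited result, so there is no real divergence in method---you have just spelled out what the paper leaves implicit. One cosmetic point: the displayed conclusion in the lemma statement has $\bigcup_{t\in T}$ where $\bigcup_{t\in I}$ is clearly intended; you have silently (and correctly) read it that way.
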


For $q=3$ this  is Theorem 2.6 of \cite{tw15}, but the same proof works for all
$q$.

We can now prove \cref{thm:mineslim}.

\begin{proof}[Proof of \cref{thm:mineslim}]
  We may assume that $a$ and $b$ are in the same component of $G$, for otherwise the empty set is in $a \dd b$-separator in $G$. Let $G_0=G$. 
  Let $(T_0,\chi_0)$ be an $(F,r)$-tree decomposition of $G_0$.
Let $I_0$ be as in \cref{lem:basket}. Let\\
$C_1=\bigcup_{t \in I_0}(\chi(t) \setminus F)$,\\ 
$Y_1=\bigcup_{t \in I_0}(\chi(t) \cap C(F))$ and\\
$X_1=\bigcup_{t \in I_0}(\chi(t) \cap L(F))$ .\\
Then $Y_1 \subseteq C(F)$,
$X_1 \subseteq N(Y_1) \cap L(F)$ and
$Y_1 \cup X_1$ is
an $(a,b)$-separator in $G_0 \setminus C_1$. 
Since $Y_1 \subseteq C(F)$, it follows that $Y_1$ is a stable set, and so every component of $Y_1$ has size one.
Moreover $|C_1 \cup Y_1| \leq r(q-1)$. Let $G_1=G_0 \setminus (C_1 \cup Y_1)$. We may assume that $a$ and $b$ are in  the same component of $G_1$, for otherwise the theorem holds setting $D=C_1 \cup Y_1$.

We proceed as follows.  Assume that for some $i \geq 1$ we have defined $G_i,C_i, Y_1, \dots, Y_i, X_1, \dots, X_i$ with the following properties:
\begin{enumerate}
\item $G_i=G \setminus (C_i \cup \bigcup_{j \leq i} Y_j)$.
\item $a$ and $b$ are in the same component of $G_i$.
\item $|C_i \cup \bigcup_{j \leq i} \cup Y_j)| \leq  i(q-1)r$.
  \item $|Y_j| \leq r(q-1)$ for all $j \leq i$.
\item $Y_1, \dots, Y_i$  are disjoint subsets of $C(F)$.
\item $\bigcup_{j=1}^iY_i$ is a stable set.
\item For every $j<i$, $X_j \subseteq L(F) \cap N(Y_j)$.
  \item $X_1, \dots, X_i$ are disjoint subsets of $L(F)$.
    \item $Y_i \cup X_i$ is an $a \dd b$-separator in
    $G \setminus (C_i \cup \bigcup_{j<i}Y_j)$.
\end{enumerate}
Note that $G_1,C_1,X_1,Y_1$ satisfy the conditions above.
If $i=x$, we stop.
If $i<x$, proceed as follows.

Let $(T_i,\chi_i)$ be an $(F,r)$-tree decomposition of $G_i$.
Let $I_i$ be as in \cref{lem:basket}. Let\\
$C_{i+1}=C_i \cup \bigcup_{t \in I_i}(\chi(t) \setminus F(G_i))$,\\ 
$Y_{i+1}=\bigcup_{t \in I_i}(\chi(t) \cap C(F(G_i)))$ and\\
$X_{i+1}=\bigcup_{t \in I_i}(\chi(t) \cap L(F(G_i)))$.\\
Then $X_{i+1} \subseteq N(Y_{i+1}) \cap L(F)$ and 
$Y_{i+1} \cup X_{i+1}$ is
an $(a,b)$-separator in $G \setminus (C_{i+1} \cup \bigcup_{j < i+1}Y_j)|  =G_i \setminus C_{i+1}$.
Moreover, $|Y_{i+1}| \leq r(q-1)$ and
$$|C_{i+1} \cup \bigcup_{j \leq i+1}Y_j| \leq |C_i \cup \bigcup_{j \leq i}Y_j| + |\bigcup_{t \in I_i} \chi(t) \setminus L(F(G_i)|\\
\leq (i-1)r(q-1)+r(q-1)= i(q-1)r.$$
It follows from the definitions of $C(F(G_i))$ and $L(F(G_i))$  that
$Y_{i+1}$ is disjoint from $\bigcup_{j \leq i}Y_j$, and $X_{i+1}$ is
disjoint from $\bigcup_{j \leq i}X_j$.
Let $G_{i+1}=G_i \setminus (C_{i+1} \cup \bigcup_{j \leq i+1}Y_j)$.
If $a$ and $b$ are in different components of $G_{i+1}$, then
$C_{i+1} \cup \bigcup_{j \leq i+1}Y_j$ is an $a \dd b$-separator of size
at most $(i+1)(q-1)r \leq x(q-1)r$ in $G$, and the theorem holds.
Thus we may assume that  $a$ and $b$ are in the same component of $G_{i+1}$.
Now $G_{i+1}, C_{i+1}, Y_1, \dots Y_{i+1}, X_1, \dots, X_{i+1}$ satisfy the conditions above, and we can continue the process.

It follows that  we may assume that $i=x$.  
Now, setting $D=C_i$, 
the sets $Y_1, \dots, Y_x, X_1, \dots, X_x$ show that $(a,b)$ is $(x, (q-1)r, 1,1)$-mineable in $G \setminus D$, as required.
\end{proof}

We can now prove the main result of this section. Let $\phi$ be as in \cref{thm:mineabletobarrier}. We define $\psi(t,q) =\frac{100^2}{99}(4+2t)^2 \phi(t)(q-1)$.
\begin{theorem} \label{thm: sep slim general version}
  Let $t \in \nat$ and let $p,q$ be as in \cref{thm:slim}. Let $r\in \nat$ be such that $r\geq \psi(t,q)$.
   Then, for every $(F,r)$-based graph $G \in \mathcal{C}_t$ on $n$ vertices, and every  $q$-slim pair $(a,b)$ in $G$, there exists an $a \dd b$ separator in $G$ of size at most $2^{5(\log(r)+\sqrt{\log n\log r})}$.
\end{theorem}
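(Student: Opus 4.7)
The plan is to establish that the class of $(F,r)$-based graphs in $\mathcal{C}_t$ fits into the slim-barred framework of \cref{sec:sepbarred}, and then apply \cref{thm:slimbarredtosep} with parameters optimized in terms of both $n$ and $r$. The three key inputs are \cref{thm:slim} (giving $(p,q)$-slimness of $\mathcal{C}_t$), \cref{thm:mineslim} (reducing separation for a $q$-slim pair to mineability), and \cref{thm:mineabletobarrier} (turning mineable pairs into barriers).

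Concretely, given a $q$-slim pair $(a,b)$ in an $n$-vertex $(F,r)$-based graph $G \in \mathcal{C}_t$, I fix a parameter $x \in \nat$ to be optimized at the end. By \cref{thm:mineslim}, either there is an $a \dd b$-separator in $G$ of size at most $r(q-1)x$, or $(a,b)$ is $(x, r(q-1), 1, 1)$-mineable in $G \setminus D$ for some set $D$ with $|D| \leq r(q-1)x$. In the mineable case I apply \cref{thm:mineabletobarrier} to $G \setminus D$ with the first parameter replaced by $t^* := \max(t,p)$ (which is legitimate because $G$ is $K_{t^*,t^*}$-induced-minor-free), and with $y = r(q-1)$, $z = 1$, $p = 1$. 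This produces disjoint sets $X, Y, Z, C, M'$ such that $(X, Y, Z, C)$ is a $(t^*, 1)$-barrier in $G \setminus (D \cup M')$ separating $a$ from $b$, with $|M'| \leq xr(q-1)$, $|cc(C)| \leq \frac{100}{99}(4 + 2t^*)\phi(t^*) r(q-1) t^*$, and $|X \cup Y \cup Z| \leq \frac{100 n (4+2t^*)^2}{x}$. Since $t^* \geq p$, this is in particular a $(p,1)$-barrier. Setting $M := D \cup M'$, this exhibits the (hereditary) class of $(F,r)$-based graphs in $\mathcal{C}_t$ as $(q, p, 1, c, f, g)$-slim-barred with $c(n) = C_1 r$, $f(n) = C_2 xr$, $g(n) = C_3 n / x$, for constants $C_1, C_2, C_3$ depending only on $t$; the lower bound $r \geq \psi(t,q)$ is used to keep these prefactors consistent and to allow $g(n) < n$ for the range of $x$ we will consider.

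Applying \cref{thm:slimbarredtosep} to this data yields
\[
h(n) \;\leq\; 20\bigl(C_2 xr + 3 C_1^2 r^2\bigr) \bigl(C_1 r\bigr)^{\frac{2 \log n}{\log(x/C_3)}}.
\]
Taking logarithms, the prefactor contributes $\log x + O(\log r)$ while the exponent contributes $\frac{2 \log n \cdot O(\log r)}{\log x}$. Balancing via $\log x = \Theta(\sqrt{\log n \log r})$ produces $\log h(n) \leq O(\log r + \sqrt{\log n \log r})$, where the implicit constants depend on $t$ through $t$-dependent multiplicative and additive terms which, thanks to $r \geq \psi(t,q)$ being bounded below by a positive constant depending on $t$, are absorbed into $\log r$. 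Careful bookkeeping of these constants shows the coefficient in the exponent can be taken to be at most $5$, giving the claimed bound.

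The main technical obstacle will be reconciling the optimization over $x$ with the monotonicity hypotheses of \cref{thm:slimbarredtosep}: the theorem requires $n/g(n)$ to be non-increasing, but with $x = x(n)$ growing in $n$, $n/g(n) = x(n)/C_3$ is increasing, so the natural choice violates the hypothesis. The cleanest resolution is to fix an ambient scale $N$, choose $x$ as a function of $N$ only (so that $g$ is linear in $n$ and $n/g(n)$ is constant), apply \cref{thm:slimbarredtosep} at $n=N$, and then optimize $x(N)$ separately for each $N$. Alternatively, one may unroll the recursion of \cref{lemma: slim barred to sep recursion} directly with a per-step optimal $x$, bypassing \cref{thm:slimbarredtosep} altogether; the remaining analysis is routine constant-chasing to verify that the exponent $5$ is achieved.
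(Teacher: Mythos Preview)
Your proposal is correct and follows essentially the same route as the paper: fix a depth parameter $x$, use \cref{thm:mineslim} then \cref{thm:mineabletobarrier} to show the class is slim-barred with $c(n)=O_t(r)$, $f(n)=O_t(xr)$, $g(n)=O_t(n/x)$, apply \cref{thm:slimbarredtosep}, and optimize $\log x \approx \sqrt{\log n\log r}$. Your two points of extra care---replacing $t$ by $t^*=\max(t,p)$ so that the barrier parameter matches the first slimness parameter required by \cref{thm:slimbarredtosep}, and explicitly fixing $x=x(N)$ so that $n/g(n)$ is constant and the monotonicity hypothesis holds---are exactly the right moves, and in fact tidy up places where the paper's own write-up is a bit loose on notation.
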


\begin{proof}
  By \cref{thm:slim} $G$ is $(p,q)$-slim.
  Let $\phi  : \mathbb{N} \rightarrow \mathbb{N}$ be as in \cref{thm:mineabletobarrier}.
  Let $x=2^{\sqrt{\log(n)\log(r)}} 100 (4+2t)^2 \leq 2^{\sqrt{\log(n)\log(r)}}  r$. 
  We define the following functions.
  $$c(n)= \frac{100}{99} (4+2t) \phi(t)\,r(q-1)\, t \ \leq \frac{r^2}{100}.$$
  $$f(n)=2xr(q-1) \leq \frac{xr^2}{100}.$$
  $$g(n)=\frac{100n (4+2t)^2}{x} \leq 2^{-\sqrt{\log(n)\log(r)}}.$$
  \\
  \\
  \sta{$G$ is $(q,t, 1, c,f,g)$-slim-barred.\label{slimtoslimbarred}}

  Let $(c,d)$ be a $q$-slim pair in $G$. 
  Since $G$ is $(F,r)$-based, \cref{thm:mineslim} implies that
   there is $D \subseteq V(G)$ with $|D| \leq xr(q-1)$ such that  either
\begin{enumerate}
  \item $D$ is a $c \dd d$-separator in $G$, or
  \item  $(c,d)$ is $(x, r(q-1), 1,1)$-mineable in $G \setminus D$.
  \end{enumerate}
  If $D$ is a $c \dd d$-separator in $G$, then
  setting $X=Y=Z=C=\emptyset$ and $M = D$ gives a $(t,1)$ barrier in $G \sm M$. Thus, we may assume that
  $(c,d)$ is $(x, (q-1)r, 1,1)$-mineable in $G \setminus D$.
  Since $(c,d)$ is $q$-slim in $G$ and $G \in \mathcal{C}_t$,
 \cref{thm:mineabletobarrier} implies that  there
 exist disjoint subsets $X,Y,Z,C,M'$ of $V(G) \setminus (D \cup  \{c,d\})$ such that
    \begin{enumerate}
    \item $B=(X,Y,Z,C)$ is a $t$-barrier in $G\setminus M'$ that separates $c$ from $d$.
    \item $|C|\leq \frac{100}{99} (4+2t) \phi(t)\,r(q-1)\, t$.
    \item $|M'|\leq xr(q-1)$.
    \item $|X\cup Y\cup Z| \leq \frac{100n (4+2t)^2}{x}$.
\end{enumerate}
Setting $M=M' \cup D$, we get that   there
 exist disjoint subsets $X,Y,Z,C,M$ of $V(G) \setminus \{c,d\}$ such that
    \begin{enumerate}
    \item $B=(X,Y,Z,C)$ is a $t$-barrier in $G\setminus M$ that separates $c$ from $d$.
    \item $|C|\leq \frac{100}{99} (4+2t) \phi(t)\,r(q-1)\, t$.
    \item $|M|\leq 2xr(q-1)$.
    \item $|X\cup Y\cup Z| \leq \frac{100n (4+2t)^2}{x}$.
\end{enumerate}
      This proves
  \eqref{slimtoslimbarred}.
  \\
  \\
  Now since $(a,b)$ is $q$-slim and $G$ is $(q,t,c,f,g)$-slim-barred, \cref{thm:slimbarredtosep} implies
  that there is an $a \dd b$-separator of size at most $20(f(n)+3c(n)^2) c(n)^{2 \frac{\log n}{\log{\frac{n}{g(n)}}}}$  in $G$. 
    We now bound this quantity.
    \begin{align*}
    20(f(n)+3c(n)^2) c(n)^{2 \frac{\log n}{\log{\frac{n}{g(n)}}}}&= 20(f(n)+3c(n)^2) c(n)^{2 \sqrt{\frac{\log n}{\log r}}}\\
    &\leq 20\left(\frac{xr^2}{100}+\frac{3r^4}{100}\right) c(n)^{2 \sqrt{\frac{\log n}{\log r}}} \\
    &\leq xr^4 c(n)^{2 \sqrt{\frac{\log n}{\log r}}}\\
    &\leq xr^{4} 2^{4\log(r)\sqrt{\frac{\log n}{\log r}}}\\
    &\leq xr^{4} 2^{4\sqrt{\log n\log r}}\\
    &\leq r^{5} 2^{5\sqrt{\log n\log r}}\\
    &=  2^{5(\log(r)+\sqrt{\log n\log r})}.
    \end{align*}
  \end{proof}

\section{Bounding the treewidth of $(F,r)$-based graphs in $\mathcal{C}_t$}\label{sec:twbased}

In this section, we complete the treatment of $(F,r)$-based graphs, proving the following. Recall that $\psi(t,q) =\frac{100^2}{99}(4+2t)^2 \phi(t)(q-1)$,
where $\phi$ is as in \cref{thm:mineabletobarrier}.

\begin{theorem} \label{thm:specialtwbound}
  Let $t \in \nat$, let $p,q$ be as in \cref{thm:slim}, and let $r\geq \psi(t,q)$.
   Then every $n$-vertex 
  $(F,r)$-based graph $G$ in $\mathcal{C}_t$ satisfies $tw(G) \leq2^{9\log(r)+5\sqrt{\log n\log r}}$.
\end{theorem}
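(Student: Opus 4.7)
\emph{The plan.} To bound the treewidth, I show that for every weight function $w\colon V(G)\to[0,1]$ with $w(V(G))=1$, the graph $G$ admits a $w$-balanced separator of size at most $2^{9\log r+5\sqrt{\log n\log r}}$. The standard recursive construction of tree decompositions from balanced separators, together with the fact that every induced subgraph of $G$ is again $(F',r)$-based (for the appropriate projection $F'$), then yields the desired bound on $tw(G)$ up to a constant factor that is absorbed into the exponent. Fix such a $w$ and let $p,q$ be as in \cref{thm:slim}.

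\emph{Iteratively extracting small dominators.} Using $(F,r)$-tree decompositions, I inductively produce pairwise anticomplete sets $Y_1,\ldots,Y_p\subseteq V(G)$, each of size at most $r$, such that $N[Y_i]$ is a $w$-balanced separator of $G_i:=G\setminus\bigcup_{j<i}Y_j$. Having constructed $Y_1,\ldots,Y_{i-1}$, I select a heavy induced subgraph $H_i$ of $G_i$ avoiding $\bigcup_{j<i}N[Y_j]$ (so that any subsequent $Y_i\subseteq V(H_i)$ is automatically anticomplete to all earlier $Y_j$'s), apply an $(F(H_i),r)$-tree decomposition to it, and use the standard averaging argument to locate a bag $\chi(t_i)$ whose removal from $H_i$ leaves only components of $w$-weight at most $1/2$. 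I take $Y_i$ to consist of the centers of the stars of $F(H_i)$ meeting $\chi(t_i)$ together with the non-star vertices of $\chi(t_i)$; the constraint $\mu_{F(H_i)}(\chi(t_i))\leq r$ gives $|Y_i|\leq r$, every vertex of $\chi(t_i)$ lies in $N[Y_i]$, and this forces $N[Y_i]\supseteq \chi(t_i)$ to be a $w$-balanced separator of $G_i$. If at some intermediate step no sufficiently heavy subgraph avoiding $\bigcup_{j<i}N[Y_j]$ is available, then the $Y_j$'s already built (together with a slim-pair argument applied to the smaller transversal) already yield a separator that is well within the target bound and the proof terminates.

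\emph{Building the candidate separator.} For every $q$-slim pair $(y,y')\in Y_i\times Y_j$ with $i\neq j$, \cref{thm: sep slim general version} furnishes a $y \dd y'$-separator $S_{y,y'}$ of size at most $2^{5(\log r+\sqrt{\log n\log r})}$. I set $C:=\bigcup S_{y,y'}$ and $S^*:=C\cup\bigcup_{i=1}^p Y_i$. Since there are at most $(pr)^2$ pairs to consider and $p=p(t)$ is bounded while $r\geq \psi(t,q)$, the polynomial factor in $p$ is absorbed into $r$, yielding
\[ |S^*|\leq (pr)^2\cdot 2^{5(\log r+\sqrt{\log n\log r})}+pr\leq 2^{9\log r+5\sqrt{\log n\log r}}. \]

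\emph{Verifying balancedness via a $(p,q)$-slim transversal.} Suppose for contradiction that some component $D$ of $G\setminus S^*$ has $w(D)>1/2$. For each $i$, $D\subseteq G_i$ is connected and $D\cap Y_i=\emptyset$; since $N[Y_i]$ is a $w$-balanced separator of $G_i$, the component $D$ must intersect $N(Y_i)$, so I pick $v_i\in Y_i$ with a neighbor $u_i\in D$. Pairwise anticompleteness of the $Y_i$'s makes $\{v_1,\ldots,v_p\}$ a stable set in $G$, so \cref{thm:slim} yields a $q$-slim pair $(v_i,v_j)$ among them, and hence $S_{v_i,v_j}\subseteq C\subseteq S^*$ separates $v_i$ from $v_j$ in $G$. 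But concatenating the edge $v_iu_i$, any $u_i \dd u_j$ path inside the connected set $D\subseteq G\setminus S^*$, and the edge $u_jv_j$ yields a $v_i \dd v_j$ path avoiding $S^*$, a contradiction. The main obstacle is the bookkeeping in the iterative extraction of the $Y_i$'s: keeping pairwise anticompleteness, the $w$-balancing property of each $N[Y_i]$, and $|Y_i|\leq r$ simultaneously in sync across all $p$ iterations requires a delicate choice of the heavy subgraph $H_i$ at each step and careful handling of the case in which the construction stalls prematurely.
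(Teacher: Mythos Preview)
Your overall strategy (balanced separators via iterated bags, then separating the $q$-slim pairs among the transversal vertices) is the right one and matches the paper. However, the step that manufactures pairwise \emph{anticompleteness} of the $Y_i$'s is where the argument breaks. You obtain $Y_i$ from a bag $\chi(t_i)$ in an $(F(H_i),r)$-tree decomposition of $H_i$, where $H_i$ is the heavy component of $G\setminus\bigcup_{j<i}N[Y_j]$. This guarantees that $\chi(t_i)$ (hence $N_{H_i}[Y_i]$) is a balanced separator \emph{in $H_i$}, not in $G_i=G\setminus\bigcup_{j<i}Y_j$ as you claim. In $G_i$ the set $\bigcup_{j<i}N(Y_j)$ is still present, and a component of $G_i\setminus N[Y_i]$ can glue together several components of $H_i\setminus\chi(t_i)$ through those neighbourhoods, yielding weight larger than $\tfrac12$. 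Consequently, in the final contradiction you cannot conclude that the heavy component $D$ meets $N(Y_i)$ for \emph{every} $i$; the argument only shows $D$ meets $N(Y_j)$ for the first $j$ at which $D$ fails to avoid $N(Y_j)$, which does not produce a stable set of size $p$. The early-termination case has the same defect: knowing that $G\setminus\bigcup_{j<i}N[Y_j]$ has no heavy component does not bound $\lvert\bigcup_{j<i}N[Y_j]\rvert$, and the slim-pair trick cannot be invoked with fewer than $p$ transversal vertices.

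The fix, which is what the paper does, is to abandon anticompleteness altogether and work directly in $G_i$: take the bag $\chi_i(t_i)$ in an $(F(G_i),r)$-tree decomposition of $G_i$ that balances the renormalised weight $w_i$, and set $X_i=\chi_i(t_i)\setminus L(F(G_i))$, $G_{i+1}=G_i\setminus X_i$. Then the heavy $D$ (which lies in every $G_i$) must meet $\chi_i(t_i)$, and since $D\cap X_i=\emptyset$ it meets $\chi_i(t_i)\cap L(F(G_i))$; the centre $v_i$ of the corresponding star lies in $X_i\cap C(F(G_i))\subseteq C(F)$. The set $C(F)$ is stable \emph{by definition of a star forest}, so $\{v_1,\ldots,v_p\}$ is stable (and the $v_i$ are distinct because the $X_i$ are disjoint). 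This is the missing observation: stability of the transversal comes for free from the star-forest structure, so there is no need to engineer anticompleteness of the $Y_i$'s, and once you drop that requirement the balancing claim for $G_i$ is immediate.
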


  We remark that a version of \cref{thm:specialtwbound}  
 could be proved using  
  \cref{thm: sep slim general version}  and Theorem 6.5 of
  \cite{tw7} if the clique number of $G$ is bounded. Here we will include a
  different proof that does not use this assumption.

For a graph $G$ a function  $w: V(G) \rightarrow [0,1]$ is a {\em normal weight function} on $G$ if $w(V(G))=1$, where for $X \subseteq V(G)$ we denote $\sum_{v \in X} w(v)$ by $w(X)$.
Let $c \in [0, 1]$ and let $w$ be a normal weight function on $G$. A set $X \subseteq V(G)$ is a {\em $(w,c)$-balanced separator} if
$w(D) \leq  c$ for every component $D$ of $G \setminus X$. The set $X$ is a {\em $w$-balanced separator} if $X$ is a $(w,\frac{1}{2})$-balanced separator.

The following result was originally proved by Robertson and Seymour in \cite{RS-GMII},
  and tightened by Harvey and Wood in \cite{params-tied-to-tw}.
  It was then  restated and proved in the language of $(w, c)$-balanced separators in \cite{tw2}. 

  \begin{theorem}\label{thm:septotw}
      Let $G$ be a graph, let $c \in [\frac{1}{2}, 1)$, and let $d$ be a positive integer. If for every
    normal weight function $w: V(G) \to [0, 1]$,  $G$ has a $(w, c)$-balanced separator of size at most $d$, then $tw(G) \leq \frac{1}{1-c}d$. 
\end{theorem}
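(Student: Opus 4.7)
The plan is to apply \cref{thm:septotw} with $c = 1/2$, reducing the problem to showing that every normal weight function $w$ on $G$ admits a $w$-balanced separator of size at most $2^{9\log r + 5\sqrt{\log n\log r}-1}$. Given such a $w$, I would construct the separator as $S = C \cup \bigcup_i Y_i$, where the $Y_i$'s form a pairwise anticomplete ``skeleton'' coming from the $(F,r)$-based structure and $C$ collects small separators for slim pairs between different $Y_i$'s. Then \cref{thm:septotw} yields $tw(G) \leq 2|S|$.

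For the skeleton, I would build $Y_1, \dots, Y_p$ iteratively, for $p$ as in \cref{thm:slim}. Set $G_i := G \setminus \bigcup_{j<i} Y_j$ and $G_i^\ast := G \setminus N_G[\bigcup_{j<i} Y_j]$. The induced subgraph $G_i^\ast$ is $(F(G_i^\ast), r)$-based, so by the standard ``balanced bag'' argument for tree decompositions, some bag $B_i$ of an $(F(G_i^\ast),r)$-tree decomposition of $G_i^\ast$ is balanced with respect to an appropriate reweighting $w'$ in which each vertex of $G_i^\ast$ absorbs the $w$-weight of the pieces of $G_i \setminus G_i^\ast$ that attach to it via neighbors of $\bigcup_{j<i} Y_j$. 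Define $Y_i := (B_i \cap C(F(G_i^\ast))) \cup (B_i \setminus F(G_i^\ast))$, so that $|Y_i| \leq \mu_{F(G_i^\ast)}(B_i) \leq r$ and $N_G[Y_i] \supseteq B_i$. Since $Y_i \subseteq G_i^\ast$, it is automatically anticomplete in $G$ to $Y_1, \dots, Y_{i-1}$, and the use of $w'$ translates the balanced property of $B_i$ in $G_i^\ast$ into $N_G[Y_i]$ being a $w$-balanced separator of the full graph $G_i$.

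For each pair $y \in Y_i$, $y' \in Y_j$ with $i \neq j$ that is $q$-slim in $G$ (where $q$ is from \cref{thm:slim}), invoke \cref{thm: sep slim general version} to obtain a $y \dd y'$-separator $S_{y,y'}$ in $G$ with $|S_{y,y'}| \leq 2^{5(\log r + \sqrt{\log n \log r})}$. Put $C := \bigcup S_{y,y'}$ and $S := C \cup \bigcup_i Y_i$. Since the number of candidate pairs is at most $(pr)^2$, we obtain $|S| \leq (pr)^2 \cdot 2^{5(\log r + \sqrt{\log n \log r})} + pr$, which for $r \geq \psi(t,q)$ (and hence $r$ sufficiently large relative to the constant $p = p(t)$) simplifies to at most $2^{9\log r + 5\sqrt{\log n \log r}-1}$.

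To verify that $S$ is a $w$-balanced separator, suppose for contradiction that some component $D$ of $G \setminus S$ has $w(D) > 1/2$. If no vertex of $Y_i$ had a neighbor in $D$, then $D \subseteq G_i$ would lie inside a single component of $G_i \setminus N_G[Y_i]$, so the balanced-separator property of $N_G[Y_i]$ in $G_i$ would force $w(D) \leq 1/2$, a contradiction. Hence each $Y_i$ contains some $v_i$ with a neighbor in $D$. The pairwise anticompleteness of the $Y_i$'s makes $\{v_1, \dots, v_p\}$ a stable set, so by \cref{thm:slim} some pair $(v_i, v_j)$ is $q$-slim. Then $S_{v_i,v_j} \subseteq C \subseteq S$, yet any $v_i \dd v_j$-path whose interior runs through $D$ avoids $S$, contradicting that $S_{v_i,v_j}$ separates $v_i$ from $v_j$. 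Applying \cref{thm:septotw} gives $tw(G) \leq 2|S| \leq 2^{9\log r + 5\sqrt{\log n\log r}}$. The main obstacle is the skeleton construction: arranging for $N_G[Y_i]$ to be a $w$-balanced separator of the full $G_i$ while keeping the $Y_i$'s anticomplete requires the reweighting trick above, after which the rest is bookkeeping.
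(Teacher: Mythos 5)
Your proposal does not prove the statement it was assigned. The statement is \cref{thm:septotw} itself: the classical fact that if \emph{every} normal weight function on $G$ admits a $(w,c)$-balanced separator of size at most $d$, then $tw(G)\leq \frac{1}{1-c}d$. What you have written is instead a proof of \cref{thm:specialtwbound} (the treewidth bound for $(F,r)$-based graphs in $\mathcal{C}_t$), and your very first sentence \emph{applies} \cref{thm:septotw} as a black box. Relative to the assigned statement this is circular: you assume exactly what you were asked to prove, and everything that follows (the skeleton $Y_1,\dots,Y_p$, the slim-pair separators $S_{y,y'}$, the contradiction via \cref{thm:slim}) is the content of a different theorem in the paper, one which the paper proves in \cref{sec:twbased} in essentially the way you describe.

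For the record, the paper does not prove \cref{thm:septotw} either; it is quoted from the literature (Robertson--Seymour, tightened by Harvey and Wood, and restated in the balanced-separator language in the cited reference). A genuine proof would have to go in the opposite direction from what you wrote: starting from the hypothesis that every normal weight function admits a small $(w,c)$-balanced separator, one must \emph{construct} a tree decomposition of width at most $\frac{1}{1-c}d$, typically by recursively choosing weight functions concentrated on the boundary sets of the pieces already split off, extracting balanced separators for them, recursing on the components, and gluing the resulting decompositions along bags formed from the separators. None of that machinery appears in your write-up. If your actual target was \cref{thm:specialtwbound}, then your argument matches the paper's proof of that result (the iterated $(F,r)$-tree decompositions producing anticomplete sets $Y_i$ whose closed neighborhoods are balanced separators, plus \cref{thm: sep slim general version} for the slim pairs), but it cannot stand as a proof of \cref{thm:septotw}.
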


  We now prove the main result of this section.
  \begin{proof}[Proof of \cref{thm:specialtwbound}]
    We start with the following.

    \sta{Let $w: V(G) \rightarrow [0,1]$ be a normal weight function on $G$. Then
      $G$ admits a $(w,\frac{1}{2})$-balanced separator of size at most
      $ rp+(rp)^2 2^{5(\log(r)+\sqrt{\log n\log r})}$ . \label{smallsep}}

    Let $G_1=G$, $X_0=\emptyset$ and  $w_1=w$. For $i \in \{1, \dots, p\}$,
    Assume that $X_1, \dots, X_{i-1}, G_1, \dots, G_i$ have already been defined and 
    proceed as follows. 
        Let $(T_i, \chi_i)$ be an $(F,r)$-tree decomposition of $G_i$.  It is well known (see e.g. the proof of 
        Lemma 7.19 in \cite{cygan2015parameterized} or Theorem 2.7 in \cite{tw15}) that there exists
        $t_i \in V(T_i)$ such that $w_i(D) \leq \frac{1}{2}$ for every component $D$ of $G \setminus \chi_i(t_i)$. Let $X_i=\chi_i(t_1) \setminus L(F(G_i))$.
        Then
    $|X_i| \leq r$. 

        If $i<p$, define
        $G_{i+1}=G_i \setminus X_i$
        and $w_{i+1}(v)=\frac{w_i(v)}{1-\Sigma_{u \in X_i}w_i(u)}$
        for every $v \in G_{i+1}$. Then $w_{i+1}$ is a normal weight function on
        $G_{i+1}$, and we repeat the process to define $X_{i+1}$.
        We stop when $i=p$ and $X_1, \dots, X_p$ have been defined.

Next, let $\mathcal{P}$ be the set of all $q$-slim pairs $(v,v')$  such that $v \in X_i$ and $v' \in X_{i'}$ for some $1 \leq i < i'  \leq p$.
For every  $(v,v') \in \mathcal{P}$, let $X_{vv'}$  
be a $v \dd v'$-separator in $G$
of size at most $2^{5(\log(r)+\sqrt{\log n\log r})}$ given by \cref{thm: sep slim general version}.
Let $X=\bigcup_{i=1}^pX_i \cup \bigcup_{(v,v') \in \mathcal{P}}X_{vv'}$. 
Then $|X|  \leq rp+(rp)^2 2^{5(\log(r)+\sqrt{\log n\log r})}$.
We show that $X$ is a balanced separator in $G$.
Suppose for contradiction that there is a component $D$ of 
                $G \setminus X$ with $w(D)>\frac{1}{2}$. Then $w_i(D) > \frac{1}{2}$ 
        for every $i$. Since $\chi(t_i)$ is a $w_i$-balanced separator in
        $G_i$ for every $i$, it follows that for every $i \in \{1, \dots, p\}$
        there is $v_i \in X_i \cap C(F(G_i))$ such that $v_i$ has
        a neighbour in $D$ (in fact, $v_i$ has a neighbour in
        $D \cap \chi_i(t_i) \cap L(F(G_i))$).  By \cref{thm:slim} there exist
        $v,v' \in \{v_1, \dots, v_p\}$ such that $(v,v') \in \mathcal{P}$.
        But there is a path from $v$ to $v'$ with interior in $D$, contrary to the fact that  $D \subseteq G \setminus X_{vv'}$.
        This proves \eqref{smallsep}.
        \\
        \\
        From  \eqref{smallsep} and \cref{thm:septotw} we deduce that
        \begin{align*}
        tw(G) &\leq 2\left(rp+(rp)^2 2^{5(\log(r)+\sqrt{\log n\log r})}\right)\\
        &\leq 4(rp)^2 2^{5(\log(r)+\sqrt{\log n\log r})} \\
        &\leq r^4 2^{5(\log(r)+\sqrt{\log n\log r})} \\
        &= 2^{9\log(r)+5\sqrt{\log n\log r}}.
        \end{align*}
  \end{proof}

\section{Bounding the treewidth of $\mathcal{C}_t^*$}\label{sec:useclique}

In this section, we prove the main result of this paper, which we restate.

\main*

 We follow the general road map of \cite{BHKM}, but we  phrase our arguments in a slightly different language. A {\em star-coloring} of
 a graph $G$ is a proper coloring such that the union of every two color classes induces a star forest in $G$ (that is a graph where each component is a star or a singleton). The {\em star chromatic number} of $G$ is the minimum $k$ such that $G$ admits a star coloring with $k$ color classes.  The following is immediate from Theorem~11 of \cite{BHKM} and Ramsey Theorem:

 \begin{theorem}
   \label{thm:starcoloring}
   For every $t \in \nat$ there is an integer $d_1=d_1(t)$ such that every
   graph in $C_t^*$ has star-chromatic number $d_1$.
 \end{theorem}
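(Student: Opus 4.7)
The plan is to invoke Theorem~11 of \cite{BHKM} as a black box; I expect that result to bound the star chromatic number of graphs that exclude $W_{t\times t}$ as an induced minor and exclude $K_{s,s}$ as a (not necessarily induced) subgraph, for some parameter $s$. The only gap between such a hypothesis and membership in $\mathcal{C}_t^*$ is that $\mathcal{C}_t^*$ forbids $K_{t,t}$ as an induced minor and forbids a clique of size $t$, rather than forbidding $K_{s,s}$ as a subgraph outright. Bridging this gap is precisely where Ramsey's theorem enters.

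The key claim I would establish first is the following reduction: if $G \in \mathcal{C}_t^*$, then $G$ contains no $K_{s,s}$ subgraph, where $s := R(t,t)$ is the diagonal Ramsey number. Suppose for contradiction that $A, B \subseteq V(G)$ are disjoint with $|A| = |B| = s$ and with $A$ complete to $B$. Since $G$ has no clique of size $t$, the induced subgraph $G[A]$ contains no $K_t$, so by the choice of $s = R(t,t)$ it contains a stable set $A'$ of size $t$; likewise $G[B]$ contains a stable set $B'$ of size $t$. But then $G[A' \cup B']$ is isomorphic to $K_{t,t}$ (as $A$ is complete to $B$ and both $A'$, $B'$ are stable), which in particular yields a $K_{t,t}$-induced-minor in $G$, contradicting $G \in \mathcal{C}_t$.

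With this reduction in hand, every $G \in \mathcal{C}_t^*$ belongs to the class of $W_{t \times t}$-induced-minor-free graphs with no $K_{s,s}$ subgraph, where $s = R(t,t)$ depends only on $t$. Applying Theorem~11 of \cite{BHKM} to this class yields an integer $d_1 = d_1(t)$ bounding the star chromatic number of every such graph, and in particular of every graph in $\mathcal{C}_t^*$, as required.

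The main (and essentially the only) obstacle is verifying that Theorem~11 of \cite{BHKM} is indeed stated in the form I am assuming, namely that it bounds the star chromatic number (as opposed to merely the chromatic number, degeneracy, or some related coloring parameter) for $W_{t \times t}$-induced-minor-free graphs with no $K_{s,s}$ subgraph. If it is phrased only for a related parameter, one would need a short additional argument, but given that the paper asserts the corollary is "immediate" from that theorem and Ramsey, I expect no further work beyond the Ramsey reduction above.
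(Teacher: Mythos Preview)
Your proposal is correct and matches the paper's approach exactly: the paper simply asserts that the result is ``immediate from Theorem~11 of \cite{BHKM} and Ramsey Theorem,'' and the Ramsey reduction you spell out (finding stable sets $A'\subseteq A$ and $B'\subseteq B$ of size $t$ inside a hypothetical $K_{s,s}$ subgraph to produce an induced $K_{t,t}$) is precisely the bridge intended. Your caveat about the exact statement of Theorem~11 is reasonable caution, but the paper's phrasing confirms that no further work is needed.
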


 We will also use the main result of \cite{BHKM}:

 \begin{theorem}
   \label{thm:degree}
   There exists an integer $d_2$ such that 
   for all $t, \Delta \in \nat$, every graph in
   $\mathcal{C}_t^*$ with maximum degree at most $\Delta$ has
   treewidth at most $(t\Delta)^{d_2}$.
   \end{theorem}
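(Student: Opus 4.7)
The plan is to prove the contrapositive: if $G$ has maximum degree at most $\Delta$ and treewidth exceeding $(t\Delta)^{d_2}$ for a sufficiently large absolute constant $d_2$, then $G$ must contain one of $K_t$ (as a subgraph), $K_{t,t}$ (as an induced minor), or $W_{t\times t}$ (as an induced minor), contradicting $G \in \mathcal{C}_t^*$. The argument threads together three classical ingredients: the polynomial grid-minor theorem, a bounded-degree conversion from a minor to a topological minor, and a cleanup step that turns a topological minor into an induced one.

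First, I would invoke the polynomial grid-minor theorem: there is an absolute constant $\alpha$ such that every graph of treewidth at least $r^{\alpha}$ contains $W_{r\times r}$ as a minor. Applied with $r$ polynomial in $t\Delta$, this yields pairwise disjoint connected branch sets $\{B_v : v \in V(W_{r\times r})\}$ realizing $W_{r\times r}$ as a minor of $G$.

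Next, using $\deg_G \leq \Delta$, I would refine this minor into a subdivision of $W_{r'\times r'}$ for $r'$ still polynomial in $(t\Delta)$. Since every vertex of $W_{r\times r}$ has degree three, within each branch set $B_v$ it suffices to find one vertex of degree at least three in the model together with three internally disjoint paths going to the three adjacent branch sets. Bounded degree guarantees that such a structure can be extracted with only polynomial loss in $r$ and $\Delta$, via standard branch-set shrinking to a Steiner tree and rerouting.

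The third and most delicate step is to turn this subdivision into an induced minor isomorphic to $W_{t\times t}$. Here I would use that $G$ is $K_t$-free and has bounded degree $\Delta$ to control \emph{chords} of the subdivision, namely edges of $G$ between non-adjacent vertices of the subdivision. Each vertex has at most $\Delta$ potential chord partners, and $K_t$-freeness forbids cliques of chord endpoints from growing too large. A careful iterative pruning, deleting a small set of subdivision vertices (or wall bricks) to kill every chord while preserving a $(t\Delta)^{-O(1)}$ fraction of the wall structure, would yield an induced subdivision of $W_{t\times t}$ in $G$, which is in particular an induced $W_{t\times t}$ minor, completing the contradiction.

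The main obstacle is this third step. A naive Ramsey-style pruning loses a factor that is exponential in the wall dimension, which would only give a bound of the form $2^{\mathrm{poly}(t\Delta)}$ rather than the required polynomial $(t\Delta)^{d_2}$. The polynomial bound hinges on a parallel pruning scheme that handles many chord patterns simultaneously, exploiting $K_t$-freeness to bound the local richness of chord endpoints and bounded degree to argue that each chord endpoint interferes with only a bounded number of wall bricks. Designing this pruning, and verifying that what remains still realizes $W_{t\times t}$ as an induced minor, is where the bulk of the work lies.
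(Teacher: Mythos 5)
First, a point of comparison: the paper does not prove this statement at all. It is imported verbatim as ``the main result of \cite{BHKM}'' and used as a black box, so there is no internal proof to measure your attempt against; the question is only whether your outline could be completed into a proof.

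It cannot, as written: the entire difficulty of the theorem is concentrated in your third step, and you explicitly leave that step unexecuted. Two concrete problems. (1) ``Deleting a small set of subdivision vertices to kill every chord'' is not a viable move: in a graph of maximum degree $4$ the chords of a wall subdivision can form a perfect matching on the subdivision vertices, so any vertex set meeting every chord has linear size and destroys the wall. The known route (Korhonen's grid-induced-minor theorem for bounded degree) instead constructs induced-minor branch sets that \emph{absorb} or tolerate chords, which is a genuinely different and delicate construction, not a pruning. (2) Your plan never invokes the hypothesis that $G$ is $K_{t,t}$-induced-minor-free; it only uses $K_t$-freeness and bounded degree. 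But $K_t$-freeness is essentially vacuous once $t>\Delta+1$, so a proof along your lines would show that \emph{every} graph of maximum degree $\Delta$ with no $W_{k\times k}$ induced minor has treewidth $\mathrm{poly}(k,\Delta)$ --- a polynomial-in-$\Delta$ strengthening of Korhonen's theorem, whose known proofs give bounds exponential in $\Delta$. The $K_{t,t}$ hypothesis is precisely what controls dense chord patterns between the long, pairwise anticomplete paths of the wall (compare the role it plays in \cref{bipartite lemma} of this paper, where edges between contracted components are bounded using $K_{t,t}$-induced-minor-freeness); a correct proof of the polynomial bound must use it, and your ``parallel pruning scheme'' is at present a placeholder for the theorem's actual content.
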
 
   
 Finally, we  need the following result of \cite{ChuzhoyDeg3}:
 \begin{theorem}
   \label{thm:subcubic}
   There is an integer $d_3$ such that every graph $G$ contains a subcubic subgraph of treewidth at least $\frac {tw(G)}{\log^{d_3}(tw(G))}$.
 \end{theorem}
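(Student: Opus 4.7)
The plan is to reduce the statement to a poly-logarithmic \emph{wall theorem}: every graph $G$ of treewidth $k$ contains a wall of side length $\ell = \Omega(k/\log^{c}(k))$ as a topological minor, for some absolute constant $c$. Assuming this, the realization of such a wall inside $G$ is a subdivision of the wall and hence a subgraph of $G$. Since a wall has maximum degree three and subdivision preserves maximum degree, this subgraph is subcubic; since the treewidth of an $\ell \times \ell$ wall is $\Theta(\ell)$ and treewidth is preserved under subdivision (once the original is at least two), the resulting subcubic subgraph has treewidth $\Omega(k/\log^{c}(k))$, which is exactly \cref{thm:subcubic} after adjusting $d_3$.

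The core of the proof is therefore the poly-logarithmic wall theorem itself. I would carry this out through well-linked sets and iterative routing. Starting from a tree decomposition of width $k$, one first extracts a \emph{well-linked} set $S$ with $|S| = \Omega(k)$, meaning any two equal-sized subsets of $S$ can be linked by pairwise vertex-disjoint paths. Then one builds the rungs and rails of a wall as follows: partition $S$ into two halves, invoke a Menger-type routing argument to produce many disjoint paths between them (these become the horizontal rungs), and recurse on a carefully chosen well-linked subset disjoint from the routing to produce the vertical rails. Finally, merge the two families of paths into the bipartite incidence pattern of a wall and clean up the result into an honest topological minor of maximum degree three, cutting branch vertices into subcubic gadgets where necessary.

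The main obstacle is controlling the loss at each routing step so that only a poly-logarithmic factor is incurred overall, rather than a polynomial one. Naive routing or the classical Robertson--Seymour approach through lean decompositions gives only polynomial dependencies between treewidth and the largest extractable grid/wall minor. Avoiding this requires machinery from expander decomposition theory: decomposing the host graph into expanding pieces and invoking the cut-matching game to route disjoint paths through each piece with polylogarithmic congestion, so that almost all the well-linkedness of $S$ survives the routing. A secondary difficulty is upgrading the resulting minor into a \emph{topological} minor of bounded degree, which is essential here since we want a subgraph of $G$ and not merely a minor; this is done by splitting each high-degree branch vertex into a small subcubic gadget using additional local routings inside the expanders. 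I would treat the expander-routing machinery as a black box and focus the exposition on how these routings are assembled into a wall structure, rather than on their internal construction.
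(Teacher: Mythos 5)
The paper does not prove \cref{thm:subcubic} at all; it imports it verbatim from \cite{ChuzhoyDeg3} (the degree-$3$ treewidth sparsifier theorem), so there is no internal proof to compare against. Judged on its own terms, your proposal has a fatal gap: the ``poly-logarithmic wall theorem'' you reduce to is false. A wall of side length $\ell$ contains an $\Omega(\ell)\times\Omega(\ell)$ grid minor, so your claimed lemma would imply that every graph of treewidth $k$ contains a grid minor of side $\Omega(k/\log^{c}k)$. But a bounded-degree expander on $n$ vertices has treewidth $\Theta(n)$, while any $r\times r$ grid minor in an $n$-vertex graph requires $r^{2}$ pairwise disjoint nonempty branch sets, forcing $r\le\sqrt{n}$; Robertson, Seymour and Thomas sharpened this to $r=O(\sqrt{n/\log n})$. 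Hence the largest wall one can hope to extract has side $O(\sqrt{k})$, yielding a subcubic subgraph of treewidth only $O(\sqrt{k})$ --- a polynomial, not poly-logarithmic, loss. No amount of care in controlling the routing steps can repair this, because the obstruction is a counting one (not enough vertices to host the branch sets), not a weakness of the lean-decomposition or expander-routing analysis.

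The actual proof in \cite{ChuzhoyDeg3} does use the ingredients you name --- well-linked sets, the cut-matching game, and a Path-of-Sets system --- but the target object is deliberately \emph{not} a wall or any grid-like structure. One instead assembles a maximum-degree-$3$ subgraph out of families of disjoint paths and trees whose large treewidth is certified directly by the well-linkedness of a large vertex set inside it, bypassing grid minors entirely; this is exactly why a sparsifier with only poly-logarithmic loss is achievable even though the excluded-grid trade-off is inherently polynomial. If you wish to salvage your outline, replace the wall by such a well-linkedness-certifying degree-$3$ structure and lower-bound its treewidth via the well-linked set rather than via a planar minor.
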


 \begin{proof} [Proof of \cref{thm:twbound}]
   Let $d_1$ be as in \cref{thm:starcoloring}, and let $C_1, \dots, C_{d_1}$ be
   the color classes of a star coloring of $G$. For $i,j \in \{1, \dots, d_1\}$
   let $F'_{ij}=G[C_i \cup C_j]$. Then  $F'_{ij}$ is a star forest. Let 
   $F_{ij}$ be the graph obtained from $F'_{ij}$ be removing all isolated vertices; write $E_{ij}=E(F_{ij})$. Note that $\bigcup_{i,j \in \set{1,\ldots,d_1}} E_{ij} = E(G)$; we call $\{E_{ij}\}_{i,j \in \{1, \dots,d_1\}}$
     {\em the corresponding edge partition}.
   We say that $E_{ij}$ is {\em active} in $G$
   if some vertex of $G$ is incident with more than three edges of $E_{ij}$.
   We define the {\em dimension} of $G$ to be the number of pairs $(i,j)$ for
   which $E_{ij}$ is active, and       denote the dimension  of $G$ by $dim(G)$.
    Let $p,q$ be as in \cref{thm:slim}, let
       $d_2$ be as in \cref{thm:degree}, and let $r_0=\max\set{\psi(t,q), (3td_1^2)^{d_2}}$.
    
      \sta{If $dim(G)=0$, then $tw(G) \leq r_0$.\label{dim0}}
      Since $dim(G)=0$, it follows that the maximum degree of $G$ is at most $3d_1^2$. Now \eqref{dim0} follows from \cref{thm:degree}. 
      \\
      \\
      Let $d_3$ be as in \cref{thm:subcubic}.
      Next, we recursively define a sequence of integers $r_1, \dots, r_{{d_1}^2}$.
      Having defined $r_1, \dots, r_i$, let 
$r_{i+1}=2^{9\log(r_i\log^{d_3}n) + 5\sqrt{\log(r_i\log^{d_3}(n))\log(n))}}$.
      

We will prove by induction on $dim(G)$ that $tw(G) \leq r_{dim(G)}$.
The base case is \eqref{dim0}, thus we assume that we have proved the result for  graphs of dimension at most $i$, and  that $dim(G)=i+1$.
Let $i_0,j_0 \in {1, \dots d_1}$ be such that $E_{i_0j_0}$ is active in $G$.

\sta{Let $G_0$ be an induced subgraph of $G$  with $tw(G_0)> r_i \log^{d_3}n$.
Let $G'$ be the graph obtained from $G_0$ by contracting the edges of
$F_{i_0j_0}(G_0)$. Then $tw(G') \leq r_i \log^{d_3}n$. \label{largetw}}

Suppose not. By \cref{thm:subcubic} $G'$ contains a subcubic subgraph $G''$
with $tw(G'') >  r_i$. The third and fourth paragraphs of the proof of Lemma~7
of \cite{BHKM} show how to construct an induced subgraph $H$ of $G_0$ that contains $G''$ as a minor, and such that every vertex of $H$ is incident with at
most three edges of $E_{i_0j_0}$. It follows that $tw(H) \geq tw(G'')>r_i$.

We now show that $dim(H) \leq i$ and get  contradiction.
First observe that $C_1 \cap V(H), \dots, C_{d_1^2}\cap V(H)$ is a star coloring of $H$ with corresponding edge partition
$\{E_{pq} \cap E(H))\}_{p,q \in \{1, \dots, d_1\}}$.
It follows that the set $E_{i_0j_0} \cap V(H)$ is not active in $H$.
Moreover, if the set $E_{pq}$ is not active in $G$, then the set $E_{pq} \cap V(H)$ is not active in $H$. Since $E_{i_0j_0}$ is active in $G$, we deduce that $dim(H)<dim(G)=i+1$,
a contradiction. This proves~\eqref{largetw}.
\\
\\
\sta{$G$ is $(F_{i_0j_0}, r_i\log^{d_3}n)$-based. \label{based}}

Let $G_0$ be an induced subgraph of $G$. We need to show that $G_0$
admits an $(F_{i_0j_0}, r_i\log^{d_3}n)$-based tree decomposition.
Let $G'$ be the graph obtained from $G_0$ by contracting the edges of
$F_{i_0j_0}(G_0)$. 
Then every vertex of $G'$ is either a vertex of $G$ or corresponds to a
component of $F_{i_0j_0}(G_0)$; we call the latter kind of vertex a
{\em star vertex}.
Let $(T,\chi')$ be a tree decomposition of $G'$ of width $tw(G')$.
We construct a tree decompositon $(T, \chi_0)$ of $G_0$, where
for every $t \in T$, $\chi_0(t)$ is obtained from $\chi'(t)$
by replacing every star
vertex with  the component of $G(F_{i_0j_0})$ to which it corresponds.
Since we know by \eqref{largetw} that $tw(G') \leq r_i \log^{d_3}n$, it follows  that $(T_0, \chi_0)$ is an $(F_{i_0j_0}, r_i\log^{d_3}n)$-based tree decomposition of $G_0$, and \eqref{based} follows.
\\
\\
Now, in view of \eqref{based},  \cref{thm:specialtwbound} implies that $tw(G) \leq r_{i+1}$, as required. This completes the inductive proof that $tw(G) \leq r_{dim(G)}$.

We now bound $r_i$.
Let $c_0 = r_0 + 18d_3$.
Now we show that $r_i \leq 2^{16^i{c_0}\log^{1-1/2^i}n}$.
It is enough to prove that  $\log r_i \leq 16^i{c_0}\log^{1-1/2^i}n$.
To do so, we proceed by induction.
The base case trivially holds as $c_0\geq r_0$.
For $i \geq 1$, let $c_i=16^i c_0$ and $\epsilon_i = \frac{1}{2^i}$. 
By the induction hypothesis, we have that
\begin{align*}
   \log  r_{i+1}&\leq 9\log(2^{c_i\log^{(1-\epsilon_i)}(n)}   \log^{d_3}n) + 5\sqrt{\left(c_i\log^{(1-\epsilon_i)}(n) + \log(\log^{d_3}(n))\right)\log(n))}\\
    &\leq 9c_i\log^{(1-\epsilon_i)}(n)  + 9 d_3\log\log n + 5\sqrt{c_i}\log^{(1-\epsilon_i/2)}(n) + 5\sqrt{d_3\log\log(n)\log(n))}\\
    &\leq 14 c_i \log^{1-\epsilon_i/2}(n) +  9 d_3\log\log n + 5\sqrt{d_3\log\log(n)\log(n)}\\
    &\leq 14 c_i \log^{1-\epsilon_i/2}(n) +  c_i \log^{1-\epsilon_i/2}(n)  +  c_i \log^{1-\epsilon_i/2}(n) \\
    &\leq 16 c_i \log^{1-\epsilon_i/2}(n) 
\end{align*}
Setting $c=c_{d_1^2}$ and $\epsilon = \epsilon_{d_1^2}$ completes the proof.

\end{proof}

\section{Acknowledgments}
We are grateful to Sepehr Hajebi and Sophie Spirkl for many inspiring discussions.

\bibliographystyle{abbrv}
\bibliography{ref}


\end{document}